\documentclass[11pt,reqno]{amsart}
\usepackage[T1]{fontenc}
\usepackage{lmodern}
\usepackage{amsmath,amssymb,amsthm,mathtools}
\usepackage{geometry}
\usepackage{booktabs,longtable,array}
\usepackage[expansion=false]{microtype}
\usepackage[hidelinks]{hyperref}
\usepackage{url}
\urldef{\certificaterepository}\url{https://github.com/MColbrook/Kenig_spectral_radius}
\usepackage{tikz}
\usepackage{placeins}
\numberwithin{equation}{section}
\newtheorem{theorem}{Theorem}[section]
\newtheorem{proposition}[theorem]{Proposition}
\newtheorem{lemma}[theorem]{Lemma}

\theoremstyle{definition}

\theoremstyle{remark}
\newtheorem{remark}[theorem]{Remark}
\newcommand{\R}{\mathbb R}
\newcommand{\C}{\mathbb C}
\newcommand{\T}{\mathbb T}

\newcommand{\Z}{\mathbb Z}
\newcommand{\dd}{\,\mathrm d}
\newcommand{\ii}{\mathrm i}
\newcommand{\pv}{\operatorname{pv}}
\newcommand{\essspec}{\sigma_{\mathrm{ess}}}
\newcommand{\Lip}{\operatorname{Lip}}
\newcommand{\HS}{\mathrm{HS}}

\newcommand{\supp}{\operatorname{supp}}
\newcommand{\osc}{\operatorname{osc}}

\newcommand{\sgn}{\operatorname{sgn}}
\newcommand{\Herm}{\operatorname{Herm}}
\newcommand{\norm}[1]{\left\lVert#1\right\rVert}
\newcommand{\abs}[1]{\left\lvert#1\right\rvert}
\newcommand{\one}{\mathbf 1}
\newcommand{\cH}{\mathcal H}
\newcommand{\cE}{\mathcal E}
\newcommand{\cB}{\mathcal B}

\allowdisplaybreaks[1]
\title[Kenig's spectral-radius conjecture]{A counterexample to Kenig's conjecture for the Laplace double-layer operator}
\author{Matthew J. Colbrook}
\address{Department of Applied Mathematics and Theoretical Physics,
University of Cambridge, Wilberforce Road, Cambridge CB3 0WA,
United Kingdom}
\email{m.colbrook@damtp.cam.ac.uk}
\author{Siavash Sadeghi}
\address{Department of Mathematics and Statistics, Mathematics Building,
University of Reading, Whiteknights campus, Reading RG6 6AX,
United Kingdom}
\email{s.sadeghi@pgr.reading.ac.uk}
\date{September 7, 2026}
\subjclass[2020]{Primary 31A10, 47A10; Secondary 42B20, 65G30}
\keywords{Laplace's equation, layer potentials, Lipschitz domains, boundary integral equations, singular integrals, essential spectrum, computer-assisted proof}

\begin{document}
\begin{abstract}
Layer potentials provide a classical approach to boundary value problems for Laplace's equation on Lipschitz domains. Kenig's 1994 spectral-radius conjecture for the double-layer operator would ensure operator-norm convergence of the associated Neumann series on mean-zero $L^2$ densities when the boundary is connected. We disprove this conjecture by constructing a bounded simply connected planar Lipschitz domain whose double-layer operator on arclength $L^2$ has essential spectral radius strictly greater than $1/2$. More precisely, for every $t>1/2$ sufficiently close to $1/2$, we obtain such a domain with $\pm\ii t$ in its Fredholm essential spectrum. The construction starts from smooth graphs whose shapes repeat under translation. In the limit of separated scales, refinement makes solutions of adjoint resolvent equations grow with fixed forcing. The graph slopes remain uniformly bounded. A computer-assisted certificate proves this growth through an inequality for Hermitian $2\times2$ matrices. Its strict margin at $-\ii/2$ persists at nearby spectral parameters. Normalisation and a Floquet transform then give compactly supported densities with small residuals on the full graphs. We insert rescaled segments of successive graphs into one bounded boundary, where these densities form a weakly null sequence of approximate eigenvectors. The same spectral conclusion holds on a single periodic Lipschitz graph. The certificate combines continuous estimates, exact rational arithmetic and rigorous interval enclosures.
\end{abstract}
\maketitle
\vspace{-\baselineskip}
\vspace{-\baselineskip}
{\footnotesize\setlength{\baselineskip}{10pt}\tableofcontents}

\section{Introduction}\label{sec:introduction}

Kenig formulated his spectral-radius conjecture in 1994
\cite[Problem~3.2.12]{Kenig}, in the \(L^2\) theory of layer potentials
on Lipschitz domains. It proposes a strict spectral bound that would
imply convergence of the classical Neumann series for the associated
boundary integral equations. We give a counterexample in the plane,
resolving the conjecture negatively more than three decades after its
formulation.

Let \(\Omega\subset\R^d\) be a bounded Lipschitz domain and let
\(\Gamma=\partial\Omega\), with outward unit normal \(n\). We use the
normalisation
\begin{equation}\label{eq:closed-double-layer}
 D_\Gamma f(X)=\frac{1}{|S^{d-1}|}\pv\int_\Gamma
       \frac{(X-Y)\cdot n(Y)}{|X-Y|^d}f(Y)\dd\sigma(Y).
\end{equation}
The double-layer operator and its adjoint are central to classical
potential theory, reducing the Dirichlet and Neumann problems for
Laplace's equation to integral equations for densities on the boundary
\cite{Verchota}. These formulations also provide the basis for boundary
element methods \cite[Section~1]{CWS}.

Throughout, spectra are taken on complex Hilbert spaces, and
\(\essspec(T)\) denotes the Fredholm essential spectrum:
\(\lambda\in\essspec(T)\) precisely when \(T-\lambda I\) is not
Fredholm. Equivalently, this is the spectrum of the image of \(T\)
in the Calkin algebra, the quotient of the bounded operators by the
compact operators.
For a bounded operator \(T\) on a Hilbert space \(H\), its spectral radius
and essential spectral radius are denoted by \(r(T;H)\) and
\(r_{\mathrm{ess}}(T;H)\), respectively. If \(H\) is a closed invariant
subspace, these expressions refer to the restriction of \(T\) to
\(H\). We omit the space when it is clear from the context.

The subspace
\[
 L^2_0(\Gamma)=\left\{f\in L^2(\Gamma,\dd\sigma):
                  \int_\Gamma f\dd\sigma=0\right\}
\]
is invariant under \(D_\Gamma^*\). For connected \(\Gamma\), the
conjecture asserts that \(r(D_\Gamma^*;L^2_0(\Gamma))<1/2\).
Its essential-spectrum formulation
\cite{CHPV} asks whether
\begin{equation}\label{eq:conjecture}
 r_{\mathrm{ess}}(D_\Gamma;L^2(\Gamma,\dd\sigma))<\frac12
\end{equation}
for every bounded Lipschitz domain. The strict gap in
\eqref{eq:conjecture} is allowed to depend on the domain.
When \(\Gamma\) is connected, the two formulations are equivalent
by \cite[Lemma~2.1]{CHPV}.

\begin{theorem}\label{thm:main}
There exists \(\varepsilon>0\) such that, for every
\(t\in[1/2,1/2+\varepsilon)\), there is a bounded simply connected
Lipschitz domain \(\Omega_t\subset\R^2\) whose boundary
\(\Gamma_t=\partial\Omega_t\) satisfies
\begin{equation}\label{eq:main-essential-point}
 \pm\ii t\in\essspec(D_{\Gamma_t};L^2(\Gamma_t,\dd\sigma)).
\end{equation}
In particular,
\[
 r_{\mathrm{ess}}(D_{\Gamma_t};L^2(\Gamma_t,\dd\sigma))\ge t,
 \qquad
 r(D_{\Gamma_t}^*;L^2_0(\Gamma_t))\ge t.
\]
\end{theorem}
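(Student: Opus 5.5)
The plan follows the route sketched in the abstract. We build a Weyl-type singular sequence for $D_{\Gamma_t}-\ii t$, or for its adjoint, on a bounded curve assembled from pieces of periodic Lipschitz graphs. The conjugate point $-\ii t$ then comes for free: $D_{\Gamma}$ has a real kernel, so $\essspec(D_\Gamma)$ is invariant under complex conjugation. Since $\lambda\in\essspec(T)$ if and only if $\bar\lambda\in\essspec(T^*)$, we may work with $D^*_\Gamma$ throughout. The two displayed inequalities then follow. The bound $r_{\mathrm{ess}}\ge t$ is immediate. The bound on $r(D_{\Gamma_t}^*;L^2_0)$ follows because $L^2_0$ has codimension one, so its spectrum contains the essential spectrum. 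Alternatively, one can invoke \cite[Lemma~2.1]{CHPV}, since $\Gamma_t$ is connected.

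\textbf{Step 1: periodic graphs and a scale-separated limit.} Consider graphs $y=\phi(x)$ with $\phi'$ periodic, and write $D^*$ in graph coordinates. After a Floquet--Bloch transform in $x$, $D^*$ on a periodic graph decomposes into a family of operators on one period. For a sequence of graphs $\phi_k$ obtained by iterated refinement (a fixed profile superimposed at scales $\delta^{k}$, with $\delta\to0$), we pass to the limit of separated scales. In this limit the resolvent equation $(D^*_{\phi_k}-\lambda)u=g$ with fixed forcing $g$ reduces to a recursion: each refinement level acts on a two-dimensional space of local data (the two one-sided behaviours at a corner-like feature) through a $2\times2$ transfer structure. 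The key claim is that at $\lambda=-\ii/2$ this recursion is expanding, so the solution norms grow without bound in $k$, while $\sup_k\|\phi_k'\|_\infty$ stays uniformly bounded.

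\textbf{Step 2: the certificate, which is the main obstacle.} Growth will follow from a strict inequality of the form $M(\lambda)^*HM(\lambda)-H\succ0$ for some Hermitian $2\times2$ matrix $H$. Here $M(\lambda)$ is the $2\times2$ limiting transfer matrix, whose entries are integrals of explicit kernels along the profile. This step I expect to be hardest. The entries are not closed form, so they must be enclosed rigorously: continuous truncation and tail estimates for the integrals, interval arithmetic for the quadratures, and exact rational arithmetic for the final positivity check via trace and determinant of a $2\times2$ matrix. The quantitative error between the finite-$\delta$ operators and the limit must also be controlled. Because the inequality is strict at $-\ii/2$, continuity in $\lambda$ gives it on a neighbourhood, which yields $\varepsilon$ and covers all $\lambda=-\ii t$ with $t\in[1/2,1/2+\varepsilon)$.

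\textbf{Step 3: approximate eigenvectors and gluing.} Unbounded growth of resolvent solutions with bounded forcing means that $D^*_{\phi_k}-\lambda$ is not uniformly boundedly invertible. Normalising the solutions $u_k$ gives $\|(D^*_{\phi_k}-\lambda)u_k/\|u_k\|\|\to0$. A Floquet transform and a smooth cutoff over many periods make these densities compactly supported with small residuals. The cutoff error is controlled by the $L^2$-boundedness of the double layer on Lipschitz graphs (Coifman--McIntosh--Meyer), with constants uniform in $k$ because the slopes are uniformly bounded. Finally, rescaled segments of the graphs $\phi_k$ are inserted into disjoint small arcs of one bounded simply connected Lipschitz curve $\Gamma_t$, with the arcs accumulating at a point. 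Dilation invariance of $D$ makes the rescaled residuals unchanged. The interaction between different arcs, and with the rest of the curve, is handled by off-diagonal kernel decay together with the separation of the arcs, so these terms are small. The transplanted densities are then normalised, have shrinking disjoint supports, and are therefore weakly null. This is a singular Weyl sequence, so $\ii t\in\essspec(D_{\Gamma_t})$. The analogous argument without gluing gives the single periodic graph statement.
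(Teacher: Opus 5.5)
Your outer layers are sound and match the paper: conjugation symmetry of the real kernel, the codimension-one argument for $L^2_0$, Floquet plus truncation, and insertion of rescaled segments accumulating at a point. The core of Steps~1--2 has a genuine gap, however.

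\textbf{The slope bound fails.} Superimposing a fixed periodic profile at every scale gives $\phi_k'=\sum_{j=0}^{k}h'(x/\delta^j)$; tilts only add further terms. For $\delta=1/N$ with $N$ large, a nested-interval argument produces points where every summand exceeds $\tfrac12\max h'>0$. So $\norm{\phi_k'}_\infty$ grows linearly in $k$. The Coifman--McIntosh--Meyer constants are then not uniform, and the depth-independent bound on the auxiliary operator is lost; that bound is what turns growth of the pair into growth of the density. The glued curve is also not Lipschitz. The missing idea is the paper's stopping rule. There are two cells $F_{\pm1/2}$, each containing a straight interval $J$ whose slope $\mp1/2$ equals the other cell's tilt. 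The other cell is inserted only on $J$, and refinement stops elsewhere. The leading slope is then a convex combination of $\pm1/2$ and a cell slope, hence at most $7/2$. The cross-derivative terms are at most $1$ once the period ratios are large, giving $9/2$ at every depth.

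\textbf{The recursion and certificate you describe are not the ones that arise.} In the separated-scale limit the refined operator is $\mathcal P+UP_{F_0}W+UR_{F_0}V$, with $Vg=W(bg)$. The fine slope oscillation thus couples the coarse density to the auxiliary singular integral $R$, and the density alone satisfies no closed recursion. The two-dimensional data is the pair $(u,Ru)$ at each coarse point, not one-sided behaviour at a corner; the cells are smooth. The update is a matrix-valued function $M_m(v)$ of the fine coordinate. Its entries are the cell resolvent applied to $P_m^*1$ and to $b_m=F_m'-m$, so the certificate needs an inverse bound for the cell operator, not just quadrature of explicit kernels; the paper gets it from $\norm{P^2}<1/4$.

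Because the norms are $L^2$ norms on a product torus, growth is measured by the positive maps $K_m(X)=\int_J M_m^*XM_m\dd v$. These integrate over the refined set only and are composed over the alternating tilts. The witness must satisfy $\tfrac14I\le X\le I$ and $K_{1/2}K_{-1/2}(X)\ge\tau X$ for some $\tau>1$. Your single-matrix inequality $M^*HM-H>0$ with arbitrary Hermitian $H$ does not match this structure, and it does not imply norm growth: $H=-I$, $M=\tfrac12I$ satisfies it.

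\textbf{Smaller points.} No quantitative finite-$\delta$ error is needed. Growth is exact for the limit operators, and each fixed depth is realised by strong convergence of the unfolded operators and their adjoints, with periods chosen from coarse to fine. The Floquet step uses that the rank-one jump at zero phase vanishes on mean-zero densities. In the gluing, the residual off a retained segment is controlled by a bound of the form $R^{-1/2}\norm{\varphi}_{L^1}$, where $R$ is the segment's length relative to the density's support. It is this extra length, not separation between arcs, that must be chosen large.
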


Thus the bound \(r_{\mathrm{ess}}\le1/2\) also fails. The proof starts
from smooth graphs whose shapes repeat under translation. We refine
these graphs so that solutions of resolvent equations become large
compared with their right-hand sides. Normalising the solutions gives
densities with small residuals. Rescaled portions of successive graphs
then form the boundary of one bounded domain. The two basic graph
profiles are the same for every \(t\); the refinement scales and the
resulting domain may depend on \(t\).
For each fixed \(t\), the domain in Theorem~\ref{thm:main} may be
chosen in the form
\[
 \Omega_t=\{(x,y):-1<x<1,\ -H<y<f(x)\},
 \qquad H>1+\norm f_\infty,
\]
where \(f:[-1,1]\to\R\) is Lipschitz, with
\(\Lip(f)\le27/2\) and \(f(0)=f'(0)=0\).
Figure~\ref{fig:domain} illustrates the construction. The function
\(f\) vanishes outside disjoint intervals accumulating only at
\(0\). On these intervals, its graph consists of rescaled segments
of smooth periodic graphs, cut off near their endpoints. Thus the boundary
has a horizontal tangent at the origin, although finer oscillations
occur arbitrarily close to it. The other three sides of the rectangle
remain unchanged; see Section~\ref{sec:gluing}.

The significance of the strict spectral bound is its relation to
Neumann-series inversion. For connected \(\Gamma\), let
\(B=D_\Gamma^*|_{L^2_0(\Gamma)}\). Verchota's \(L^2\) layer-potential
theory gives invertibility of \(B\pm\tfrac12I\)
\cite{Verchota}; see also \cite[Section~2]{CHPV}. The stronger condition
\(r(B)<1/2\) is equivalent to operator-norm convergence of the series
\[
 (B+\tfrac12I)^{-1}=2\sum_{n=0}^{\infty}(-2B)^n,
 \qquad
 (B-\tfrac12I)^{-1}=-2\sum_{n=0}^{\infty}(2B)^n.
\]
This connection motivates the conjecture; see
\cite[Section~1.1, footnote~4]{CHPV}. On the domain in
Theorem~\ref{thm:main}, \(r(B)\ge t\), so
\(\|(\pm2B)^n\|\ge(2t)^n\) for every \(n\). Thus neither series converges
in operator norm, although both boundary integral operators are
invertible.

The conjecture holds for several broad classes of domains.
For \(C^1\) boundaries, compactness of the double-layer operator
gives \(r_{\mathrm{ess}}(D_\Gamma;L^2(\Gamma,\dd\sigma))=0\)
\cite{FJR1978}. Fabes, Sand and Seo \cite{FSS1992} established
the conjecture for convex domains, and I.~Mitrea \cite{Mitrea1999}
proved it for domains with sufficiently small Lipschitz character.
Compactness also holds for Lipschitz boundaries whose unit normal
has vanishing mean oscillation \cite{HMT2010}.

Positive results also extend to boundaries with corners.
Shelepov's spectral calculation \cite{Shelepov1991} gives
\eqref{eq:conjecture} for planar domains whose boundaries have
bounded tangent variation and no cusps, including polygons.
Elschner \cite{Elschner1992} established the
conjecture for Lipschitz polyhedra in three dimensions.
Chandler-Wilde, Hagger, Perfekt and Virtanen
\cite[Corollary~5.7]{CHPV} extended this result to Lipschitz
curvilinear polyhedra. Their localisation and Floquet analysis also
treats locally dilation-invariant boundaries, and their numerical
examples give further evidence for the conjecture on oscillatory
boundaries. These computations are supported by error estimates
with explicit constants \cite[Section~6]{CHPV}.

Ruiz \cite{Ruiz2026} studies related spectral questions on
self-similar chains of smooth domains accumulating at a point,
whose union has non-Lipschitz boundary. His block-Toeplitz
representation yields an explicit spectral calculation for
concentric annuli. On arclength \(L^2\) densities with zero integral
over the boundary of each annulus, an eigenvalue of modulus \(1/2\)
has infinite multiplicity \cite[Section~8.3]{Ruiz2026}.

Chandler-Wilde and Spence \cite{CWS} construct Lipschitz domains
for which the double-layer operator has arbitrarily large essential
norm and essential numerical radius. Such lower bounds do not,
by themselves, determine the essential spectral radius.
Their construction also inserts rescaled graph segments near an
accumulation point \cite[Figure~1 and Definition~4.7]{CWS}.

For elliptic systems, Mitrea and Tucker \cite{MitreaTucker2003}
constructed counterexamples to the corresponding spectral-radius
conjecture on polygonal domains. Their proof for \(p=2\) uses
validated interval computations. Theorem~\ref{thm:main}
concerns the scalar Laplace operator on arclength \(L^2\). The constructed
boundary contains increasingly many separated refinement scales while
remaining in one fixed Lipschitz class.

We now describe the construction. Fix \(t\) in the interval of
Theorem~\ref{thm:main}, and let \(z=-\ii t\). We work first with
graphs of the form
\[
 \Gamma_F=\{(x,F(x)):x\in\R\},\qquad
 F(x)=mx+h(x),\qquad h(x+1)=h(x).
\]
Such a graph is invariant under translation by \((1,m)\). We call
one horizontal period a \emph{cell}, and \(m\) its \emph{tilt}.
Let \(P_F\) be the double-layer operator on period-one densities,
in horizontal coordinates with measure \(\dd x\). Its kernel is
obtained by symmetrically summing the contributions from all translated
periods; Section~\ref{subsec:periodization} gives the precise definition.
Our first objective is a sequence of smooth profiles \(F_q\), with
\(\norm{F_q'}_\infty\le9/2\), and mean-zero unit densities \(u_q\)
on one period such that
\[
 \norm{(P_{F_q}^*-z)u_q}_2\longrightarrow0.
\]
The profiles change with \(q\), with more refinements required for
a smaller residual.

The geometry uses two fixed cells with tilts \(m=\pm1/2\).
Each contains a straight subinterval \(J\) of slope \(-m\).
On this subinterval we insert small copies of the other cell, whose
tilt matches the segment being replaced. We repeat the insertion only
on the corresponding straight parts of the new copies. Elsewhere the
refinement stops. This restriction keeps the slopes bounded independently
of the number of refinements.

To analyse an insertion, we let its period tend to zero while keeping
the preceding graph fixed. The limit has separate coordinates for
the position on the preceding graph and the position within the new
cell. Iterating gives an operator \(D^{(j)}\) on a product of
\(j+1\) periods, one for each scale. We solve
\(((D^{(j)})^*-z)u_j=\eta\), where the forcing \(\eta\) is fixed and
extended constantly in each new coordinate. The central estimate is
that \(\norm{u_j}_2\) grows with \(j\). Indeed,
\[
 \frac{\norm{((D^{(j)})^*-z)u_j}_2}{\norm{u_j}_2}
       =\frac{\norm{\eta}_2}{\norm{u_j}_2},
\]
so growth gives a small residual after normalisation.

A refinement couples the density \(u\) to an auxiliary singular
integral \(Ru\). For a graph \(F\), the operator \(R_F\) is obtained
by symmetrically periodising the kernel
\[
 -\frac{x-y}{2\pi\bigl((x-y)^2+(F(x)-F(y))^2\bigr)}.
\]
On the parts where refinement continues, keeping both components
gives the exact update
\[
 \binom{u_{\mathrm{new}}}{R_{\mathrm{new}}u_{\mathrm{new}}}
       =M_m(v)\binom{u_{\mathrm{old}}}{R_{\mathrm{old}}u_{\mathrm{old}}},
\]
where \(v\) is the new cell coordinate. The entries of \(M_m\)
are determined by the two local resolvents; the density alone does
not satisfy a closed recursion.

The matrix estimate measures growth by a positive quadratic form.
For a Hermitian \(2\times2\) matrix \(X\), define
\[
 K_m(X)=\int_J M_m(v)^*XM_m(v)\dd v,\qquad A=K_{1/2}K_{-1/2}.
\]
These maps preserve positive semidefinite matrices; \(X>Y\) means
that \(X-Y\) is positive definite. At \(z=-\ii/2\),
the certificate proves \(A(X)>(21/20)X\) for one explicit positive
definite \(X\). Thus the integral of the quadratic form in
\((u,Ru)\) increases by a factor greater than one after two
refinements, even when restricted to the parts that are refined
again. The uniform bound for \(R\) then forces the density norm
to grow. The strict margin persists at nearby \(z=-\ii t\).
Strong convergence of the operators and their adjoints at each fixed
depth transfers the normalised densities and their small residuals
from the product space to smooth periodic graphs.

The Floquet transform next passes from one period to the full graph.
It decomposes the full-line operator into operators on one period,
with phase \(\theta\) corresponding to
\(u(x+1)=e^{\ii\theta}u(x)\). On the mean-zero densities above,
the adjoint operators at small nonzero phases approach \(P_F^*\).
Combining a short interval of such phases and then truncating gives
compactly supported unit densities on the full graphs with residuals
tending to zero; see Lemma~\ref{lem:full-line-packets}.

Finally, take a sufficiently long segment from each successive graph,
containing the support of its density. The extra length controls the
operator's action far from that support. We scale these segments to
fit on disjoint intervals accumulating at the origin, tapering their
outer ends to the horizontal axis as in Figure~\ref{fig:domain}.
Repeating the resulting function periodically gives a single Lipschitz
graph whose double-layer operator has \(\pm\ii t\) in its essential spectrum
(Theorem~\ref{thm:periodic-graph}). The same inserted segments form
the top of a bounded domain, giving Theorem~\ref{thm:main}; the scales can be chosen so
that the sides and bottom contribute a residual tending to zero.
On the bounded boundary \(\Gamma\), the transferred densities satisfy
\begin{equation}\label{eq:desired-sequence}
 \norm{\psi_q}_2=1,\qquad \psi_q\rightharpoonup0,\qquad
 \norm{(D_\Gamma^*+\ii t)\psi_q}_2\longrightarrow0.
\end{equation}
Weak convergence to zero follows from their disjoint supports.
Such a sequence prevents \(D_\Gamma^*+\ii t\) from being Fredholm.
Taking adjoints and using the real kernel gives both signs in
\eqref{eq:main-essential-point}. Proposition~\ref{prop:essential-symmetry}
also proves origin symmetry of the essential spectrum on a bounded
simply connected planar Lipschitz domain.

\begin{figure}[!htbp]
\centering
\begin{tikzpicture}[x=1cm,y=1cm,font=\footnotesize,
  line cap=round,line join=round]
\pgfmathdeclarefunction{domaincurve}{2}{%
  \pgfmathparse{min(1,max(0,2-2*abs(#1)))*
    (-0.38*#1+0.085*sin((3+2*#2)*180*#1)
      +0.012*sin((10+6*#2)*180*#1))}}
\tikzset{retained/.style={black,line width=.8pt},
  tapered/.style={black!50,line width=.65pt},
  boundary/.style={black,line width=.65pt}}
\path[use as bounding box] (0,0) rectangle (14.8,3.8);
\node[font=\small] at (3.35,3.55) {(a) Bounded domain};
\node[font=\small] at (11.1,3.55) {(b) Enlargement near $0$};

% Overall domain; horizontal separations and heights are exaggerated.
\draw[boundary,fill=black!5] (.30,.25) rectangle (6.40,2.55);
\foreach \q in {6,5,4,3,2,1} {
  \pgfmathsetmacro{\cc}{3.35+1.90/pow(2,\q-1)}
  \pgfmathsetmacro{\rr}{(\cc-3.35)*.32/(\q+1)}
  \fill[white] ({\cc-\rr},1.95) rectangle ({\cc+\rr},3.20);
  \fill[black!5] ({\cc-\rr},1.95)
    -- plot[domain=-1:1,samples=141,variable=\t]
      ({\cc+\rr*\t},{2.55+6*\rr*domaincurve(\t,\q)})
    -- ({\cc+\rr},1.95) -- cycle;
  \draw[boundary] plot[domain=-1:1,samples=141,variable=\t]
      ({\cc+\rr*\t},{2.55+6*\rr*domaincurve(\t,\q)});
}
\node[font=\Large] at (3.35,1.3) {$\Omega$};
\node at (1.15,2.85) {$\Gamma$};
\draw[black!45,densely dashed,line width=.4pt]
  (3.18,2.05) rectangle (5.72,3.17);
\fill (3.35,2.55) circle (1.1pt);
\node at (3.00,3.03) {$0$};
\draw[black!55,line width=.35pt] (3.08,2.96) -- (3.32,2.61);
\draw[black!40,densely dashed,line width=.4pt,->]
  (5.72,3.17) -- (6.35,3.17) -- (7.35,2.85);

% Enlarged graph. The relative widths decrease towards the origin.
\draw[black!25,densely dotted,line width=.35pt] (7.40,2.05) -- (14.70,2.05);
\draw[boundary] (7.40,2.05) -- (7.70,2.05);
\fill (7.70,2.05) circle (1.1pt);
\node[below,inner sep=3pt] at (7.70,2.05) {$0$};
\node[inner sep=0pt] at (7.88,2.05) {$\cdots$};
\foreach \q in {5,4,3,2,1} {
  \pgfmathsetmacro{\cc}{7.70+5.6/pow(2,\q-1)}
  \pgfmathsetmacro{\rr}{(\cc-7.70)*.32/(\q+1)}
  \ifnum\q=5
    \draw[boundary] (8.00,2.05) -- ({\cc-\rr},2.05);
  \fi
  \draw[tapered] plot[domain=-1:-.5,samples=61,variable=\t]
    ({\cc+\rr*\t},{2.05+2.4*\rr*domaincurve(\t,\q)});
  \draw[retained] plot[domain=-.5:.5,samples=141,variable=\t]
    ({\cc+\rr*\t},{2.05+2.4*\rr*domaincurve(\t,\q)});
  \draw[tapered] plot[domain=.5:1,samples=61,variable=\t]
    ({\cc+\rr*\t},{2.05+2.4*\rr*domaincurve(\t,\q)});
  \ifnum\q>1
    \pgfmathsetmacro{\cn}{7.70+5.6/pow(2,\q-2)}
    \pgfmathsetmacro{\rn}{(\cn-7.70)*.32/\q}
    \draw[boundary] ({\cc+\rr},2.05) -- ({\cn-\rn},2.05);
  \else
    \draw[boundary] ({\cc+\rr},2.05) -- (14.70,2.05);
  \fi
}
\draw[retained] (7.85,.70) -- (8.40,.70);
\node[anchor=west,inner sep=3pt] at (8.40,.70) {retained segment};
\draw[tapered] (11.65,.70) -- (12.20,.70);
\node[anchor=west,inner sep=3pt] at (12.20,.70) {tapered portions};
\end{tikzpicture}
\caption{Schematic of the domain in Theorem~\ref{thm:main} and an
enlargement of its upper boundary. The disjoint graph insertions
accumulate at $0$, where the boundary has a horizontal tangent.
Each central segment is retained; the outer portions taper to the
flat boundary. Only finitely many insertions are shown, with
exaggerated horizontal separations, widths and heights.
See Section~\ref{sec:gluing} for the construction.}
\label{fig:domain}
\end{figure}

Sections \ref{sec:operators}--\ref{sec:finite-depth} establish the
operator and approximation lemmas. Section \ref{sec:criterion}
uses the positive-map inequality to construct these approximate null vectors.
Section \ref{sec:local-data} constructs the two fixed smooth cells
from the certified polynomial data and establishes stability in
the spectral parameter.
Sections \ref{sec:packets}--\ref{sec:gluing} complete the passage
to a single bounded boundary.
Appendix \ref{cert:appendix} gives the numerical certificate in
detail, including the exact input polynomials, interval enclosures
and residual estimates.

\FloatBarrier

\section{Graph operators and periodic cells}\label{sec:operators}

We begin with the graph operators that enter the construction.
Horizontal coordinates give a common, unweighted space on which to
compare different graphs. We also derive the conversion to arclength.

\subsection{Horizontal coordinates and normalisation}
For a Lipschitz function \(F:\R\to\R\), set
\[
 \Gamma_F=\{(x,F(x)):x\in\R\},\qquad
 s_F=F',\qquad a_F=(1+s_F^2)^{1/2}.
\]
We orient \(\Gamma_F\) upwards, so it bounds the lower graph domain.
For \(r=x-y\) and \(\Delta=F(x)-F(y)\), define the kernels
\begin{align}
 p_F(x,y)&=\frac{\Delta-rs_F(y)}{2\pi(r^2+\Delta^2)},\label{eq:graph-p}\\
 r_F(x,y)&=-\frac{r}{2\pi(r^2+\Delta^2)},&
 t_F(x,y)&=-\frac{\Delta}{2\pi(r^2+\Delta^2)}.\label{eq:graph-rt}
\end{align}
Their principal-value operators on \(L^2(\R,\dd x)\) are denoted
by \(\mathsf P_F,\mathsf R_F,\mathsf T_F\).
The auxiliary singular integral \(\mathsf R_F\) enters the
coupling created when a new cell is added. The operator
\(\mathsf T_F\) will allow us to recover the adjoint from the
same limits. We reserve \(P_F,R_F,T_F\), without the sans-serif
font, for period-one operators.

The Cauchy singular-integral theorem \cite{CMM}, in its quantitative
graph form, implies that for each \(L<\infty\) there are finite
constants \(C_P(L),C_R(L),C_T(L)\) such that
\begin{equation}\label{eq:uniform-line-bounds}
 \norm{\mathsf P_F}\le C_P(L),\quad
 \norm{\mathsf R_F}\le C_R(L),\quad
 \norm{\mathsf T_F}\le C_T(L)
 \qquad\text{if }\norm{F'}_\infty\le L.
\end{equation}
To obtain these bounds, remove the bounded invertible source
tangent from the graph Cauchy operator. Its kernel becomes
\((r+\ii\Delta)^{-1}\), whose real and imaginary parts give
\eqref{eq:graph-rt}. Bounded multiplication in the source variable
then gives \eqref{eq:graph-p}. The change between \(\dd x\) and
arclength has uniformly bounded condition number when the slope
is bounded. The same bounds follow from \cite[Consequence~3.2]{AKM}.

We take every adjoint of a horizontal operator in the unweighted
space \(L^2(\dd x)\). The antisymmetry of the two kernels in
\eqref{eq:graph-rt} gives
\begin{equation}\label{eq:joint-identities}
 \mathsf R_F^*=-\mathsf R_F,\quad
 \mathsf T_F^*=-\mathsf T_F,\quad
 \mathsf P_F=\mathsf R_F M_{s_F}-\mathsf T_F,\quad
 \mathsf P_F^*=-M_{s_F}\mathsf R_F+\mathsf T_F.
\end{equation}
Here \(M_g\) denotes multiplication by \(g\).
For smooth graphs, horizontal and Euclidean-distance principal
values agree. Indeed, the two Euclidean exclusion endpoints are
locally \(\varepsilon/a_F(x)+O(\varepsilon^2)\), and the resulting
change in the leading odd \(1/r\) integral tends to zero.
For general Lipschitz graphs we use the bounded principal-value
realisation of these operators.

To pass between these coordinates and the boundary, let
\(\rho\) be a physical density on \(\Gamma_F\), and define
\(\rho^\flat(x)=\rho(x,F(x))\). Cancelling the source Jacobian
against the unit normal in \eqref{eq:closed-double-layer} shows
that the direct operator is \(\mathsf P_F\) in these coordinates.
Its physical adjoint therefore satisfies
\begin{equation}\label{eq:physical-adjoint}
 (D_{\Gamma_F}^*(u/a_F))^\flat=a_F^{-1}\mathsf P_F^*u,\qquad
 \norm{u/a_F}_{L^2(\Gamma_F)}^2=\int_\R\frac{|u(x)|^2}{a_F(x)}\dd x.
\end{equation}
Here \(u/a_F\) denotes the density whose value at \((x,F(x))\)
is \(u(x)/a_F(x)\).
Thus \eqref{eq:physical-adjoint} transfers both an adjoint residual
and its norm to the boundary, with norm comparisons depending
only on \(L\).

We also fix the signs of the jump relations. With
\(E(X)=-(2\pi)^{-1}\log|X|\), the kernel in
\eqref{eq:closed-double-layer} is \(\partial_{n(Y)}E(X-Y)\).
The divergence theorem gives the double-layer potential of \(1\)
equal to \(-1\) inside a bounded domain and zero outside. The
\(L^2\) trace relations on Lipschitz graphs follow from the Cauchy
singular-integral theorem cited above; the principal-value trace
is the average of the interior and exterior traces. Hence
\(D_\Gamma1=-1/2\), and the interior and exterior double-layer traces
are \(D_\Gamma-1/2\) and \(D_\Gamma+1/2\).
The corresponding single-layer normal derivatives, in the same
outward direction, are \(D_\Gamma^*+1/2\) and \(D_\Gamma^*-1/2\).
The identity \(D_\Gamma1=-1/2\) implies that \(D_\Gamma^*\)
preserves the mean-zero subspace.

\subsection{Periodisation and Floquet decomposition}\label{subsec:periodization}
Let \(F(x)=mx+h(x)\), where \(h\in C^\infty(\T;\R)\), and
\(\T=\R/\Z\) has Lebesgue measure of total mass one.
The graph is invariant under translation by \((1,m)\); one horizontal
period is its cell, and \(m\) is its tilt. For each kernel
\(k=p_F,r_F,t_F\), define the symmetric periodisation by
\begin{equation}\label{eq:symmetric-periodization}
 k^{\mathrm{per}}(x,y)=\lim_{N\to\infty}
             \sum_{j=-N}^{N}k(x,y+j),\qquad 0\le x,y<1,
\end{equation}
with principal values at the lifted diagonal when required.
We denote the resulting operators on \(L^2(\T)\) by
\(P_F,R_F,T_F\). The definition is independent of an integer
reindexing: the unmatched end terms tend to zero.

For large \(|j|\), uniformly in \(x,y\in[0,1]\), the leading
terms in \eqref{eq:symmetric-periodization} are
\begin{equation}\label{eq:periodic-leading-tails}
 p_F(x,y+j)=\frac{s_F(y)-m}{2\pi(1+m^2)j}+O(j^{-2}),\quad
 r_F(x,y+j)=\frac1{2\pi(1+m^2)j}+O(j^{-2}).
\end{equation}
The expansion for \(t_F\) has an analogous constant \(1/j\) term.
Symmetric pairing cancels these leading terms. For each fixed smooth
profile, Taylor expansion at the lifted diagonal gives a bounded
double-layer kernel and expresses \(R_F\) as multiplication by a smooth
function composed with the periodic Hilbert transform, plus an operator
with bounded kernel. Thus \(P_F\) and \(R_F\) are bounded. The kernel
identity \(T_F=R_FM_{s_F}-P_F\) gives boundedness of \(T_F\).
These preliminary bounds may depend on the profile; the Floquet
decomposition below will give bounds depending only on the slope.

For the cell operators, let \(U:\C\to L^2(\T)\) be constant
extension and let \(W:L^2(\T)\to\C\) be integration over one period.

The Floquet transform separates densities according to the phase
condition \(u(x+1)=e^{\ii\theta}u(x)\). Multiplication by
\(e^{-\ii\theta x}\) identifies these functions with periodic
functions. The operator for a fixed phase, acting in these periodic
coordinates, is called a \emph{fibre operator}.

\begin{lemma}\label{lem:floquet}
Let \(F(x)=mx+h(x)\) with \(h\) real, smooth and periodic. The
full-line graph operators admit a unitary Floquet decomposition.
Denote their fibres in periodic coordinates by
\(P^\theta,R^\theta,T^\theta\). These are norm continuous on
each of \((-\pi,0)\) and \((0,\pi)\), and have one-sided norm
limits
\begin{align}
 P^{0,\sigma}&=P_F+
       \frac{\ii\sigma}{2(1+m^2)}UW_b,\label{eq:floquet-p}\\
 R^{0,\sigma}&=R_F+
       \frac{\ii\sigma}{2(1+m^2)}UW,\label{eq:floquet-r}\\
 (P^{0,\sigma})^*&=P_F^*-
       \frac{\ii\sigma}{2(1+m^2)}M_bUW,
       \qquad \sigma\in\{-1,1\},\label{eq:floquet-adjoint}
\end{align}
where \(b=h'\), \(W_bf=\int_\T bf\), and \(\sigma\) is the
sign of the approaching phase.
\end{lemma}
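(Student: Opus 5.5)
The approach is to compute the fibre kernels explicitly, split off the slowly decaying $1/j$ tail of each lattice sum, and sum that tail in closed form.

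\emph{Kernel invariance.} Since $F(x+1)=F(x)+m$ and $s_F$ is $1$-periodic, each kernel $k\in\{p_F,r_F,t_F\}$ depends only on $r=x-y$, on $\Delta=F(x)-F(y)$ and, for $p_F$, on $s_F(y)$. Hence each satisfies
\[
k(x+1,y+1)=k(x,y).
\]

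\emph{Floquet transform.} I would use the standard Bloch--Floquet (Zak) transform
\[
(\mathcal Zu)_\theta(x)=\sum_{j}e^{-\ii\theta(x+j)}u(x+j),\qquad x\in[0,1),
\]
which is unitary from $L^2(\R)$ onto $L^2\bigl((-\pi,\pi)\times\T\bigr)$ after normalisation. By the invariance above it conjugates $\mathsf P_F,\mathsf R_F,\mathsf T_F$ into direct integrals. A short computation gives the fibre kernels
\[
k^\theta(x,y)=\sum_{j}k(x,y+j)\,e^{\ii\theta(y+j-x)},
\]
summed symmetrically, with the principal value taken in the $j=0$ term. Unitarity and \eqref{eq:uniform-line-bounds} then bound every fibre uniformly in $\theta$ by constants depending only on $\norm{F'}_\infty$.

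\emph{Splitting the lattice sum.} Write $s_F=m+b$. By \eqref{eq:periodic-leading-tails}, the $j$-th term of the sum equals $c_k(y)/j+O(j^{-2})$, uniformly in $x,y\in[0,1]$, where
\[
c_p(y)=\frac{b(y)}{2\pi(1+m^2)},\qquad c_r=\frac1{2\pi(1+m^2)},
\]
and $c_t$ is the analogous constant. I would decompose
\[
k^\theta(x,y)=e^{\ii\theta(y-x)}\Bigl[k(x,y)+\sum_{j\ne0}\bigl(k(x,y+j)-c_k(y)/j\bigr)e^{\ii\theta j}+c_k(y)\,S(\theta)\Bigr],
\qquad S(\theta)=\sum_{j\ne0}\frac{e^{\ii\theta j}}{j}.
\]
The first term is the same singular kernel that appears in $P_F,R_F,T_F$. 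The middle sum converges absolutely and uniformly, is jointly continuous in $(x,y,\theta)$, and its value at $\theta=0$ is exactly the symmetric periodisation \eqref{eq:symmetric-periodization} minus the $j=0$ term, because symmetric pairing kills the $c_k(y)/j$ terms. For $\theta\in(0,2\pi)$ the series has the classical closed form $S(\theta)=\ii(\pi-\theta)$. Thus $S$ is continuous on $(0,\pi)$ and on $(-\pi,0)$, with one-sided limits $S(0\pm)=\pm\ii\pi$; the symmetric value is $S(0)=0$.

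\emph{Norm continuity.} The prefactor $e^{\ii\theta(y-x)}$ is conjugation by the multiplication operator $M_{e^{-\ii\theta x}}$ on $L^2([0,1))$. This operator is norm continuous in $\theta$, since $\sup_{x\in[0,1)}\abs{e^{\ii\theta x}-e^{\ii\theta'x}}\le\abs{\theta-\theta'}$. The middle sum is a continuous family of bounded kernels, hence norm continuous in Hilbert--Schmidt norm. The last piece is a rank-one operator with coefficient $S(\theta)$. Together these give norm continuity of each fibre on $(-\pi,0)$ and on $(0,\pi)$.

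\emph{One-sided limits.} As $\theta\to0^\sigma$, the phases tend to $1$ in norm, and the first two pieces converge to the period-one operators. The rank-one piece converges to
\[
c_k(y)\,\ii\sigma\pi .
\]
For $p_F$ this equals $\frac{\ii\sigma}{2(1+m^2)}b(y)$, that is, the kernel of $\frac{\ii\sigma}{2(1+m^2)}UW_b$ with $(UW_bf)(x)=\int_\T bf$. This proves \eqref{eq:floquet-p}. For $r_F$, with $c_r$ constant, the same computation gives \eqref{eq:floquet-r}.

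\emph{Adjoint.} Taking adjoints in $L^2(\T)$ gives $(UW_b)^*f=\overline{\int_\T f}\cdot b$ conjugated back, i.e.\ $(UW_b)^*=M_bUW$ because $b$ is real. The coefficient $\ii\sigma$ becomes $-\ii\sigma$, which is \eqref{eq:floquet-adjoint}. Norm convergence of the operators implies norm convergence of the adjoints.

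\emph{Main obstacle.} The delicate step is bookkeeping rather than analysis. Everything needed is the following:
\begin{itemize}
\item the orientation and sign conventions ($e^{\pm\ii\theta}$ in the transform, and which side is labelled $\sigma=+1$) must produce $+\ii\sigma$ and not $-\ii\sigma$;
\item the principal value in the $j=0$ term must be exactly the one used for $P_F$;
\item the $O(j^{-2})$ remainder must be uniform in $x,y\in[0,1]$, including near the cell edges $y\approx0,1$, where the $j=\pm1$ terms are nearly singular.
\end{itemize}
I would handle the last point by moving the $j=\pm1$ terms into the principal-value part, so that they are treated like the diagonal term as in the definition of $P_F$. The sign of $\sigma$ would be fixed by checking against the explicit value of $S$.
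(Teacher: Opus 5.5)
Your proposal is correct and follows essentially the same route as the paper: the same gauged Floquet transform and fibre kernel $\sum_j e^{\ii\theta(y+j-x)}k(x,y+j)$, subtraction of the leading $1/j$ tails so that $\sum_{j\ne0}e^{\ii j\theta}/j=\ii(\pi\sgn\theta-\theta)$ produces the rank-one jumps with the right sign $+\ii\sigma$, and adjoints at the end. The only difference is near the periodic diagonal. The paper bounds $|e^{\ii\theta(y+j-x)}-1|\le|\theta||x-y-j|$ to cancel the simple singularity and applies Schur's test. You instead treat the $j=0,\pm1$ pieces as fixed bounded compressions of the line operator, conjugated by $M_{e^{-\ii\theta x}}$ and multiplied by scalar phases. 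That fix is genuinely needed: the $j=\pm1$ terms of $r_F$ and $t_F$ are singular at the corners of $[0,1)^2$, so your middle sum is not a bounded kernel as first written. Either way works, but you should still justify on a dense class that the fibres of the principal-value line operator are given by this conditionally summed kernel, as the paper does in its final sentence.
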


\begin{proof}
Initially for compactly supported functions, define
\[
 (\cB f)(\theta,x)=\sum_{\ell\in\Z}f(x+\ell)e^{-\ii\ell\theta},
 \qquad -\pi<\theta<\pi,\quad 0\le x<1.
\]
Parseval's identity extends this map to a unitary operator
\[
 L^2(\R)\longrightarrow
 L^2\bigl((-\pi,\pi),\dd\theta/(2\pi);L^2(0,1)\bigr).
\]
Translation covariance of the kernels gives the fibre kernel
\(\sum_j e^{\ii j\theta}k(x,y+j)\).
After the unitary gauge
\(\widetilde{\cB}f=e^{-\ii\theta x}\cB f\), this becomes
\begin{equation}\label{eq:gauged-kernel}
 \sum_{j\in\Z}e^{\ii\theta(y+j-x)}k(x,y+j).
\end{equation}
This gauge allows us to treat the periodic seam \(x=0=1\)
in the same way as the interior diagonal.

Subtract the leading tails in \eqref{eq:periodic-leading-tails},
where \(s_F-m=b\). The remaining series converges in operator norm,
uniformly in phase.
The symmetrically summed Fourier series is
\begin{equation}\label{eq:phase-tail}
 \sum_{j\ne0}\frac{e^{\ii j\theta}}j
       =\ii(\pi\sgn\theta-\theta),\qquad 0<|\theta|<2\pi.
\end{equation}
This follows by integrating the Poisson kernel and taking its
radial limit. At \(\theta=0\) the symmetric sum is zero.
The factors \(e^{\ii\theta(y-x)}\) tend uniformly to one, so
\eqref{eq:phase-tail} gives exactly the rank-one terms above.

For the finitely many kernels meeting the periodic diagonal,
\(\abs{e^{\ii\theta(y+j-x)}-1}
\le|\theta||x-y-j|\).
The double-layer kernel is bounded there. For the auxiliary
singular integral, the displayed factor cancels the simple
singularity. Schur's estimate
therefore gives operator-norm convergence of these differences.
The same argument about any nonzero phase gives norm continuity.
Taking adjoints proves \eqref{eq:floquet-adjoint}.
The fibre formulas are first justified on a dense class with
truncated kernels and Abel-summed tails; boundedness of the
full-line Cauchy operator and the preceding fibre estimates,
uniform in phase for this fixed profile, permit passage to the
\(L^2\) decomposition.
\end{proof}

\begin{lemma}\label{lem:periodic-basic}
The symmetric periodic operators \(P_F,R_F,T_F\) are bounded and satisfy
\begin{gather}
 R_F^*=-R_F,\qquad T_F^*=-T_F,\qquad
 P_F=R_FM_{s_F}-T_F,\qquad P_F^*=-M_{s_F}R_F+T_F,\label{eq:periodic-identities}\\
 P_F1=0,\qquad \int_\T R_F1=\int_\T T_F1=0. \label{eq:periodic-constants}
\end{gather}
Their norms have bounds depending only on \(\norm{F'}_\infty\).
Moreover \(P_F\) is compact, and
\begin{equation}\label{eq:hilbert-principal}
 R_F=-\frac12M_{\Phi(s_F)}H_{\T}+\mathcal R_F,\qquad
 \Phi(s)=\frac1{1+s^2},\qquad
 H_{\T}e_k=-\ii\sgn(k)e_k,
\end{equation}
where \(e_k(x)=e^{2\pi\ii kx}\) and \(\mathcal R_F\) is compact.
\end{lemma}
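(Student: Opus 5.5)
The plan is to read everything off the kernel-level identities and Lemma~\ref{lem:floquet}, working with symmetrically periodised kernels throughout.

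\textbf{Algebraic identities.} Swapping \(x\) and \(y\) changes the sign of both \(r\) and \(\Delta\), so \(r_F\) and \(t_F\) are antisymmetric. After lifting, \(y+j\mapsto x-j\), and symmetric summation over \(j\in[-N,N]\) is invariant under \(j\mapsto-j\). Hence \(r_F^{\mathrm{per}}(y,x)=-r_F^{\mathrm{per}}(x,y)\), and likewise for \(t_F\). This gives \(R_F^*=-R_F\) and \(T_F^*=-T_F\); the diagonal principal values are symmetric, so they cause no difficulty.

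The pointwise identity \(p_F=r_F(x,y)s_F(y)-t_F(x,y)\) is periodised term by term, using \(s_F(y+j)=s_F(y)\). This gives \(P_F=R_FM_{s_F}-T_F\), and taking adjoints gives the formula for \(P_F^*\).

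\textbf{Action on constants.} For \(\int R_F1=\langle R_F1,1\rangle=0\), use antisymmetry: \(\langle R_F1,1\rangle=-\langle 1,R_F1\rangle\), and the left side is real because the kernel is real. The same argument applies to \(T_F\).

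For \(P_F1=0\), the function \(P_F1(x)\) is the symmetric periodic sum of \(\pv\int_{\R}p_F(x,y)\dd y\) over windows \([-N,N+1)\). The kernel \(p_F\,\dd y\) is \(-(2\pi)^{-1}\,\dd\arg\bigl((y,F(y))-(x,F(x))\bigr)\), up to the orientation sign. The principal value therefore picks up the change in the angle from \(X\) to the endpoints at \(y=-N\) and \(y=N+1\), together with the half-turn jump at the diagonal. As \(N\to\infty\), the two endpoint directions tend to \(\pm(1,m)\), which are opposite. Their combined angle contribution cancels exactly against the principal-value half-angle \(\pm\pi\). The net contribution is the local term, which is \(0\) for a flat line; the sign check I would do via the straight case \(h=0\). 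A cleaner route is Lemma~\ref{lem:floquet}: \(P^\theta\) for \(\theta\ne0\) is the fibre of the full-line operator. By the jump relations for the lower graph domain, constants in the direct operator satisfy \(\mathsf P_F1=0\) on a graph (potential \(0\) below, \(1\) above, with \(D1\) equal to the average). The formula \eqref{eq:floquet-p} with \(W_b1=\int h'=0\) then gives \(P_F1=0\).

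\textbf{Bounds depending only on \(\norm{F'}_\infty\).} I would use Lemma~\ref{lem:floquet} together with \eqref{eq:uniform-line-bounds}. The full-line operators have norms at most \(C(L)\), and unitary Floquet decomposition bounds the essential supremum of fibre norms by the same constant. Norm continuity then bounds every fibre and, in the limit, \(\norm{P^{0,\sigma}}\le C_P(L)\). The rank-one corrections have norm at most \(\norm{b}_\infty/(2(1+m^2))\le L\) or \(1/2\), so \(\norm{P_F}\le C_P(L)+L\) and \(\norm{R_F}\le C_R(L)+1/2\). The bound for \(T_F\) follows from the identity.

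\textbf{Compactness and the Hilbert-transform structure.} These are local. For smooth \(F\), Taylor expansion shows that \(p_F\) is bounded near the lifted diagonal: the numerator is \(O(r^2)\). All other periodised terms have smooth kernels, so \(P_F\) has a bounded kernel on \(\T^2\) and is Hilbert--Schmidt, hence compact.

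For \(R_F\), near the diagonal
\[
 r_F=-\frac{1}{2\pi(1+s_F(x)^2)\,r}+O(1),
\]
with a bounded error. The periodic Hilbert transform has kernel \(\cot(\pi(x-y))\), and \(\cot(\pi r)=1/(\pi r)+O(r)\). Writing \(-\frac{1}{2\pi r}\Phi=-\frac12\Phi\cdot\frac{1}{\pi r}\) matches \(-\frac12M_{\Phi(s_F)}H_\T\) with sign convention \(H_\T e_k=-\ii\sgn(k)e_k\). I would verify this convention against the multiplier of \(\pv\cot\). Since \(\Phi(s_F(x))\) versus \(\Phi(s_F(y))\) differ by \(O(r)\), the remainder kernel is bounded, hence Hilbert--Schmidt and compact.

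\textbf{Main obstacle.} I expect the hardest step to be the careful sign and constant check in \(P_F1=0\) and in the Hilbert-transform normalisation, including the seam at \(x=0=1\). The gauge in Lemma~\ref{lem:floquet} handles the seam.
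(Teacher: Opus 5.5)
Your proposal is correct and follows the paper's proof essentially step by step. You use antisymmetry and term-by-term periodisation of the kernel identities, reality plus skew-adjointness for the means, and an angle primitive for $P_F1=0$. The paper uses the chord slope $\frac1{2\pi}\arctan\frac{F(x)-F(y)}{x-y}$ instead of $\arg$; it is continuous across the diagonal and tends to $\arctan m$ at both ends, so the total is visibly zero with no $\pm\pi$ bookkeeping and no residual local term. For the slope-only bounds you use Floquet fibres with norm continuity; the paper averages the two one-sided limits, whose rank-one terms cancel, which gives $\norm{P_F}\le C_P(L)$ directly rather than $C_P(L)+L$. Compactness comes from bounded remainder kernels after removing $-\frac12M_{\Phi(s_F)}H_{\T}$, as in the paper.

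Drop your alternative jump-relation route to $P_F1=0$. As written its constants are inconsistent: a potential equal to $0$ below and $1$ above would average to $1/2$, not $0$, and the correct values are $-1/2$ below and $1/2$ above. It would also need a separate justification of the $L^2$ jump relations, and of the passage to the zero-phase fibre, for the non-$L^2$ constant density on an unbounded graph.
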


\begin{proof}
By \eqref{eq:uniform-line-bounds} and Lemma~\ref{lem:floquet},
almost every fibre of the full-line double layer and auxiliary
singular integral has a norm bound depending only on the slope.
Norm continuity extends these bounds to every nonzero phase.
The one-sided norm limits satisfy the same bounds, and their
averages are \(P_F\) and \(R_F\). The identity
\(T_F=R_FM_{s_F}-P_F\) gives the corresponding bound for \(T_F\).
Periodising the kernel identities gives antisymmetry and
\eqref{eq:periodic-identities}.

For fixed \(x\), off the diagonal,
\[
 p_F(x,y)=\frac1{2\pi}\partial_y
       \arctan\frac{F(x)-F(y)}{x-y}.
\]
The primitive has matching limits across the diagonal, and its
limits at both ends of the real line are \(\arctan m\).
Integration therefore gives \(P_F1=0\). Reality and
skew-adjointness give the remaining mean identities in
\eqref{eq:periodic-constants}.

For a local lifted image, set \(r=x-y\) and
\[
 Q(x,y)=\int_0^1F'(y+tr)\dd t,\qquad
 B(x,y)=\int_0^1(1-t)F''(y+tr)\dd t.
\]
This gives \(p_F=B\Phi(Q)/(2\pi)\), a smooth expression that
also holds on the diagonal. The differentiated remainders from
remote translates are summable, so the full periodic \(P_F\)
has a smooth kernel. For \(R_F\), subtract the periodic Hilbert
kernel with target coefficient \(-\Phi(s_F)/2\).
Since \(Q(x,y)-s_F(x)=O(|x-y|)\), the remaining kernel is
bounded on the compact product torus. Both compactness claims
now follow from the Hilbert--Schmidt criterion.
\end{proof}

\subsection{Continuity of a fixed family}

The construction will vary a cell's profile and tilt while earlier
scales are held fixed. We need the resulting local operators to
vary continuously in norm.

\begin{lemma}\label{lem:smooth-continuity}
Suppose \(m_n\to m\), \(h_n\to h\) in \(C^2(\T)\), and
\(F_n(x)=m_nx+h_n(x)\), \(F(x)=mx+h(x)\).
Then \(P_{F_n}\to P_F\) and \(R_{F_n}\to R_F\) in
operator norm on \(L^2(\T)\).
In particular the local operators form norm-continuous,
uniformly bounded families on compact sets of smooth profile
parameters and bounded tilts.
\end{lemma}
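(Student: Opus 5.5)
The plan is to split each periodic operator into a part that lives near the lifted diagonal and a remote part built from the symmetrically summed translates. Each part will then be shown to converge in norm. Fix a cutoff \(\chi\in C^\infty(\R)\) with \(\chi=1\) on \([-1/4,1/4]\) and support in \([-1/2,1/2]\). Write the periodised kernel as
\[
 k^{\mathrm{per}}(x,y)=\sum_j\chi(x-y-j)k(x,y+j)+\sum_j(1-\chi(x-y-j))k(x,y+j).
\]
On \(\T\times\T\) the first sum has at most one nonzero term. The second sum is smooth away from the singularity. Its tail will be handled with the expansion \eqref{eq:periodic-leading-tails} after symmetric pairing.

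\emph{Remote part.} For \(|x-y-j|\ge1/4\), the denominator \(r^2+\Delta^2\ge r^2\) is bounded below. For \(|j|\ge2\), the symmetrically paired terms satisfy
\[
 k_F(x,y+j)+k_F(x,y-j)=O(j^{-2}).
\]
I will check that the \(O(j^{-2})\) constant is uniform over \(\norm{h}_{C^1}\le C\) and \(|m|\le C\). This comes from Taylor expanding \(\Delta=m(x-y-j)+h(x)-h(y)\) in \(1/j\); the leading term \(1/j\) has a coefficient depending continuously on \(m\) and \(s_F(y)\). The pairwise difference between the \(F_n\)-term and the \(F\)-term is bounded by
\[
 C\bigl(|m_n-m|+\norm{h_n-h}_{C^1}\bigr)j^{-2},
\]
because all kernels are smooth functions of \((r,\Delta,s(y))\) on the region \(|r|\ge1/4\), with derivative bounds of order \(|r|^{-2}\). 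Summing over \(j\) gives uniform kernel convergence. The Hilbert--Schmidt norm, and hence the operator norm, of the remote part of the difference then tends to zero.

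\emph{Local part for \(P\).} I will use the representation from the proof of Lemma~\ref{lem:periodic-basic}, namely \(p_F=B\,\Phi(Q)/(2\pi)\). Here \(Q\) and \(B\) are averages of \(F'\) and \(F''\) along the segment, so \(C^2\) convergence of \(h_n\) together with \(m_n\to m\) gives \(Q_n\to Q\) and \(B_n\to B\) uniformly. Because \(\Phi\) is Lipschitz, \(p_{F_n}\to p_F\) uniformly on the local region, and Hilbert--Schmidt again gives norm convergence. This is where the \(C^2\) hypothesis is used.

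\emph{Local part for \(R\).} This is the main obstacle, since the kernel is genuinely singular. I would write
\[
 r_F(x,y)=-\frac{1}{2\pi r\,(1+Q^2)}.
\]
Subtracting the \(F_n\) and \(F\) kernels gives
\[
 -\frac{1}{2\pi r}\bigl(\Phi(Q_n)-\Phi(Q)\bigr).
\]
I then split this as
\[
 \bigl(\Phi(Q_n)-\Phi(Q)\bigr)(x,y)=\bigl(\Phi(s_{F_n}(x))-\Phi(s_F(x))\bigr)+E_n(x,y).
\]
The first piece, times \(\chi/r\), is multiplication by a function tending uniformly to zero, composed with a fixed truncated Hilbert operator. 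That operator is bounded, so this piece tends to zero in norm. For the second piece, \(Q-s_F(x)=O(|r|)\) with constant \(\norm{F''}_\infty\), and the same holds for \(Q_n\). Moreover
\[
 E_n(x,y)/r=\int_0^1\!\!\int\bigl(\text{derivatives of }\Phi\circ F_n'\text{ minus those for }F'\bigr),
\]
which is bounded by \(C\norm{h_n-h}_{C^2}\) plus a modulus-of-continuity term. Concretely, I would write \(E_n/r\) as an integral of \(\Phi'(\cdot)F''\) along the segment and use uniform convergence of \(F_n''\) and of \(F_n'\). The kernel \(E_n/r\) is then bounded and tends to zero uniformly, and Schur's test closes the argument. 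The principal values should cause no difficulty, since the limit kernels are bounded or are exact Hilbert kernels.

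Uniform boundedness on compact parameter sets then follows either from Lemma~\ref{lem:periodic-basic}, since the slopes are bounded, or directly from the estimates above. Norm continuity on compact sets follows from sequential continuity, because these sets are metrisable in \(C^2\times\R\).
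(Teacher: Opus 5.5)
Your proposal is correct and takes essentially the paper's route. The local double-layer kernel $B\Phi(Q)/(2\pi)$ converges uniformly, and subtracting the target Hilbert coefficient $\Phi(F'(x))$ leaves a bounded auxiliary remainder $(\Phi(Q)-\Phi(s))/r$ that converges uniformly. Symmetric pairing of the remote translates gives $C(|m_n-m|+\|h_n-h\|_{C^1})j^{-2}$ bounds, and Schur's test together with boundedness of the (truncated) periodic Hilbert transform closes the argument.

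One small correction to your justification of the remote bound. Individual remote terms differ only by $O(1/|j|)$, because changing $m$ or $s_F(y)$ moves the leading $1/j$ coefficient. The $j^{-2}$ bound therefore comes from the pairing cancellation you set up, not from derivative bounds of order $|r|^{-2}$ alone.
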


\begin{proof}
For \(P_F\), the local expression \(B\Phi(Q)/(2\pi)\)
converges uniformly. For \(R_F\), first subtract the full
target Hilbert term in \eqref{eq:hilbert-principal}.
With \(s=F'(x)\), the local remainder contains
\[
 \frac{\Phi(Q)-\Phi(s)}{r}
 =\frac{Q-s}{r}\int_0^1\Phi'(s+t(Q-s))\dd t.
\]
This expression converges uniformly under \(C^2\) convergence.
Indeed, \(|Q-s|\le\norm{F''}_\infty|r|/2\), and applying the
same bound to \(F_n-F\) controls the difference of
\((Q-s)/r\). The target Hilbert coefficients also converge
uniformly. For remote translates, subtract the leading \(1/j\)
coefficient in \eqref{eq:periodic-leading-tails}. The remaining
differences are bounded by \(C(|m_n-m|+\norm{h_n-h}_{C^1})/j^2\).
The leading coefficients have zero symmetric \(1/j\) sum.
Schur's test, together with \(\norm{H_{\T}}=1\), now
gives convergence in operator norm.
\end{proof}

\section{A joint two-scale limit}\label{sec:two-scale}

We now add one rapidly oscillating cell to a smooth graph and
compute the resulting strong limits. We follow the double layer
and the auxiliary singular integral together, since the new
double layer contains a coupling through the latter. The limits
concern each fixed vector.

The use of separate slow and periodic variables is familiar from
two-scale convergence \cite{Nguetseng1989,Allaire1992}. Here we use
periodic unfolding \cite{CDG2008}, at integer scales on the torus,
to place the operators on a fixed product space.

To separate the slow position from the cell coordinate, let
\(\cH_1=L^2(\T_x\times\T_v)\), and define
\[
 (Uf)(x,v)=f(x),\qquad (Wg)(x)=\int_\T g(x,v)\dd v,\qquad Q=I-UW.
\]
For an integer \(N\ge1\), let
\begin{equation}\label{eq:unfolding}
 (E_Nf)(x,v)=f\bigl((\lfloor Nx\rfloor+v)/N\bigr).
\end{equation}
This map is an isometry from \(L^2(\T)\) to \(\cH_1\).
Its range consists of functions that, for fixed \(v\), are
constant in \(x\) on each interval \([k/N,(k+1)/N)\).
Consequently,
\begin{equation}\label{eq:unfolding-projections}
 E_N^*E_N=I,\qquad E_NE_N^*\longrightarrow I
 \quad\hbox{strongly}.
\end{equation}

\begin{proposition}\label{prop:two-scale}
Let \(F_0(x)=mx+h_0(x)\), \(h_0\in C^\infty(\T;\R)\), and
\(H_0\in C^\infty(\T^2;\R)\). Set
\[
 F_N(x)=F_0(x)+N^{-1}H_0(x,Nx),\quad
 s(x)=F_0'(x),\quad b(x,v)=\partial_vH_0(x,v).
\]
For each \(x\), let \(P_x,R_x,T_x\) be the full symmetric
periodic operators of the local graph
\(v\mapsto s(x)v+H_0(x,v)\). Regard them as direct-integral
operators \(\mathcal P,\mathcal R,\mathcal T\) on \(\cH_1\): for example,
\((\mathcal P g)(x,\cdot)=P_xg(x,\cdot)\), so the operator acts
in the new variable with \(x\) fixed. Define \(Vg=W(bg)\). Then
\begin{align}
 E_NR_{F_N}E_N^*&\longrightarrow
       R^{(1)}:=\mathcal R+UR_{F_0}W,\label{eq:two-scale-r}\\
 E_NP_{F_N}E_N^*&\longrightarrow
       D^{(1)}:=\mathcal P+UP_{F_0}W+UR_{F_0}V,\label{eq:two-scale-p}\\
 E_NM_{F_N'}E_N^*&\longrightarrow M_{s+b},\label{eq:two-scale-slope}\\
 E_NP_{F_N}^*E_N^*&\longrightarrow (D^{(1)})^*
       \label{eq:two-scale-adjoint}
\end{align}
strongly. All statements hold along subsequences of integers.
\end{proposition}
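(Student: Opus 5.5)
Since $E_N E_N^*\to I$ strongly and all operators involved are uniformly bounded, I will verify each convergence only on a dense class of test functions. By Lemma~\ref{lem:periodic-basic}, the norms of $P_{F_N},R_{F_N},T_{F_N}$ depend only on $\norm{F_N'}_\infty\le \norm{s}_\infty+\norm{\partial_vH_0}_\infty+O(N^{-1})$, which is bounded uniformly in $N$. The same lemma, applied fibrewise together with Lemma~\ref{lem:smooth-continuity} in $x$, shows that $\mathcal P,\mathcal R,\mathcal T$ are bounded. It therefore suffices to compute the weak limits of $\langle E_N R_{F_N}E_N^* g,k\rangle$ for smooth $g,k$ on $\T^2$, and then to upgrade these to strong convergence.

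The slope statement \eqref{eq:two-scale-slope} is immediate. We have $F_N'(x)=s(x)+b(x,Nx)+N^{-1}\partial_xH_0(x,Nx)$, and after unfolding this becomes multiplication by $s(x)+b(x,v)+O(N^{-1})$, up to a modulus-of-continuity error in $x$ on cells of width $1/N$.

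For \eqref{eq:two-scale-r}, I will split the kernel $r_{F_N}(x,y)$ with a cutoff $|x-y|\lessgtr\delta$, where $1/N\ll\delta\ll1$. In the near region I rescale $x=(k+v)/N$ and $y=(k+j+w)/N$. Then $N(F_N(x)-F_N(y))=s(x_k)(v-w-j)+H_0(x_k,v)-H_0(x_k,w)+o(1)$, and the kernel $N\cdot r_{F_N}$ becomes the local kernel at frozen slow position $x_k$, with the $N$ factors cancelling against the Jacobian $1/N$ of the $y$-integral. Summing over $|j|\le\delta N$ symmetrically reproduces the symmetric periodisation defining $R_{x}$. The remainder is controlled by the tail expansion \eqref{eq:periodic-leading-tails}: the symmetric $1/j$ parts cancel to $O(\sum_{|j|>\delta N}j^{-2})\to0$, up to a residual that pairs with the far region. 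In the far region, $|x-y|\ge\delta$, the $N^{-1}H_0$ perturbation is negligible in the kernel, so it tends to $r_{F_0}$ truncated. Against unfolded functions, $y$-integration over a cell averages $g(\cdot,w)$ in $w$, which gives $UR_{F_0}W$ after $\delta\to0$; the near contribution of the slow Hilbert-type tail is absorbed consistently because the symmetric $1/j$ cancellation matches the principal value of $R_{F_0}$. The same procedure with $p_{F_N}=(\Delta-r F_N'(y))/(2\pi(r^2+\Delta^2))$ gives \eqref{eq:two-scale-p}. Near the diagonal this yields $\mathcal P$. In the far region, $F_N'(y)=s(y)+b(y,Ny)$, so the kernel splits as $p_{F_0}$ plus $r_{F_0}(x,y)\,b(y,Ny)$, which unfold to $UP_{F_0}W$ and $UR_{F_0}V$ respectively. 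Equivalently, I can use the identity $P=RM_{s}-T$ from \eqref{eq:periodic-identities}: the $T$ limit is $\mathcal T+UT_{F_0}W$ by the argument for $R$, and combining with \eqref{eq:two-scale-slope} together with $\mathcal R M_{s}$ and $\mathcal T$ recombining into $\mathcal P$ gives the same result. This route avoids redoing the kernel analysis.

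To upgrade to strong convergence, I will show that the norms converge, or more simply that $E_NR_{F_N}^*E_N^*$ also converges weakly to $(R^{(1)})^*$ and that $\norm{E_NR_{F_N}E_N^*g}^2=\langle E_N R_{F_N}^*E_N^*E_NR_{F_N}E_N^*g,g\rangle$. The latter requires a product rule, which is itself the main obstacle. My first attempt will instead be a direct $L^2$ estimate of the difference on smooth $g$, by Schur's test on the near and far pieces separately: the near error is the local freezing error $O(\delta)$ plus a sum of $O(N^{-1})$ terms, and the far error is a uniform kernel approximation. The adjoint statement \eqref{eq:two-scale-adjoint} then follows from $P^*=-M_{s}R+T$ together with antisymmetry: $E_NP_{F_N}^*E_N^*=-(E_NM_{F_N'}E_N^*)(E_NR_{F_N}E_N^*)+E_NT_{F_N}E_N^*+o(1)$, using $E_NE_N^*\to I$ strongly between the factors and the fact that $E_N M E_N^*$ commutes with the projection up to $o(1)$. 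Its limit is $-M_{s+b}(\mathcal R+UR_{F_0}W)+\mathcal T+UT_{F_0}W$, and I will check this equals $(D^{(1)})^*$ algebraically using $P_x^*=-M_{s+b}R_x+T_x$ and $V^*=M_bU$. The phrase ``along subsequences'' I would only need if the slow-variable freezing at $\lfloor Nx\rfloor/N$ requires passing to $N$ in a lattice; I expect the full sequence to work. The hard step is the uniform control of the near-diagonal principal value in the transition zone $|j|\sim\delta N$, where neither the frozen local expansion nor the smooth far kernel is exact.
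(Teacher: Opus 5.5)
Your plan is essentially the paper's proof: strong convergence on smooth inputs via a near/far split at a cutoff, freezing the slow variable near the lifted diagonal to recover the fibre operators \(R_x,P_x\) (with the symmetric \(1/j\) cancellation controlling the truncated local periodisation), periodic averaging away from the diagonal to produce \(UR_{F_0}W\), \(UP_{F_0}W\) and \(UR_{F_0}V\), and the adjoint from \(P^*=-M_{F'}R+T\) together with skew-adjointness. Two small corrections are needed. First, the factorisation \(E_NM_{F_N'}R_{F_N}E_N^*=(E_NM_{F_N'}E_N^*)(E_NR_{F_N}E_N^*)\) is exact because \(E_N^*E_N=I\), so neither an \(o(1)\) term nor \(E_NE_N^*\to I\) is required. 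Second, inside one cell, \(|x-y|<1/N\), the actual and frozen kernels differ by an \(O(N^{-1})\) multiple of \(1/(x-y)\), since the slopes differ by \(N^{-1}\partial_xH_0\); this piece is not Schur-bounded and must be handled by the odd-kernel cancellation, subtracting the input's value at the target as the paper does. On \(1/N\le|x-y|<\delta\), by contrast, it is the relative chord error \(O(|x-y|)\), not an absolute \(o(1)\), that keeps the kernel difference bounded.
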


\begin{proof}
The derivatives
\[
 F_N'=F_0'+\partial_vH_0(x,Nx)+N^{-1}\partial_xH_0(x,Nx)
\]
are uniformly bounded. Hence \eqref{eq:uniform-line-bounds}
and Lemma \ref{lem:periodic-basic} give one uniform bound for
all the operators in the proposition. We first prove convergence
for a smooth product input \(\psi(x,v)\), sampled as
\(\psi_N(y)=\psi(y,Ny)\).
The adjoint of \eqref{eq:unfolding} averages \(\psi(\cdot,Ny)\)
over the \(N\)-cell containing \(y\), so
\(\norm{E_N^*\psi-\psi_N}_2=O_\psi(N^{-1})\).

Fix a cutoff \(0<\rho<1/2\). Near the lifted diagonal, we compare
the kernels with those of the local cell. Away from the diagonal,
periodic averaging will give the operators on the original graph.

\emph{Near-diagonal comparison for the auxiliary singular integral.}
In a lifted chart, let
\[
 r=x-y,\qquad \eta=N^{-1},\qquad u=Nx,\qquad v=Ny.
 \]
Freezing the slow coordinate at \(x\) gives the chord
\[
 \Delta_f=s(x)r+\eta[H_0(x,u)-H_0(x,v)].
\]
The actual chord \(\Delta_{\mathrm{act}}=F_N(x)-F_N(y)\)
differs from this frozen chord by
\begin{equation}\label{eq:chord-error}
 E=F_0(x)-F_0(y)-s(x)r+
             \eta[H_0(x,v)-H_0(y,v)].
\end{equation}
The constants below may depend on the fixed smooth functions.
On \(\eta\le |r|<\rho\), we have \(|E|\le Cr^2\).
Differentiating \(-r/[2\pi(r^2+\Delta^2)]\) with respect to
\(\Delta\) therefore bounds the kernel error by \(C\).

On \(|r|<\eta\), define
\[
 p=s(x)+\partial_vH_0(x,u),\qquad
 d=\eta\partial_xH_0(x,u).
\]
Taylor expansion in \eqref{eq:chord-error} gives
\(\Delta_{\mathrm{act}}/r-\Delta_f/r=d+O(|r|)\),
whereas \(\Delta_f/r=p+O(|r|/\eta)\). Since \(|d|\le C\eta\)
and \(\Phi\) has bounded first two derivatives on bounded
intervals, the difference of the auxiliary kernels is
\[
 \frac{a_N(x)}r+O(1),\qquad
 a_N(x)=-\frac{\Phi(p+d)-\Phi(p)}{2\pi},\quad |a_N(x)|\le C\eta.
\]
Subtract the value \(\psi_N(x)\) from the input in the odd
term. The subtracted constant has zero symmetric principal-value
contribution. The remaining contribution of that term is at most
\[
 C\eta\int_{|r|<\eta}
       \frac{|\psi_N(x-r)-\psi_N(x)|}{|r|}\dd r
 \le C_\psi\eta,
\]
because \(\norm{\psi_N'}_\infty\le C_\psi(1+N)\).
The bounded part contributes a further \(O_\psi(\eta)\).
We may also freeze the slow variable in \(\psi(x-r,u-Nr)\):
the input error is \(O_\psi(|r|)\), so its contribution is
\(O_\psi(\rho)\).

\emph{Near-diagonal comparison for the double layer.}
The actual source derivative is
\(F_0'(y)+\partial_vH_0(y,v)+\eta\partial_xH_0(y,v)\).
Compared with the frozen kernel, the numerator error is the
Taylor remainder from the original graph plus
\begin{align*}
 &\eta[H_0(x,v)-H_0(y,v)-r\partial_xH_0(y,v)]\\
 &\hspace{25mm}
       -r[\partial_vH_0(y,v)-\partial_vH_0(x,v)].
\end{align*}
This error is \(O(r^2)\), while the frozen numerator is bounded
by \(C\min(|r|,r^2/\eta)\). The denominator perturbation in
\eqref{eq:chord-error} is \(O(\eta r^2+r^3)\) on
\(|r|<\eta\), and \(O(r^3)\) on \(\eta\le|r|<\rho\).
Both denominators are at least \(r^2\), so the kernel difference
is bounded in both regions. Integrating this bound and freezing
the slow input variable again gives an error
\(O_\psi(\rho+\eta)\).

It remains to recover the full periodic local operators from these
near integrals. After the change \(r=\eta w\), the frozen
integrals extend over \(|w|<\rho/\eta\). Their remote expansions
have a \(1/w\) term with a periodic multiplier and an
\(O(w^{-2})\) remainder.
For \(R\) the multiplier is a constant times \(\psi(x,u-w)\);
for \(P\) it is a constant times
\(\partial_vH_0(x,u-w)\psi(x,u-w)\).
Split the entire multiplier into its mean and its mean-zero part.
The mean has zero symmetric integral against \(1/w\), and the
mean-zero part has a uniformly bounded periodic primitive.
Integration by parts therefore bounds the omitted tail by
\(C_\psi\eta/\rho\), including the \(O(w^{-2})\) remainder.
The near integrals give \(\mathcal R\psi\) and
\(\mathcal P\psi\), with an error that tends to zero as
\(N\to\infty\) and then \(\rho\downarrow0\).

\emph{Far region.}
Fix the number of images and the excluded strip before passing
to the averaging limit. Retain \(|j|\le M\) and exclude
\(|x-y-j|<\rho\). On the remaining intervals the heights converge
uniformly to \(F_0\). For the periodic averaging, let
\(a(x,y,v)\) be a smooth amplitude, periodic in \(v\), and let
\((W_va)(x,y)=\int_\T a(x,y,v)\dd v\).
Choose a periodic primitive \(A\) in \(v\) of \(a-W_va\). Then
\[
 a(x,y,Ny)-(W_va)(x,y)
 =N^{-1}\frac{\dd}{\dd y}A(x,y,Ny)-N^{-1}A_y(x,y,Ny).
\]
Integration over each retained interval, including its endpoint
terms, gives an error \(O_{\rho,M}(N^{-1})\), uniformly in the
target. The auxiliary singular integral has no oscillating source
normal, so its limit is \(R_{F_0}W\psi\). For the double layer,
averaging the source derivative against \(\psi(y,Ny)\) gives
\(P_{F_0}W\psi+R_{F_0}W(b\psi)\). The term containing
\(N^{-1}\partial_xH_0(y,Ny)\) tends uniformly to zero on these
intervals.

We next restore the omitted images. Their leading \(1/j\) terms
cancel in symmetric pairs, leaving kernels bounded by \(C/j^2\),
uniformly in \(N\). Here we use the uniform bounds on the periodic
height corrections and slopes. The total contribution is
\(O(M^{-1})\).

Finally, restore the deleted strip in the operators on the slow variable. Its
contribution to the double layer is \(O(\rho)\), because that
kernel is bounded. For the auxiliary singular integral, subtract
the target Hilbert coefficient and use, for the smooth inputs on the slow variable
\(g=W\psi\) and \(g=W(b\psi)\),
\[
 \pv\int_{|r|<\rho}\frac{g(x-r)}r\dd r
 =\int_{|r|<\rho}\frac{g(x-r)-g(x)}r\dd r=O(\rho).
\]
The bounded remainder satisfies the same estimate. Combining
these estimates with the near-region bounds gives an error of the form
\[
 C\rho+C\eta+C\eta/\rho+C/M+o_{\rho,M}(1).
\]
We take \(N\to\infty\) first, then \(M\to\infty\), and
finally \(\rho\downarrow0\).

To conclude, we unfold the sampled outputs. For the fixed smooth
data considered here, the local outputs
\(\mathcal R\psi\) and \(\mathcal P\psi\) are smooth in both
variables. Indeed, subtracting the target Hilbert singularity
leaves a smooth divided-difference kernel, and the differentiated
remote remainders are summable. The outputs on the slow variable are smooth as
well. For any such output \(g\), if \(g_N(x)=g(x,Nx)\), then
\[
 (E_Ng_N)(x,v)=g((\lfloor Nx\rfloor+v)/N,v)\longrightarrow g(x,v)
\]
uniformly. Thus unfolding the sampled limits proves
\eqref{eq:two-scale-r}--\eqref{eq:two-scale-p} on smooth product
inputs. Uniform boundedness extends the convergence to every fixed
\(L^2\) input.

The displayed derivative and \eqref{eq:unfolding-projections}
give \eqref{eq:two-scale-slope} directly. To obtain the adjoint
limit, first use the exact isometry identity
\[
 E_NR_{F_N}M_{F_N'}E_N^*
   =(E_NR_{F_N}E_N^*)(E_NM_{F_N'}E_N^*).
\]
By the established strong limits and uniform bounds, the
unfolded \(T_{F_N}=R_{F_N}M_{F_N'}-P_{F_N}\) converges strongly.
Passing to the limit in the pairings shows that the limits of
the two skew-adjoint operators remain skew-adjoint.
The identity \(P_{F_N}^*=-M_{F_N'}R_{F_N}+T_{F_N}\) now
proves \eqref{eq:two-scale-adjoint}, and identifies its limit
as \((D^{(1)})^*\).
\end{proof}

\begin{remark}\label{rem:affine-calibration}
The local auxiliary singular integral in \eqref{eq:two-scale-r}
acts on the full cell space. If \(H_0=0\), it is
\(-H_v/[2(1+s(x)^2)]\), where \(H_v\) denotes \(H_{\T}\)
acting in the fast variable. The original graph is unchanged in
this case. This term kills constant fast inputs but acts on fast
oscillations.
For an affine graph, a product Fourier input \(e_j(x)e_k(v)\),
\(k\ne0\), corresponds to frequency \(j+Nk\), which verifies
this term directly. At \(k=0\), only the auxiliary singular integral
on the slow variable remains.
\end{remark}

\section{Finite refinements with a stopping rule}\label{sec:finite-depth}

We next iterate the two-scale limit a finite number of times.
The scales must be chosen in order: finer scales may depend on
those already fixed. We then introduce a stopping rule that keeps
the slope bound independent of the number of refinements.

\subsection{Realisation at separated scales}
Here realisation means choosing a smooth graph with sufficiently
separated periods so that a prescribed product-space density and its
residual are approximated by a density and residual on that graph.
Let \(\cH_j=L^2(\T^{j+1})\), with product Lebesgue measure of
total mass one. Let \(h_j(x_0,\ldots,x_j)\), \(0\le j\le k\), be real
smooth functions, periodic in every variable. Define
\[
 b_j=\partial_{x_j}h_j,\qquad
 S_j=m+\sum_{i=0}^j b_i,\qquad S_{-1}=m.
\]
For \(j\ge1\), the operators \(U_j:\cH_{j-1}\to\cH_j\) and \(W_j=U_j^*\)
are constant extension and integration in \(x_j\). Set
\(V_j=W_jM_{b_j}\) and \(Q_j=I-U_jW_j\). For each fixed choice of the preceding
coordinates \((x_0,\ldots,x_{j-1})\), let \(P_j,R_j,T_j\) be
the periodic operators for
\[
 v\longmapsto S_{j-1}v+h_j(x_0,\ldots,x_{j-1},v).
\]
They act as direct integrals on \(\cH_j\). The level-zero
operators are those of \(mx_0+h_0(x_0)\).

\begin{proposition}\label{prop:finite-depth}
Suppose \(\abs{S_j}\le L\) for every \(j\le k\). Define
\begin{align}
 R^{(0)}&=R_0,\qquad D^{(0)}=P_0,\notag\\
 R^{(j)}&=R_j+U_jR^{(j-1)}W_j,\label{eq:finite-r}\\
 D^{(j)}&=P_j+U_jD^{(j-1)}W_j
                   +U_jR^{(j-1)}V_j. \label{eq:finite-d}
\end{align}
These are ordered strong limits of unfolded operators of smooth
periodic graphs with Lipschitz constant at most \(L+1\).
Their adjoints are the corresponding strong limits of the unfolded
adjoints. In particular
\begin{equation}\label{eq:depth-uniform-bounds}
 \norm{R^{(j)}}\le C_R(L+1),\qquad
 \norm{D^{(j)}}\le C_P(L+1),
\end{equation}
where the constants do not depend on \(k\), the profiles, or their
derivatives. For any fixed \(u\in\cH_k\), \(z\in\C\), and
\(\epsilon>0\), a realising graph \(F\) and a vector \(u_F\in L^2(\T)\)
can be chosen so that
\[
 \bigl|\norm{u_F}_2-\norm{u}_2\bigr|<\epsilon,\qquad
 \norm{(P_F^*-z)u_F}_2
       \le \norm{((D^{(k)})^*-z)u}_2+\epsilon.
\]
\end{proposition}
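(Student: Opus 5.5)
The plan is induction on the depth $k$, with Proposition~\ref{prop:two-scale} applied at each step. The one new ingredient is a version of that proposition in which the base graph is itself the graph $F^{(j-1)}$ already realised at depth $j-1$, and the previously unfolded variables are treated as parameters.

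\emph{Graphs and slopes.} Choose integers $N_1\ll N_2\ll\cdots\ll N_k$ in order, and set
\[
 F^{(j)}(x)=F^{(j-1)}(x)+\frac{1}{N_1\cdots N_j}\,h_j\bigl(x,N_1x,\ldots,N_1\cdots N_jx\bigr),
\]
with $F^{(0)}=mx+h_0(x)$. The derivative of $F^{(k)}$ is $S_k$ evaluated at the sampled point, plus error terms involving the slow partial derivatives $\partial_{x_i}h_j$, $i<j$. Each such term carries a factor $(N_{i+1}\cdots N_j)^{-1}$, so choosing each $N_j$ large relative to the $C^1$ norm of $h_j$ makes their total at most $1$. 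This gives $\Lip(F^{(k)})\le L+1$.

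\emph{Induction on $j$.} Define the iterated unfolding $E^{(j)}=(E_{N_j}\otimes I)\cdots$ from $L^2(\T)$ to $\cH_j$. This is a composition of isometries, and $E^{(j)}(E^{(j)})^*\to I$ strongly in the ordered limit. Proposition~\ref{prop:two-scale} is stated for a smooth base graph and a profile $H_0(x,v)$. At step $j$, I apply it with the previous coordinates treated as parameters, so that both the base and the new cell are smooth in all variables jointly. Its proof should go through verbatim: every constant depends only on finitely many derivatives of the fixed smooth functions, and the far-region averaging identity is unchanged. The resulting strong limits are
\[
 R^{(j)}=R_j+U_jR^{(j-1)}W_j,\qquad D^{(j)}=P_j+U_jD^{(j-1)}W_j+U_jR^{(j-1)}V_j,
\]
which are exactly \eqref{eq:finite-r}--\eqref{eq:finite-d}. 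Here the local operators $P_j,R_j$ come from the local slope $S_{j-1}$.

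To pass from the convergence at the previous level to this level, I use the ordered limit. For $\psi\in\cH_j$, the operator $E^{(j-1)}(\cdot)(E^{(j-1)})^*$ is already within $\epsilon$ of its limit on the finitely many vectors $W_j\psi$ and $V_j\psi$. The new scale then only has to be taken large for those fixed vectors, which is why the statement holds only along ordered subsequences.

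The adjoint convergence follows exactly as in the last step of Proposition~\ref{prop:two-scale}. Using $P^*=-M_{F'}R+T$ with the unfolded slope converging to $M_{S_k}$, the limits of the skew-adjoint parts remain skew-adjoint, and this identifies the limit of the unfolded adjoints as $(D^{(k)})^*$.

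\emph{Uniform bounds.} The bound \eqref{eq:depth-uniform-bounds} is the key payoff. Strong limits of operators are bounded in norm by the $\liminf$ of the norms. Each unfolded operator is unitarily conjugate, on the range of the isometry, to $P_F$ or $R_F$ with $\Lip(F)\le L+1$, so Lemma~\ref{lem:periodic-basic} bounds its norm by $C_P(L+1)$ or $C_R(L+1)$. These constants depend on the Lipschitz constant alone, and not on $k$ or on any derivatives of the profiles.

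\emph{Realisation.} Given $u$, set $u_F=(E^{(k)})^*u$. Then $\|u_F\|_2=\|E^{(k)}(E^{(k)})^*u\|_2\to\|u\|_2$. Moreover,
\[
 \|(P_F^*-z)u_F\|_2=\bigl\|E^{(k)}(P_F^*-z)(E^{(k)})^*u\bigr\|_2
 \longrightarrow\bigl\|((D^{(k)})^*-z)u\bigr\|_2,
\]
using that $E^{(k)}$ is an isometry together with the strong convergence. Taking the scales $N_j$ large enough gives both $\epsilon$-inequalities.

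\emph{Main obstacle.} The hardest step is the inductive application of the two-scale proposition when the base graph is itself multiscale. Its constants depend on derivatives of the base graph, and those derivatives blow up with the $N_i$. I would therefore not rerun the proposition on $F^{(j-1)}$ as a graph. Instead, I would run it on the jointly smooth parametric profile on $\T^{j+1}$, with the $N_i$ frozen as sampling. The near-diagonal and far-region estimates then use only derivatives of $h_0,\ldots,h_j$. After that, the order of limits is chosen so that each finer $N_j$ is taken large depending on everything fixed before it.
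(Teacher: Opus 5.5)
Your overall architecture matches the paper's: the same multiscale graphs and slope estimate, and the ordered limit $\lim_{N_1}\cdots\lim_{N_k}$ realised by choosing coarse integers first. The adjoint is obtained in the same way, via $T=RM_{F'}-P$ and $P^*=-M_{F'}R+T$. So are the norm bounds, by lower semicontinuity under strong limits, and the realisation, through $u_F=\cE_{\boldsymbol N}^*u$. The gap is in the inductive step.

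\emph{The obstacle you single out is not one.} In the innermost limit the coarser integers are fixed. So the paper applies Proposition~\ref{prop:two-scale} exactly as stated, with the fixed smooth base graph $F^{(j-1)}$, $H_0(x,v)=h_j(x,N_1x,\ldots,N_1\cdots N_{j-1}x,v)$, and frequency $N_1\cdots N_j$ along multiples of $N_1\cdots N_{j-1}$. Constants that grow with the coarse integers are harmless there. Conversely, the uniform parametric version you propose does not follow ``verbatim''. The near-diagonal freezing bound $|E|\le Cr^2$ on $\eta\le|r|<\rho$ has $C\gtrsim\norm{F_0''}_\infty$, which is generically of order $N_1\cdots N_{j-1}$. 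For fixed $\rho$ the frozen cell therefore stops describing the actual graph once $\rho$ exceeds the previous period.

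\emph{You skip the step the paper actually has to prove.} The inner limit produces local cells whose tilt is the actual base slope $(F^{(j-1)})'(x)=S_{j-1}(x,N_1x,\ldots)+\sum_{l<i}(\epsilon_i/\epsilon_l)\partial_{x_l}h_i$. Their parameters are sampled along the diagonal, and the coupling $V$ uses a sampled $b$. Your sentence that $P_j,R_j$ ``come from the local slope $S_{j-1}$'' is exactly the conclusion that needs proof. The paper obtains it in three steps.
\begin{itemize}
\item[(a)] It applies Lemma~\ref{lem:smooth-continuity} on the compact $C^2$ family $\{h_j(x_0,\ldots,x_{j-1},\cdot)\}$ with bounded tilts. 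This replaces the tilt uniformly in the slow parameters, since the cross terms vanish only in the outer limits.
\item[(b)] It pushes each direct-integral field through the coarser unfolding via $\overline E_N\mathcal A_N\overline E_N^*=M_{\widehat A_N}\Pi_N$, where $\overline E_N=E_N\otimes I_{\mathcal K}$ and $\widehat A_N(x,v)=A_N((\lfloor Nx\rfloor+v)/N)$. Here $\widehat A_N$ converges uniformly in operator norm because the sampled parameters converge to the product coordinates, and $\Pi_N\to I$ strongly.
\item[(c)] For the blocks on preceding variables, it uses $B_N\otimes I_{\mathcal K}\to B\otimes I_{\mathcal K}$ strongly, together with commutation of $U_j,W_j$ with unfolding.
\end{itemize}
Without (a)--(c), your induction does not identify the limits as \eqref{eq:finite-r}--\eqref{eq:finite-d} on $\cH_j$, even though the formulas you write are the right ones.
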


\begin{proof}
We first construct the graphs and control their slopes. For
integers \(N_1,\ldots,N_k\), let
\(\epsilon_0=1\), \(\epsilon_j=(N_1\cdots N_j)^{-1}\), and set
\begin{equation}\label{eq:actual-finite-graph}
 F_{\boldsymbol N}(x)=mx+
  \sum_{j=0}^k\epsilon_j
      h_j(x,x/\epsilon_1,\ldots,x/\epsilon_j).
\end{equation}
The leading part of the derivative is \(S_k\), evaluated on the
diagonal variables. Differentiation in the earlier coordinates
produces the remaining terms,
\[
 \sum_{j=1}^k\sum_{i=0}^{j-1}
      \frac{\epsilon_j}{\epsilon_i}
      (\partial_{x_i}h_j)(x,x/\epsilon_1,\ldots,x/\epsilon_j).
\]
Choose \(B\ge1+\sum_{i<j}\norm{\partial_{x_i}h_j}_\infty\).
If every \(N_i\ge B\), this remainder has absolute value at most
one. Every such graph therefore has Lipschitz constant at most
\(L+1\).

We apply the two-scale proposition first to the finest variable,
with all earlier integers fixed, and then successively to the
remaining variables. The unfolding is the composition
of \(E_{N_1\cdots N_k}\) on the original variable,
\(E_{N_1\cdots N_{k-1}}\) on the remaining slow variable, and so on,
ending with \(E_{N_1}\); at each step it acts as the identity on
variables already unfolded. Denote this isometry by
\(\cE_{\boldsymbol N}:L^2(\T)\to\cH_k\).
All convergence statements use the order
\begin{equation}\label{eq:ordered-limits}
 \lim_{N_1\to\infty}\lim_{N_2\to\infty}\cdots
                     \lim_{N_k\to\infty},
\end{equation}
with the rightmost limit taken first.

The passage between two successive scales requires attention to
the tilt of the local cell. At depth two, the graphs are
\begin{align*}
 F^{[1]}_{N_1}(x)&=mx+h_0(x)+N_1^{-1}h_1(x,N_1x),\\
 F_{N_1,N_2}(x)&=F^{[1]}_{N_1}(x)
       +(N_1N_2)^{-1}h_2(x,N_1x,N_1N_2x).
\end{align*}
For fixed \(N_1\), apply Proposition \ref{prop:two-scale} at
frequency \(N_1N_2\), along the subsequence of multiples of
\(N_1\). Its slow graph is \(F^{[1]}_{N_1}\), whose derivative is
\[
 (F^{[1]}_{N_1})'(x)
   =S_1(x,N_1x)+N_1^{-1}\partial_{x_0}h_1(x,N_1x).
\]
The initial tilt of the cell in the last variable is this
derivative. To replace it by \(S_1(x,N_1x)\), observe that
the extra term tends uniformly to zero as \(N_1\to\infty\).
The profiles \(h_2(x_0,x_1,\cdot)\), together with their bounded
tilts, form a compact family in the \(C^2\) topology of the last
variable. Lemma \ref{lem:smooth-continuity} therefore makes the
replacement in local operator norm, uniformly in the slow
parameters. After unfolding by \(E_{N_1}\), the sampled
parameters \((x,N_1x)\) converge to \((x_0,x_1)\): the first
coordinate differs from \(x_0\) by at most \(N_1^{-1}\), and
the second is exactly \(x_1\) modulo one. The same continuity
then identifies the local operator limits on smooth tensor
products. Another application of Proposition \ref{prop:two-scale}
gives convergence of the remaining blocks on the preceding variables.

We must also retain the variables that have already been unfolded.
Let \(\mathcal K\) be their Hilbert space, and define
\(\overline E_N=E_N\otimes I_{\mathcal K}\) and
\(\Pi_N=\overline E_N\overline E_N^*\). If \(\mathcal A_N\)
is a direct-integral operator with field \(A_N(t)\) on
\(\mathcal K\), then
\[
 \overline E_N\mathcal A_N\overline E_N^*
 =M_{\widehat A_N}\Pi_N,\qquad
 \widehat A_N(x,v)=A_N((\lfloor Nx\rfloor+v)/N).
\]
Here \(M_{\widehat A_N}\) denotes multiplication by the operator
field. Continuity on the compact family described above gives
uniform operator-norm convergence of these fields to the required
local field. Together with \(\Pi_N\to I\) strongly, this proves
strong convergence of the conjugated operators. For the terms acting
on the preceding variables, uniformly bounded strong convergence \(B_N\to B\) implies
\(B_N\otimes I_{\mathcal K}\to B\otimes I_{\mathcal K}\)
strongly, first on finite tensor sums and then by density.
Integration and constant extension in an already unfolded variable
commute with this unfolding. The identity
\(\overline E_N^*\overline E_N=I\) and uniform boundedness
then allow us to pass to limits in products.

At any fixed depth, the intermediate slope perturbations are
sums of the cross-derivative terms displayed above. Each term
vanishes at the corresponding stage of the ordered limits.
The fixed smooth profile families are compact, so the same local
operator continuity justifies the induction, including the local
slope multipliers. The uniform graph bounds extend convergence
from smooth tensor products to all fixed \(L^2\) inputs.
The two-scale formulas therefore give
\eqref{eq:finite-r} and \eqref{eq:finite-d}, together with
the strong limit of the slope multiplier.
The identity \(T=RM_{F'}-P\), followed by
\(P^*=-M_{F'}R+T\), gives the adjoint assertion at every step.
Taking limits of the norm bounds gives \eqref{eq:depth-uniform-bounds}.

The range projections \(\cE_{\boldsymbol N}\cE_{\boldsymbol N}^*\)
tend strongly to the identity in the same order. Thus the
unfolding of
\((P_{F_{\boldsymbol N}}^*-z)\cE_{\boldsymbol N}^*u\) converges to
\(((D^{(k)})^*-z)u\). The scalar part
\(-z\cE_{\boldsymbol N}\cE_{\boldsymbol N}^*u\) converges to \(-zu\). Since the unfolding is an isometry, the
norms also converge. To achieve a prescribed error, choose the
coarser integers first and then the finer integers; the threshold
for each finer integer may depend on all earlier choices.
This successive choice proves the final assertion in the stated
order of limits.
\end{proof}

\subsection{Stopping and the slope bound}
Fix \(a>0\) and two smooth cells
\[
 F_m(v)=mv+h_m(v),\qquad m\in\{a,-a\},
\]
with \(\abs{F_m'}\le L\). Their periodic double layer and
auxiliary singular integral are \(P_m=P_{F_m}\) and
\(R_m=R_{F_m}\). Suppose that the same interval
\(J\Subset(0,1)\) is strictly inside an affine interval on which
\(F_m'=-m\), and that \(0<|J|<1\).
The slope on this interval is the tilt of the other cell.
Choose an initial tilt \(m_0\in\{a,-a\}\) and take \(m=m_0\)
in the preceding construction. Define \(m_j=(-1)^jm_0\) and
\[
 A_0=1,\qquad A_j=\prod_{i<j}\one_J(x_i),\qquad
 h_j^{\mathrm{stop}}=A_jh_{m_j}(x_j).
\]
The next cell is therefore inserted precisely when every preceding
coordinate lies in \(J\). This defines the stopping rule on the
product torus. We now approximate its operators by
those of smooth graphs.

\begin{lemma}\label{lem:hard-stop}
At each fixed depth the recursions \eqref{eq:finite-r} and
\eqref{eq:finite-d} with these stopped profiles, as well as their
adjoints, are strong limits of the smooth finite-depth realisations
in Proposition \ref{prop:finite-depth}. They obey
\eqref{eq:depth-uniform-bounds} with the same \(L+1\).
\end{lemma}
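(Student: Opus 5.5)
The stopped profiles \(h_j^{\mathrm{stop}}=A_jh_{m_j}(x_j)\) are not smooth: the indicator \(\one_J(x_i)\) jumps at \(\partial J\). Proposition~\ref{prop:finite-depth} therefore does not apply directly. I would smooth the stopping, apply the proposition to the smoothed profiles, and then remove the smoothing by a diagonal argument at fixed depth \(k\).

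\textbf{Smoothing the cutoff.} For \(\delta>0\), choose smooth \(\chi_\delta:\T\to[0,1]\) with \(\chi_\delta=\one_J\) outside a \(\delta\)-neighbourhood of \(\partial J\). Set \(A_j^\delta=\prod_{i<j}\chi_\delta(x_i)\) and \(h_j^\delta=A_j^\delta h_{m_j}(x_j)\). These are smooth and periodic in every variable. The key point for the slope bound is that
\[
S_j^\delta=m_0+\sum_{i\le j}A_i^\delta h_{m_i}'(x_i).
\]
Where \(\chi_\delta\) is not \(0\) or \(1\), the coordinate \(x_i\) lies near \(\partial J\). Since \(J\) is strictly inside the affine interval where \(F_{m_i}'=-m_i\), we may take \(\delta\) small enough that \(h_{m_i}'(x_i)=-2m_i\) there.

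Inductively, on the support of \(A_j^\delta\) the slope \(S_{j-1}^\delta\) equals \(m_j\). Indeed, the previous cells have slope \(-m_{i}=m_{i+1}\) in their stopping zones, so the sum telescopes. Then \(S_j^\delta=m_j+A_j^\delta h_{m_j}'(x_j)\), which is a convex combination of \(m_j\) and \(F_{m_j}'(x_j)\). Hence \(|S_j^\delta|\le L\). Here I use that \(|m_j|=a\le L\), which follows since \(|F_m'|\le L\) and \(F'_m=-m\) on \(J\).

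The telescoping is the delicate point. When \(A_j^\delta\) is fractional, \(S_{j-1}^\delta\) is itself an interpolation between \(m_{j-1}\) and \(m_j\), and the product of fractional factors must be tracked. I would handle this by choosing \(\chi_\delta\) supported inside the affine interval, with \(\chi_\delta=1\) on \(J\). Then wherever \(A_i^\delta\neq0\), all earlier \(x_l\) lie in the affine zone, so the partial slopes are exact and every \(S_j^\delta\) is a convex combination as above. The bound \(L+1\) then comes from Proposition~\ref{prop:finite-depth}, and \eqref{eq:depth-uniform-bounds} holds uniformly in \(\delta\).

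\textbf{Passing \(\delta\to0\).} With the smoothed profiles, Proposition~\ref{prop:finite-depth} gives \(R^{(j),\delta}\), \(D^{(j),\delta}\) and their adjoints as strong limits of unfolded smooth realisations. I would show \(R^{(j),\delta}\to R^{(j)}\) and \(D^{(j),\delta}\to D^{(j)}\) strongly by induction on \(j\) through \eqref{eq:finite-r}--\eqref{eq:finite-d}.

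The local operators \(P_j^\delta,R_j^\delta\) are direct integrals over \((x_0,\dots,x_{j-1})\). Outside a set of measure \(O(j\delta)\), the profile is either \(h_{m_j}\) or \(0\) exactly, and in both cases it equals the stopped local operator. On the bad set, all local operators are bounded uniformly by Lemma~\ref{lem:periodic-basic}, since the slopes are bounded by \(L\). So for fixed \(u\), dominated convergence gives \(\|(\mathcal P_j^\delta-\mathcal P_j)u\|\to0\), and likewise for \(\mathcal R_j\). The same argument applies to \(V_j^\delta=W_jM_{b_j^\delta}\), since the \(b_j^\delta\) are uniformly bounded and converge almost everywhere. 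Products of uniformly bounded strongly convergent families converge strongly, which closes the induction.

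For adjoints, the identities \(T=RM_{S}-P\) and \(P^*=-M_SR+T\), together with almost-everywhere convergence of the bounded multipliers \(S_j^\delta\), transfer the convergence. Alternatively, the same dominated-convergence argument can be applied directly to \(\mathcal P_j^{*}\). The norm bounds pass to the strong limits.

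\textbf{Diagonal extraction.} Strong limits of strong limits are realised by a diagonal choice: fix the input, pick \(\delta\), then pick the integers \(N_i\) in the ordered manner of \eqref{eq:ordered-limits}. This yields a smooth realisation for each fixed vector with the required precision and Lipschitz constant at most \(L+1\).

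The main obstacle is the first step: constructing the cutoff so that the slope bound stays independent of the depth \(k\) in the transition zones.
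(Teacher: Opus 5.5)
Your overall route is the paper's, and apart from the slope bound it goes through. You use smooth cutoffs \(0\le\chi\le1\) supported in the affine zone and apply Proposition~\ref{prop:finite-depth} for each fixed choice of cutoffs. You then pass to strong limits as the cutoffs tend to the indicators; your observation that the local cells coincide exactly with the stopped ones off a set of measure \(O(j\delta)\) is a tidy substitute for the paper's norm-continuity-plus-dominated-convergence step. Adjoints come from \(T=RM_S-P\), and cutoffs are chosen before the scale integers.

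The slope bound, which you rightly call the crux, fails as written. The claim that \(S_{j-1}^\delta=m_j\) on \(\{A_j^\delta>0\}\) is false wherever an earlier cutoff is fractional. Putting \(\supp\chi_\delta\) inside the affine interval does not make it true. Already for \(j=2\), on \(\{A_2^\delta>0\}\) both \(x_0,x_1\) lie in the affine zone, and with \(\chi_i=\chi_\delta(x_i)\) one finds
\[
 S_1^\delta=(1-2\chi_0)m_1,\qquad
 S_2^\delta=(1-2\chi_0+\chi_0\chi_1)m_1+\chi_0\chi_1F_{m_2}'(x_2).
\]
So \(S_1^\delta\ne m_2=-m_1\) unless \(\chi_0=1\), and \(S_2^\delta\ne m_2+A_2^\delta h_{m_2}'(x_2)\). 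When \(\chi_0\) is small, \(S_2^\delta\) is close to \(m_1\), in general outside the segment joining \(m_2\) and \(F_{m_2}'(x_2)\). Your final assertion that every \(S_j^\delta\) is a convex combination is true, but nothing in the proposal establishes it.

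The support condition gives \(F_{m_i}'(x_i)=-m_i=m_{i+1}\), i.e.\ \(h_{m_i}'(x_i)=-2m_i\), for every \(i<j\) on \(\{A_j^\delta>0\}\). Substitute this into \(S_j^\delta=m_0+\sum_{i\le j}A_i^\delta h_{m_i}'(x_i)\) and regroup to get the identity used in the paper:
\[
 S_j^\delta=\sum_{i<j}\bigl(A_i^\delta-A_{i+1}^\delta\bigr)m_{i+1}
            +A_j^\delta F_{m_j}'(x_j)
 \qquad\text{on }\{A_j^\delta>0\}.
\]
The weights \(A_i^\delta-A_{i+1}^\delta=A_i^\delta(1-\chi_\delta(x_i))\ge0\) and \(A_j^\delta\) sum to \(A_0^\delta=1\); this is where \(0\le\chi_\delta\le1\) enters. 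Hence \(S_j^\delta\) is a convex combination of all the earlier tilts \(\pm a\) and \(F_{m_j}'(x_j)\), so \(|S_j^\delta|\le L\). On \(\{A_j^\delta=0\}\), take the first \(\ell<j\) with \(\chi_\delta(x_\ell)=0\). Then \(A_\ell^\delta>0\) and all later profiles vanish, so \(S_j^\delta=S_\ell^\delta\), which the same identity bounds at level \(\ell\). With this in place of your telescoping claim, the rest of your argument stands.
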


\begin{proof}
First replace each \(\one_J\) by a smooth function
\(0\le\chi_i\le1\), supported in \(J\), tending to \(\one_J\)
almost everywhere. Let \(A_j=\prod_{i<j}\chi_i(x_i)\) for this
paragraph. If \(A_j>0\), all preceding variables lie in the
prescribed affine intervals, where \(h_{m_i}'=-2m_i\).
The leading slope after insertion of the \(j\)th profile is
\begin{equation}\label{eq:convex-slope}
 S_j=\sum_{i<j}(A_i-A_{i+1})m_{i+1}
                       +A_j F_{m_j}'(x_j)
                       \qquad\text{on }\{A_j>0\}.
\end{equation}
This is a convex combination: the coefficients are nonnegative
and sum to one, and \(\abs{m_i}\le L\). Hence
\(\abs{S_j}\le L\).
If \(A_j=0\), let \(\ell<j\) be the first index for which
\(\chi_\ell(x_\ell)=0\). Then \(A_\ell>0\), so the same identity
bounds \(S_\ell\). All later profiles vanish and \(S_j=S_\ell\).
Thus the same slope bound holds on the entire product torus.

At fixed depth, all local cells belong to a compact family
\(v\mapsto sv+t h_m(v)\), with bounded \(s\) and \(0\le t\le1\).
The norm continuity established in Section \ref{sec:operators}
and dominated convergence give strong convergence of their
direct-integral operators and of the slope multipliers as the
cutoffs tend to their indicators. The recursions and the two
skew-adjoint operator identities give strong convergence of
\(R,D,T,D^*\) by induction. Proposition \ref{prop:finite-depth}
applies to every fixed set of smooth cutoffs. Its slope bound is
independent of their derivatives. We therefore choose the cutoffs
first and the scale integers second. This realises each fixed
vector and its adjoint residual using smooth graphs throughout;
the discontinuous indicators occur only in the product-space limit.
\end{proof}

\subsection{The resolvent recursion}
The two basic cell operators will be invertible at the chosen
parameter. We use their inverses to solve the equation after each
refinement. The next section shows that the solution norms grow with
the number of refinements, which gives approximate eigenvectors after
normalisation.

Fix a spectral parameter \(z\in\C\setminus\{0\}\).
Let \(Wf=\int_\T f\), \(U\xi=\xi\),
and \(Q=I-UW\) on a cell. Since \(P_m1=0\), the space
\(QL^2(\T)\) is invariant for \(P_m^*\).
Assume that
\[
 G_m=(P_m^*|_{QL^2(\T)}-z)^{-1}
\]
exists for both cells. Set
\begin{equation}\label{eq:cell-alpha-beta}
 b_m=F_m'-m,\quad c_m=R_m1,\quad w_m=P_m^*1,\qquad
 \alpha_m=-G_mw_m,\quad \beta_m=G_mb_m.
\end{equation}
Both \(w_m\) and \(b_m\) have integral zero. Define
\begin{equation}\label{eq:pair-matrix}
 M_m(v)=
 \begin{pmatrix}
  1+\alpha_m(v)&\beta_m(v)\\
  R_m(1+\alpha_m)(v)&1+R_m\beta_m(v)
 \end{pmatrix}.
\end{equation}
Each entry lies in \(L^2(\T)\). This is enough to define all
the matrix moments used below.

\begin{lemma}\label{lem:pair-update}
The stopped finite-depth operators \(D^{(j)}\) have
\((D^{(j)})^*-z\) invertible. For \(j\ge1\), if the forcing is
independent of \(x_j\), and
\[
 u_j=((D^{(j)})^*-z)^{-1}f,\qquad v_j=R^{(j)}u_j,
\]
then, whenever all preceding coordinates lie in \(J\),
\begin{equation}\label{eq:pair-update}
 \binom{u_j}{v_j}=M_{m_j}(x_j)
                         \binom{u_{j-1}}{v_{j-1}}.
\end{equation}
If a preceding coordinate lies outside \(J\), the update matrix
is the identity.
\end{lemma}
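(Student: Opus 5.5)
We prove invertibility and the update together, by induction on \(j\), splitting \(\cH_j=U_j\cH_{j-1}\oplus Q_j\cH_j\). First compute the adjoint of \eqref{eq:finite-d}. We use \(W_j=U_j^*\), \(V_j^*=M_{b_j}U_j\) and the skew-adjointness \((R^{(j-1)})^*=-R^{(j-1)}\). The last property follows inductively from \eqref{eq:finite-r} and Lemma~\ref{lem:periodic-basic}. This gives
\[
 (D^{(j)})^*=P_j^*+U_j(D^{(j-1)})^*W_j-M_{b_j}U_jR^{(j-1)}W_j .
\]
Two cancellations then hold fibrewise in \(x_j\). First, \(W_jP_j^*=0\), because \(P_j1=0\) by \eqref{eq:periodic-constants}. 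Second, \(W_jM_{b_j}U_j=0\), because \(b_j=\partial_{x_j}h_j\) has zero mean in \(x_j\). Consequently \(W_j\bigl((D^{(j)})^*-z\bigr)=\bigl((D^{(j-1)})^*-z\bigr)W_j\), while \(Q_j\) fixes the ranges of \(P_j^*\) and \(M_{b_j}U_j\). Thus \((D^{(j)})^*-z\) is block lower triangular. Its diagonal blocks are \((D^{(j-1)})^*-z\) on \(U_j\cH_{j-1}\) and the direct integral of \(P_j^*|_{Q}-z\) on \(Q_j\cH_j\).

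For invertibility, the first diagonal block is handled by the inductive hypothesis. The base case \(D^{(0)}=P_{m_0}\) has the same triangular structure on \(\C\oplus QL^2(\T)\); there the constant block is \(-z\ne0\) and the other block is inverted by \(G_{m_0}\). For the second block, identify the local cells. By the stopping rule and \eqref{eq:convex-slope}, when all preceding coordinates lie in \(J\) the previous cell has slope \(F'_{m_{j-1}}=-m_{j-1}=m_j\) there. Hence \(S_{j-1}=m_j\), and the local graph is exactly \(F_{m_j}\), whose fibre block is inverted by \(G_{m_j}\). Otherwise \(h_j^{\mathrm{stop}}=0\) and the local graph is an affine line. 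Its double-layer kernel vanishes identically, so the fibre block is \(-z\), with inverse \(-z^{-1}\). The field of fibre inverses takes only these values on measurable sets. It is therefore uniformly bounded and measurable, and it defines a bounded inverse on \(Q_j\cH_j\). A triangular operator with invertible diagonal blocks is invertible.

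For \eqref{eq:pair-update}, take the forcing \(U_jf\). Applying \(W_j\) gives \(W_ju_j=u_{j-1}\), and then \(R^{(j-1)}W_ju_j=v_{j-1}\). Applying \(Q_j\) gives
\[
 (P_j^*-z)Q_ju_j=-P_j^*U_ju_{j-1}+M_{b_j}U_jv_{j-1}.
\]
On the refined set, \(P_j^*1=w_{m_j}\) and \(b_j=b_{m_j}\), while \(u_{j-1}\) and \(v_{j-1}\) are constant in \(x_j\). Hence
\[
 Q_ju_j=\alpha_{m_j}u_{j-1}+\beta_{m_j}v_{j-1},\qquad
 u_j=(1+\alpha_{m_j})u_{j-1}+\beta_{m_j}v_{j-1},
\]
which is the first row of \eqref{eq:pair-update}. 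Next, \eqref{eq:finite-r} gives \(v_j=R_ju_j+U_jv_{j-1}\). Here \(R_j=R_{m_j}\) acts only in \(x_j\), so
\[
 v_j=R_{m_j}(1+\alpha_{m_j})\,u_{j-1}+(1+R_{m_j}\beta_{m_j})\,v_{j-1},
\]
which is the second row. On the stopped set, \(b_j=0\) and \(P_j^*=0\), so \(Q_ju_j=0\) and \(u_j=u_{j-1}\). The affine \(R_j\) is a multiple of \(H_{\T}\) by Remark~\ref{rem:affine-calibration}, so it kills constants in \(x_j\). Hence \(v_j=v_{j-1}\), and the update matrix is the identity.

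I expect the main obstacle to be the bookkeeping rather than any estimate. It consists of checking the adjoint formula with the correct signs, verifying that the local cell on the refined set really has tilt \(m_j\) (not merely slope bounded by \(L\)), and confirming that the affine cells satisfy \(P_j=0\) and \(R_j1=0\). The last two facts make the complement of the refined set transparent.
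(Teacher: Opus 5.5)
Your proof is correct and follows essentially the same route as the paper. You use the triangular splitting \(\cH_j=U_j\cH_{j-1}\oplus\ker W_j\), which you verify on the adjoint via \(W_jP_j^*=0\) and \(W_jM_{b_j}U_j=0\); this is the adjoint form of the paper's \(P_jU_j=0\), \(V_jU_j=0\). You then invert fibrewise by \(G_{m_j}\) or \(-z^{-1}\), solve the mean-zero component to get the first row, and apply \eqref{eq:finite-r} for the second, as the paper does. Your explicit checks that \(S_{j-1}=m_j\) on the refined set and that affine cells have \(P_j=0\) and \(R_j1=0\) fill in details the paper states only briefly.
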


\begin{proof}
We first prove invertibility for the initial cell. Split
\(L^2(\T)\) into constants and mean-zero functions.
Since \(P_m1=0\),
\begin{align*}
 P_m^*-z&=
 \begin{pmatrix}
  -z&0\\ w_m&P_m^*|_{QL^2(\T)}-z
 \end{pmatrix},\\
 (P_m^*-z)^{-1}&=
 \begin{pmatrix}
  -z^{-1}&0\\ z^{-1}G_mw_m&G_m
 \end{pmatrix}.
\end{align*}
Here \(w_m\) in the lower-left block sends a scalar to that
scalar times \(P_m^*1\). Since \(z\ne0\) and the inverse
\(G_m\) on mean-zero functions exists, the displayed inverse
gives the base case of the induction.

For the next level, split
\(\cH_j=U_j\cH_{j-1}\oplus\ker W_j\). The identities
\(P_jU_j=0\), \(V_jU_j=0\), and \eqref{eq:finite-d} show that
\(D^{(j)}\) is upper triangular in this splitting. Its lower
diagonal block is \(Q_jP_jQ_j\), and its upper diagonal block is
\(D^{(j-1)}\). Under the stopping rule, the local lower block is
either one of the two fixed cell compressions or zero.
The local mean-zero adjoint blocks minus \(z\) therefore have a
common inverse bound. The block on the preceding variables is invertible by induction,
so the triangular decomposition proves invertibility at the next
level. The norm of the full inverse may depend on depth.

For the local operator at level \(j\), define \(w_j=P_j^*1\).
We now compute the recursion on the set where all preceding
coordinates lie in \(J\). Here \(w_j=w_{m_j}\),
\(\alpha_j=\alpha_{m_j}\), and \(\beta_j=\beta_{m_j}\), with
the cell functions evaluated at \(x_j\). For a function \(u\)
independent of \(x_j\), the lower component of
\((D^{(j)})^*u\) is
\[
 w_j u-b_jR^{(j-1)}u,
\]
because \(R^{(j-1)}\) is skew-adjoint. Solving this lower
equation gives
\[
 u_j=(1+\alpha_j)u_{j-1}+\beta_jv_{j-1},
 \qquad W_ju_j=u_{j-1}.
\]
Applying \eqref{eq:finite-r} then gives \eqref{eq:pair-update},
including the term \(c_j=R_j1\).
If a preceding coordinate lies outside \(J\), the local graph
is affine. Hence \(P_j=b_j=w_j=c_j=0\), and the auxiliary
singular integral kills the constant extension in the new variable.
The new mean-zero component is zero, so the pair is unchanged.

\end{proof}

\section{Growth under refinement}\label{sec:criterion}

We next give a sufficient condition for solutions of the resolvent equations
to grow with the number of refinements. The condition involves two positive
maps on Hermitian matrices. Let \(\Herm_2\) denote the real vector
space of Hermitian \(2\times2\) matrices. For these matrices,
\(X\ge Y\) means that \(X-Y\) is positive semidefinite, and
\(X>Y\) means that it is positive definite. For \(X\in\Herm_2\), set
\begin{equation}\label{eq:moment-maps}
 K_m(X)=\int_J M_m^*XM_m\dd v,\qquad
 L_m(X)=\int_{\T\setminus J}M_m^*XM_m\dd v.
\end{equation}
Both integrals use horizontal measure, without division by the
length of the interval. The first accounts for the subinterval on
which refinement continues; the second accounts for the part on
which it stops. Both maps preserve positive semidefinite matrices;
this is the positivity used below. Let \(Y_r^m\) be the Hermitian matrix whose quadratic
form is the integral, over the new variables, of the squared norm of
the pair after \(r\) refinements starting with tilt \(m\). Successive integration
of the update in Lemma \ref{lem:pair-update} gives
\begin{equation}\label{eq:moment-recursion}
 Y_0^m=I,\qquad Y_r^m=L_m(I)+K_m(Y_{r-1}^{-m}).
\end{equation}
For \(A=K_aK_{-a}\), positivity implies
\begin{equation}\label{eq:survival-lower-bound}
 Y_{2q}^a\ge A^q(I).
\end{equation}
The recursion \eqref{eq:moment-recursion} includes all points at
which refinement stops. The lower bound retains only those that
undergo every refinement and discards the other nonnegative terms.

\begin{proposition}\label{prop:growth-criterion}
Suppose \(a=1/2\), \(L=7/2\), \(z\ne0\), the two inverses on mean-zero functions in
\eqref{eq:cell-alpha-beta} exist, and for some \(X\in\Herm_2\) and
\(\tau>1\),
\begin{equation}\label{eq:growth-hypothesis}
 \tfrac14 I\le X\le I,\qquad A(X)\ge\tau X.
\end{equation}
Then there are smooth periodic graphs \(F_q=m_qx+h_q(x)\)
with \(\norm{F_q'}_\infty\le9/2\), and mean-zero vectors
\(u_q\in L^2(\T)\), such that
\begin{equation}\label{eq:periodic-quasimodes}
 \norm{u_q}_2=1,\qquad
 \norm{(P_{F_q}^*-z)u_q}_2\longrightarrow0.
\end{equation}
\end{proposition}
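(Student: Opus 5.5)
The plan is to run the stopped refinement of Section~\ref{sec:finite-depth} to depth $2q$ with a fixed forcing, and to show that the solution norms grow geometrically. The growth comes from \eqref{eq:survival-lower-bound} and \eqref{eq:growth-hypothesis}. We then normalise and transfer to smooth graphs using Proposition~\ref{prop:finite-depth} and Lemma~\ref{lem:hard-stop}.

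\emph{Setup.} Fix the two cells with $a=1/2$ and $|F_m'|\le L=7/2$. Choose the initial tilt $m_0$ so that the alternation $m_j=(-1)^jm_0$ composes the moment maps as $A=K_aK_{-a}$, possibly after relabelling which cell starts. Take a smooth mean-zero $f$ on $\T$ that does not vanish identically on $J$, and set $\eta=(P_{m_0}^*-z)f$. This $\eta$ is mean-zero, since $P^*$ preserves mean-zero functions, and $u_0=f$ by the block inverse in Lemma~\ref{lem:pair-update}. Extend $\eta$ constantly in every new coordinate. Lemma~\ref{lem:pair-update} gives invertibility of $(D^{(2q)})^*-z$, and we set $u_{2q}=((D^{(2q)})^*-z)^{-1}\eta$.

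\emph{Quadratic-form identity.} Fix $x_0$ and integrate $|u_{2q}|^2+|v_{2q}|^2$ over $x_1,\dots,x_{2q}$. Iterating the update \eqref{eq:pair-update}, with the identity matrix wherever a coordinate has left $J$, yields the form $\binom{u_0}{v_0}^*Y\binom{u_0}{v_0}$. For $x_0\in J$ the matrix $Y$ dominates $A^q(I)$, by positivity of $K_{\pm a}$, $L_{\pm a}$ and \eqref{eq:survival-lower-bound}. For $x_0\notin J$ it equals $I$. The hypothesis \eqref{eq:growth-hypothesis} gives $A^q(I)\ge A^q(X)\ge\tau^qX\ge\tfrac14\tau^qI$. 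Integrating over $x_0$ therefore gives
\[
 \norm{u_{2q}}_2^2+\norm{R^{(2q)}u_{2q}}_2^2\ \ge\ \tfrac14\tau^q\int_J|f|^2\dd x_0 .
\]
Combined with $\norm{R^{(2q)}}\le C_R(9/2)$ from Lemma~\ref{lem:hard-stop}, this gives $\norm{u_{2q}}_2\ge c\,\tau^{q/2}$ with $c>0$ independent of $q$. Moreover $W_ju_j=u_{j-1}$, so $\int u_{2q}=\int f=0$.

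\emph{Normalisation and transfer.} Set $\tilde u=u_{2q}/\norm{u_{2q}}_2$. Then
\[
 \norm{((D^{(2q)})^*-z)\tilde u}_2=\frac{\norm\eta_2}{\norm{u_{2q}}_2}\le C\tau^{-q/2}.
\]
Now apply Lemma~\ref{lem:hard-stop} together with the last assertion of Proposition~\ref{prop:finite-depth}, taking $\epsilon=\tau^{-q/2}$. This produces a smooth periodic graph $F_q$ with Lipschitz constant at most $L+1=9/2$, and a vector $\cE_{\boldsymbol N}^*\tilde u$ of nearly unit norm and small residual.

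The adjoints of the unfolding maps average over cells, so they preserve integrals exactly, and $\cE_{\boldsymbol N}^*\tilde u$ is mean-zero. We renormalise to unit norm; this changes the residual by at most $O(\epsilon)$, since the operators are uniformly bounded. The resulting $u_q$ give \eqref{eq:periodic-quasimodes}.

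\emph{Main obstacle.} The delicate step is the quadratic-form identity. One must check that the successive integration of \eqref{eq:pair-update} is legitimate when the entries of $M_m$ are only in $L^2$. One must also check that the stopped branches contribute exactly the nonnegative terms $L_m(\cdot)$, and the trivial identity outside $J$ at level zero.

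I would handle integrability inductively. The pair at each level is a finite sum of products of functions of distinct variables, so Fubini applies, and each integration in $x_j$ produces $K$ or $L$ applied to the previous matrix. Positivity of both maps is then used to discard the $L$ terms and bound $Y$ below.
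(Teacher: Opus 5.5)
Your proposal is correct and follows the paper's proof essentially step for step. The shared steps are the starting tilt $m_0=-a$, a fixed mean-zero forcing $(P_{m_0}^*-z)f$ extended constantly in the new variables, the bound $Y_{2q}^a\ge A^q(I)\ge\tfrac14\tau^qI$ integrated against $(f,R_{m_0}f)$ over $J$, the depth-uniform bound on $R^{(2q)}$, and realisation through Lemma~\ref{lem:hard-stop} and Proposition~\ref{prop:finite-depth}. There are two small differences, both valid. First, you allow any mean-zero $f$ with $\int_J|f|^2>0$ instead of a $g$ supported in $J$. Second, you obtain exact mean zero of the realised vector from $\cE_{\boldsymbol N}1=1$, so that $\int\cE_{\boldsymbol N}^*\tilde u=\int\tilde u$. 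The paper instead removes a possible small mean using $|\int v|\le|z|^{-1}\norm{(P_F^*-z)v}_2$. Your shortcut relies on the realising vector being exactly $\cE_{\boldsymbol N}^*\tilde u$, which the proof, though not the statement, of Proposition~\ref{prop:finite-depth} provides.
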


\begin{proof}
Order preservation and \eqref{eq:growth-hypothesis} give
\[
 Y_{2q}^a\ge A^q(I)\ge A^q(X)\ge\tau^q X
                                      \ge\tfrac14\tau^q I.
\]
This bound holds for every initial vector. We can therefore use
one fixed function at every depth. Take \(F_{-a}\) as the initial
cell and choose a nonempty interval \(I_0\Subset J\). The slope
on \(J\) is \(a\), so the first inserted cell has tilt \(a\). Fix
\(g\in C_c^\infty(I_0)\) with
\[
 \norm{g}_2=1,\qquad \int_\T g=0.
\]
For example, take the normalised derivative of a nonconstant smooth
function compactly supported in \(I_0\). Define the forcing
\[
 f=(P_{-a}^*-z)g.
\]
This forcing has integral zero because \(P_{-a}1=0\), and its
solution on the initial cell is \(g\).

For each \(q\), extend \(f\) constantly in the new variables and
let \(u^{(q)}\in\cH_{2q}\) be the adjoint solution after \(2q\) refinements
with the prescribed stopping rule. The initial cell is level zero. Define
\[
 \xi(x_0)=\binom{g(x_0)}{R_{-a}g(x_0)}.
\]
Fixing \(x_0\) and integrating the pair update over the remaining
variables gives
\begin{align*}
 \norm{u^{(q)}}_2^2+\norm{R^{(2q)}u^{(q)}}_2^2
 &=\int_J \xi(x_0)^*Y_{2q}^a\xi(x_0)\dd x_0
     +\int_{\T\setminus J}\norm{\xi(x_0)}_{\C^2}^2\dd x_0.
\end{align*}
For points of the initial cell in \(J\), the matrices above account
for every later refinement. Outside \(J\), all updates are the identity.
Since \(Y_{2q}^a\ge\tau^q I/4\) and \(g\) is supported in
\(I_0\subset J\),
\begin{equation}\label{eq:pair-growth}
 \norm{u^{(q)}}_2^2+\norm{R^{(2q)}u^{(q)}}_2^2
 \ge\frac{\tau^q}{4}\int_J\norm{\xi(x_0)}_{\C^2}^2\dd x_0
 \ge\frac{\tau^q}{4}.
\end{equation}
The bound for the auxiliary operator, which is independent of depth,
therefore gives
\[
 \norm{u^{(q)}}_2^2
 \ge\frac{\tau^q}{4\bigl(1+C_R(9/2)^2\bigr)}.
\]
The forcing \(f\) is fixed, so
\[
 \frac{\norm{((D^{(2q)})^*-z)u^{(q)}}_2}
      {\norm{u^{(q)}}_2}
 \le 2\norm{f}_2\sqrt{1+C_R(9/2)^2}\,\tau^{-q/2}
 \longrightarrow0.
\]

Each \(u^{(q)}\) has mean zero, by the successive averages in
Lemma \ref{lem:pair-update}, or directly from \(D^{(j)}1=0\)
and the mean-zero forcing.
At each fixed depth, apply Lemma \ref{lem:hard-stop} and
Proposition \ref{prop:finite-depth} to the normalised vector,
choosing the cutoffs and periods so that the realisation error
tends to zero as the depth increases. This gives a smooth periodic
graph with slope at most \(9/2\). Realisation may introduce a small
nonzero mean, which we remove using \(P_F1=0\) and the estimate
\[
 \left|\int_\T v\right|
    \le |z|^{-1}\norm{(P_F^*-z)v}_2.
\]
Subtract the mean and normalise the resulting vector. The uniform
bound on \(\norm{P_F^*1}_2\) ensures that the residual still tends
to zero. This proves \eqref{eq:periodic-quasimodes}.
\end{proof}

\section{Two fixed cells}\label{sec:local-data}

We now construct the two cells required by
Proposition \ref{prop:growth-criterion}. We first define a piecewise
polynomial cell and establish a strict matrix inequality. Smoothing
then preserves this inequality and gives two fixed smooth cells.
Appendix \ref{cert:appendix} contains the estimates for the
polynomial cell.

Throughout this section, \(z_0=-\ii/2\). The certified inequality
and the smoothing argument are first established at \(z_0\).
We then vary the spectral parameter while keeping both smooth
cells fixed.

Set \(p=3/4\), \(\delta=1/16\), and
\[
 \Theta(t)=\frac{35t-35t^3+21t^5-5t^7}{16},\qquad -1\le t\le1.
\]
Define the periodic function \(w_0\) by the following formula, with
the rising transition taken across \(0\):
\[
 w_0(x)=
 \begin{cases}
  (1+\Theta(x/\delta))/2,&-\delta\le x\le\delta,\\
  1,&\delta\le x\le p-\delta,\\
  (1-\Theta((x-p)/\delta))/2,&p-\delta\le x\le p+\delta,\\
  0,&p+\delta\le x\le1-\delta.
 \end{cases}
\]
Let \(F(0)=0\) and \(F'=7/2-4w_0\). Since
\(\int_\T w_0=p\), the function \(h(x)=F(x)-x/2\) is periodic.
The identity \(\Theta'(t)=35(1-t^2)^3/16\) shows that \(F\in C^4\)
and
\[
 -\tfrac12\le F'\le\tfrac72,\qquad
 F'=-\tfrac12\ \hbox{on }[1/16,11/16].
\]
Choose the subinterval
\begin{equation}\label{eq:chosen-child}
 J=(5/64,43/64).
\end{equation}
The interval \(J\) lies inside the interval of constant slope, at distance
\(1/64\) from each endpoint. This leaves room to smooth the cell
without changing its slope on \(J\).
For \(F\) and its reflection \(-F\), the subscripts \(+\) and \(-\)
on cell operators and resolvent data denote the tilts \(m=1/2\)
and \(m=-1/2\), respectively. We omit the subscript for \(m=1/2\)
on \(\alpha,\beta,M,K\).

\begin{lemma}[Local matrix inequality]\label{lem:certified-cell}
For the \(C^4\) cell just defined and its reflection \(-F\), the
resolvents on mean-zero functions at \(z=-\ii/2\) exist and have norms less
than \(40\). The maps \eqref{eq:moment-maps} for the exact graph
operators satisfy
\begin{equation}\label{eq:certified-growth}
 A(X)\ge\frac{21}{20}X+\frac{49}{4950}I,\qquad
 X=\begin{pmatrix}1/3&37\ii/200\\-37\ii/200&37/40\end{pmatrix},
 \qquad \tfrac14 I<X<I.
\end{equation}
\end{lemma}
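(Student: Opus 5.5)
The lemma is a computer-assisted statement, so the plan is to reduce it to finitely many rigorously enclosed quantities and a final exact check. Three facts are needed: invertibility of \(P_\pm^*-z_0\) on mean-zero functions with a quantitative bound, enclosures of the moment maps \(K_\pm\), and the matrix inequality \eqref{eq:certified-growth}.

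\emph{Symmetry reduction.} I would first reduce to the single cell \(F\). The reflection \(F\mapsto -F\) changes the sign of \(p_F\) and leaves \(r_F\) unchanged. Hence \(P_-=-P_+\) and \(R_-=R_+\).
\begin{itemize}
\item At \(z_0=-\ii/2\), complex conjugation turns \(G_-=(-P_+^*-z_0)^{-1}\) into \(-\overline{G_+}\), since \(P_+\) has a real kernel.
\item With \(b_-=-b_+\) and \(w_-=-w_+\), this gives \(\alpha_-=\overline{\alpha_+}\) and \(\beta_-=-\overline{\beta_+}\).
\item Therefore \(M_-=\operatorname{diag}(1,-1)\,\overline{M_+}\,\operatorname{diag}(1,-1)\).
\end{itemize}
Consequently \(K_-\) is \(K_+\) conjugated by an explicit involution of \(\Herm_2\). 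Only the data \(\alpha,\beta,R\alpha,R\beta\) for one cell must be computed.

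\emph{Enclosing the resolvent data.} I would discretise \(P_+\) and \(R_+\) on a Fourier (or Nyström) basis of dimension \(n\). By Lemma~\ref{lem:periodic-basic}, \(P_+\) has a smooth kernel, since \(F\in C^4\). Its truncation error would be bounded from explicit derivative bounds of \(B\Phi(Q)\) and from the remote-image tails \eqref{eq:periodic-leading-tails}. For \(R_+\), I would split off \(-\tfrac12M_{\Phi(s)}H_{\T}\) as in \eqref{eq:hilbert-principal}; the Hilbert transform is diagonal in Fourier space, and the remainder is again a bounded smooth-ish kernel.

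Next, compute approximate solutions \(\tilde\alpha,\tilde\beta\) in exact rational arithmetic. Then bound the residuals \(\|(P_+^*-z_0)\tilde\alpha+w\|_2\) and \(\|(P_+^*-z_0)\tilde\beta-b\|_2\) with interval arithmetic.

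Invertibility with \(\|G_+\|<40\) would come from a Neumann-type perturbation argument. Let \(\Pi_n\) be the Galerkin projection. Certify a bound on the inverse of \(\Pi_nP_+^*\Pi_n-z_0\) on mean-zero modes, for instance through a smallest-singular-value bound via an exact Cholesky factorisation of a shifted Gram matrix. Then absorb \(\|P_+-\Pi_nP_+\Pi_n\|\), which is small by compactness and made explicit by kernel smoothness. The residual bounds and \(\|G_+\|<40\) then give \(L^2\) enclosures of \(\alpha-\tilde\alpha\) and \(\beta-\tilde\beta\). Applying the bounded \(R_+\) controls \(R\alpha-R\tilde\alpha\) and \(R\beta-R\tilde\beta\).

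\emph{Moments and the final inequality.} Each entry of \(K_+(X)\) is an integral over \(J\) of products of entries of \(M_+\). For the approximate data these are computable exactly or by interval quadrature. Errors are bounded by Cauchy--Schwarz, using the \(L^2\) enclosures of the entries and of their perturbations. Since \(A\) is linear, this yields an interval enclosure of the Hermitian matrix
\[
 A(X)-\tfrac{21}{20}X-\tfrac{49}{4950}I .
\]
Positive semidefiniteness would then be verified by checking the diagonal entry and the determinant with rational lower bounds. The last claim, \(\tfrac14I<X<I\), is an exact \(2\times2\) eigenvalue check.

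\emph{Main obstacle.} I expect the hardest step to be the error control for the auxiliary operator \(R_+\) and the products in \(K\). \(R_+\) is singular, and \(F\) is only \(C^4\) at the four transition points, so Fourier truncation converges only algebraically. Quadratic moments also amplify \(L^2\) errors, roughly by \(\|M\|\) times the error, and a margin of \(49/4950\) must survive this. To handle it, I would first try placing the discretisation nodes adapted to the transition intervals \([\pm\delta]\) and \([p\pm\delta]\). I would also use exact Taylor-based kernel bounds there, rather than global derivative bounds.
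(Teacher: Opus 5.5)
\textbf{Gap in the symmetry reduction.} Your reduction to one cell has a sign error, and as written it would make the final check certify the wrong map. From \(G_-=-\overline{G_+}\) and \(b_-=-b_+\) one gets
\(\beta_-=G_-b_-=\overline{G_+}\,b_+=\overline{\beta_+}\), not \(-\overline{\beta_+}\).

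Together with \(\alpha_-=\overline{\alpha_+}\) and the real kernel of \(R_-=R_+\), this gives \(M_-=\overline{M_+}\). Hence \(K_-(Y)=\overline{K_+(\overline{Y})}\). In the coordinates \((a,b,c,d)\) on \(\Herm_2\) this map is \(C\mathbf K C\) with \(C=\operatorname{diag}(1,1,1,-1)\), which is exactly what the paper uses.

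Your formula \(M_-=\Sigma\overline{M_+}\Sigma\), with \(\Sigma=\operatorname{diag}(1,-1)\), does not even follow from your own \(\beta_-\). Because of the constant \(1\), the lower row \(R(1+\alpha_-)\), \(1+R\beta_-\) does not acquire the signs that conjugation by \(\Sigma\) would impose. That formula would give \(K_-(Y)=\Sigma\,\overline{K_+(\overline{\Sigma Y\Sigma})}\,\Sigma\), which is a different map in general. Your enclosure of \(A(X)-\frac{21}{20}X-\frac{49}{4950}I\) would then concern a different composition and would not prove the lemma. The structural point, that one cell's data suffices, survives once the involution is corrected to entrywise conjugation.

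\textbf{Comparison with the paper's route.} With that corrected, your plan has the same architecture as the paper's appendix:
\begin{itemize}
\item mean-zero approximants with certified residuals;
\item an inverse bound that converts residuals into \(L^2\) errors;
\item an explicit bound on \(\norm{R}\) for the lower row (the paper proves \(\norm R<10\) by Schur's test after removing the Hilbert term);
\item exact Gram moments on \(J\);
\item a Cauchy--Schwarz perturbation of \(K\) and \(A\), using positivity to get \(\norm{\widetilde K}=\norm{\widetilde K(I)}\).
\end{itemize}

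The real difference is how the inverse is bounded. You propose a certified smallest singular value for a Galerkin compression, then absorbing \(\norm{P-\Pi_nP\Pi_n}\). That error must be smaller than the reciprocal of the compressed inverse norm. For this \(C^4\) cell the periodised kernel of \(P\) is not smooth; the paper only proves \(|\partial_xK_P|<500\) and \(|\partial_yK_P|<900\). So your route forces a fine discretisation and a large certified singular-value computation.

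The paper avoids this by exploiting that \(P\) is real and \(z_0=-\ii/2\). Then \((P^*+\ii/2)(P^*-\ii/2)=P^{*2}+\frac14\), so \(\norm{P^2}<\frac14\) already gives \(\norm{(P^*+\ii/2)^{-1}}\le(\norm P+\frac12)/(\frac14-\norm{P^2})<40\). This needs only Hilbert--Schmidt norms of a 4096-point midpoint step kernel \(B\) and of \(B^2\), and it tolerates \(\norm{P-B}_{\HS}<0.093\). Your route is more general, since it does not use the special position of \(z_0\), and it could give a sharper bound. The paper's is much cheaper.

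For the residuals, the paper also does not use Fourier truncation. It uses piecewise Chebyshev approximants and exact polynomial-division kernel enclosures on a partition refined at the four joins. It adds exact tail moments and an exact three-panel principal value with rational logarithms. This is how it handles the obstacle you anticipated for \(R\).
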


\begin{proof}
Appendix \ref{cert:appendix} proves
\(\norm{(P_F^*+\ii/2)^{-1}}<40\) and \(\norm{R_F}<10\).
Here the inverse acts on the full space \(L^2(\T)\); its restriction
to mean-zero functions is \(G_{1/2}\).
The appendix also bounds the residuals of two mean-zero polynomial functions
approximating \(\alpha,\beta\), together with their images under
the auxiliary operator.
For \(2\times2\) matrices, \(\norm{\cdot}_{\HS}\) denotes the
Hilbert--Schmidt (Frobenius) norm.
The resulting polynomial matrix \(\widetilde M\) satisfies
\[
 \norm{M-\widetilde M}_{L^2(\T;\HS)}<\frac1{3000},
 \qquad \int_J\norm{\widetilde M}_{\HS}^2<4.
\]
The tilded maps below are formed from the polynomial matrices
by the same integrals and composition as the exact maps. Exact
integration over the rational interval \(J\) gives
\[
 \widetilde K(I)<\tfrac72I,\qquad
 \widetilde A(X)-\tfrac{21}{20}X>\tfrac1{50}I.
\]
Equip \(\Herm_2\) with the matrix operator norm. Norms of linear
maps on this space are induced by that norm. Cauchy--Schwarz gives
\[
 \norm{K-\widetilde K}
    \le\frac{4+1/3000}{3000}<\frac1{700},\qquad
 \norm{A-\widetilde A}
    <\frac{7+1/700}{700}<\frac1{99}.
\]
The same bounds hold for both tilts, since reflection gives
\(P_-=-P_+\), \(R_-=R_+\), and
\(M_-=\overline{M_+}\). For operators, conjugation means
\(\overline G f=\overline{G(\overline f)}\). The identity for \(M\)
follows from \(G_-=-\overline{G_+}\), \(b_-=-b_+\), and
\(w_-=-w_+\).
Since~\(\norm X<1\), the remaining margin is
\(1/50-1/99=49/4950\), proving the first assertion of
\eqref{eq:certified-growth}. For the last assertion, the leading
diagonal entries of \(X-I/4\) and \(I-X\) are positive, and
their determinants are respectively \(881/40000\) and
\(631/40000\).
\end{proof}

\begin{lemma}\label{lem:fixed-smoothing}
There are two fixed \(C^\infty\) cells \(F_m=mv+h_m(v)\),
\(m=\pm1/2\), with slopes bounded by \(7/2\) and slope
\(-m\) on a neighbourhood of \(J\), whose resolvents on
mean-zero functions at \(z_0\) have norm less than \(80\), and whose
maps at \(z_0\) satisfy
\[
 K_{1/2}K_{-1/2}(X)
       \ge\tfrac{21}{20}X+\tfrac{49}{9900}I.
\]
\end{lemma}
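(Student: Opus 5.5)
Fix the $C^4$ cell $F$ of Lemma~\ref{lem:certified-cell} and mollify it. Let $\phi_\epsilon$ be a smooth even mollifier supported in $(-\epsilon,\epsilon)$ with $\epsilon<1/64$, and set $F^\epsilon{}'=\phi_\epsilon*F'$. Since $\phi_\epsilon$ has unit mass and $F'$ is $1$-periodic with mean $1/2$, the function $h^\epsilon(v)=F^\epsilon(v)-v/2$ is periodic. The bounds $-1/2\le F'\le 7/2$ pass to the convolution, so the slope bound $7/2$ holds. Because $F'=-1/2$ on $[1/16,11/16]$ and $J$ sits at distance $1/64$ from the endpoints, the mollified slope equals $-1/2$ on the $(1/64-\epsilon)$-neighbourhood of $J$. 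Take $F_{1/2}=F^\epsilon$ and $F_{-1/2}=-F^\epsilon$. Reflection gives the same statements for the second cell, and it keeps the symmetries $P_-=-P_+$, $R_-=R_+$, $M_-=\overline{M_+}$ used in the certified lemma.

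Next let $\epsilon\to0$. Since $F\in C^4$, we have $h^\epsilon\to h$ in $C^2(\T)$, so Lemma~\ref{lem:smooth-continuity} gives $P_{F^\epsilon}\to P_F$ and $R_{F^\epsilon}\to R_F$ in operator norm. The compression $P^*|_{QL^2}$ therefore converges in norm. A Neumann-series perturbation of the resolvent shows that $G^\epsilon$ exists for small $\epsilon$ and converges in norm to $G$, with $\norm{G^\epsilon}<80$; the limit has norm less than $40$, so there is ample slack. The data also converge in $L^2(\T)$:
\begin{itemize}
\item $b^\epsilon=F^\epsilon{}'-1/2\to b$;
\item $w^\epsilon=P_{F^\epsilon}^*1\to w$;
\item $\alpha^\epsilon=-G^\epsilon w^\epsilon\to\alpha$ and $\beta^\epsilon\to\beta$;
\item $R_{F^\epsilon}(1+\alpha^\epsilon)\to R_F(1+\alpha)$ and $R_{F^\epsilon}\beta^\epsilon\to R_F\beta$.
\end{itemize}
Hence $M^\epsilon\to M$ in $L^2(\T;\HS)$, and likewise for the reflected cell.

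The moment maps depend quadratically on $M$. Cauchy--Schwarz, as in the proof of Lemma~\ref{lem:certified-cell}, gives
\[
\norm{K^\epsilon-K}\le \norm{M^\epsilon-M}_{L^2(J;\HS)}\bigl(\norm{M^\epsilon}_{L^2(J;\HS)}+\norm{M}_{L^2(J;\HS)}\bigr)\to0,
\]
and therefore $\norm{A^\epsilon-A}\to0$, using the uniform bounds $\norm{K}\le 7/2+o(1)$. Since $\norm{X}<1$, we choose $\epsilon$ so small that $\norm{A^\epsilon-A}<49/9900$. Then
\[
A^\epsilon(X)\ge A(X)-\tfrac{49}{9900}I\ge \tfrac{21}{20}X+\tfrac{49}{9900}I.
\]
This fixed $\epsilon$ defines the two smooth cells.

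The main technical point is the resolvent convergence. The resolvent lives on the mean-zero subspace $QL^2$, which is independent of $\epsilon$ and invariant for each $P^*_{F^\epsilon}$ because $P_{F^\epsilon}1=0$. Norm convergence of the compressions is therefore immediate, and the rest is routine continuity. We only have to confirm that $C^2$ convergence of $h^\epsilon$ suffices for Lemma~\ref{lem:smooth-continuity}, which holds since $F'\in C^3$.
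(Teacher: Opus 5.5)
Your proposal is correct and takes essentially the same route as the paper's proof. You mollify the certified $C^4$ cell with $\epsilon<1/64$, and mollifying $F'$ is equivalent to the paper's mollification of $h$; Lemma~\ref{lem:smooth-continuity} plus a Neumann-series resolvent perturbation then give $M^\epsilon\to M$ in $L^2(\T;\HS)$ and norm convergence of the moment maps, you reflect for the second cell, and you spend half of the certified margin $49/4950$.
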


\begin{proof}
Take a nonnegative smooth even approximate identity
\(\rho_\epsilon\) on \(\T\), supported in
\((-\epsilon,\epsilon)\), and replace \(h\) by
\(h_\epsilon=\rho_\epsilon*h\).
For \(\epsilon<1/64\), the slope on \(J\) remains \(-1/2\),
and convolution preserves the slope interval
\([-1/2,7/2]\). The cell \(F_\epsilon(x)=x/2+h_\epsilon(x)\)
converges to \(F\) in \(C^2\). Thus \(P_{F_\epsilon}\to P_F\) and
\(R_{F_\epsilon}\to R_F\) in operator norm.
The resolvent identity gives invertibility and a norm bound of
less than \(80\) for all sufficiently small \(\epsilon\).
It follows from \eqref{eq:cell-alpha-beta} and
\eqref{eq:pair-matrix} that \(M_\epsilon\to M\) in
\(L^2(\T;\HS)\), and hence that their moment maps converge
in norm. Reflect the smoothed cell to obtain the cell with tilt \(-1/2\).
The strict margin in \eqref{eq:certified-growth} gives the
claimed inequality for a sufficiently small \(\epsilon\), which
we now fix. Thus the two cells are independent of the depth.
\end{proof}

\begin{lemma}[Stability in the spectral parameter]\label{lem:parameter-stability}
For the fixed smooth cells in Lemma~\ref{lem:fixed-smoothing},
there exists \(\varepsilon\in(0,1/160)\) such that, whenever
\(|z-z_0|<\varepsilon\), both mean-zero resolvents \(G_m(z)\)
exist and
\begin{equation}\label{eq:stable-growth}
 \norm{G_m(z)}<160,\qquad
 A_z(X)\ge\frac{21}{20}X+\frac{49}{19800}I.
\end{equation}
Here \(X\) is the matrix in \eqref{eq:certified-growth}, and
\(A_z=K_{1/2,z}K_{-1/2,z}\) is formed from
\eqref{eq:cell-alpha-beta}--\eqref{eq:moment-maps} at parameter \(z\).
\end{lemma}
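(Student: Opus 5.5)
This is a perturbation argument in \(z\) with the two smooth cells of Lemma~\ref{lem:fixed-smoothing} held fixed. The operators \(P_m^*\) and \(R_m\) do not depend on \(z\); only the mean-zero resolvents \(G_m(z)\), and through them \(\alpha_m,\beta_m,M_m\), vary. We first obtain the resolvent bound, then show that the matrix functions \(M_m\) depend continuously on \(z\) in \(L^2(\T;\HS)\) with an explicit modulus, and finally transfer the strict margin \(49/9900\) at \(z_0\) to \(z\) by bounding \(\norm{A_z-A_{z_0}}\) on \(\Herm_2\).

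\emph{Step 1 (resolvents).} Write \(B_m=P_m^*|_{QL^2(\T)}\). Then
\[
 B_m-z=(B_m-z_0)\bigl(I-(z-z_0)G_m(z_0)\bigr).
\]
By Lemma~\ref{lem:fixed-smoothing}, \(\norm{G_m(z_0)}<80\). If \(|z-z_0|<\varepsilon\le1/160\), then \(|z-z_0|\,\norm{G_m(z_0)}<1/2\), and a Neumann series gives
\[
 \norm{G_m(z)}\le\frac{80}{1-1/2}=160.
\]
A slightly smaller \(\varepsilon\), or the strict inequality, gives \(<160\). The resolvent identity \(G_m(z)-G_m(z_0)=(z-z_0)G_m(z)G_m(z_0)\) then yields
\[
 \norm{G_m(z)-G_m(z_0)}\le 160\cdot80\,|z-z_0|.
\]

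\emph{Step 2 (matrix entries).} The functions \(w_m=P_m^*1\) and \(b_m=F_m'-m\) are fixed \(L^2\) vectors, since the smooth cells are fixed. Hence
\[
 \norm{\alpha_m(z)-\alpha_m(z_0)}_2,\ \norm{\beta_m(z)-\beta_m(z_0)}_2\le C|z-z_0|,
\]
with \(C\) depending only on the fixed cells. Applying the bounded operator \(R_m\), for which Lemma~\ref{lem:periodic-basic} gives a bound depending only on the slope \(7/2\), controls the second row of \eqref{eq:pair-matrix} in the same way. Therefore
\[
 \norm{M_{m,z}-M_{m,z_0}}_{L^2(\T;\HS)}\le C'|z-z_0|,
\]
and \(\norm{M_{m,z}}_{L^2(\T;\HS)}\) is uniformly bounded for \(|z-z_0|\le1/160\).

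\emph{Step 3 (transferring the margin).} The estimate follows the one in the proof of Lemma~\ref{lem:certified-cell}. Cauchy--Schwarz applied to \(M^*YM-\widetilde M^*Y\widetilde M\) gives
\[
 \norm{K_{m,z}-K_{m,z_0}}\le\bigl(\norm{M_z}+\norm{M_{z_0}}\bigr)\norm{M_z-M_{z_0}},
\]
with \(L^2(\T;\HS)\) norms. Writing \(A_z-A_{z_0}\) as a telescoped difference of the two compositions bounds \(\norm{A_z-A_{z_0}}\le C''|z-z_0|\). Since \(\norm X<1\), we have \((A_z-A_{z_0})(X)\ge-C''|z-z_0|\,I\). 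Choosing \(\varepsilon\) so that \(C''\varepsilon\le49/19800\) then gives
\[
 A_z(X)\ge\tfrac{21}{20}X+\Bigl(\tfrac{49}{9900}-\tfrac{49}{19800}\Bigr)I=\tfrac{21}{20}X+\tfrac{49}{19800}I,
\]
which is \eqref{eq:stable-growth}.

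The only real obstacle is bookkeeping: each constant must depend only on the fixed smooth cells and not on \(z\). Since the statement asks only for existence of \(\varepsilon\), no explicit numerics are needed. It is enough to observe that \(z\mapsto M_{m,z}\in L^2(\T;\HS)\) is continuous, indeed Lipschitz, near \(z_0\), and that the maps \(K\) and \(A\) depend continuously on \(M\).
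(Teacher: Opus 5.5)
Your proposal is correct and follows essentially the same route as the paper. Both arguments use the Neumann-series bound \(\norm{G_m(z)}<160\) for \(|z-z_0|<1/160\), then norm continuity of \(G_m(z)\), hence of \(M_m(z)\) in \(L^2(\T;\HS)\), then the Cauchy--Schwarz estimate for the moment maps, and finally absorb half of the margin \(49/9900\). The only difference is cosmetic: you record an explicit Lipschitz modulus and bound the full operator norm \(\norm{A_z-A_{z_0}}\), whereas the paper uses continuity and estimates \(A_z(X)-A_{z_0}(X)\) directly.
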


\begin{proof}
For \(|z-z_0|<1/160\), the bound
\(\norm{G_m(z_0)}<80\) and the Neumann series give
\[
 G_m(z)=\bigl[I-(z-z_0)G_m(z_0)\bigr]^{-1}G_m(z_0),
 \qquad
 \norm{G_m(z)}\le
 \frac{\norm{G_m(z_0)}}{1-|z-z_0|\norm{G_m(z_0)}}<160.
\]
Thus \(G_m(z)\) varies continuously in operator norm. The cell
operators and the functions \(b_m,w_m\) are fixed, so
\(\alpha_m(z),\beta_m(z)\) and their images under \(R_m\)
vary continuously in \(L^2(\T)\). Consequently
\(M_m(z)\) varies continuously in \(L^2(\T;\HS)\).
For two such matrix functions \(M,N\) and \(Y\in\Herm_2\),
Cauchy--Schwarz gives
\[
 \left\|\int_J(M^*YM-N^*YN)\dd v\right\|
 \le\norm Y\bigl(\norm M_{L^2(J;\HS)}+\norm N_{L^2(J;\HS)}\bigr)
             \norm{M-N}_{L^2(J;\HS)}.
\]
Hence the moment maps, and their composition \(A_z\), vary
continuously in norm. Choose \(\varepsilon\in(0,1/160)\) so small
that \(\norm{A_z(X)-A_{z_0}(X)}<49/19800\) whenever
\(|z-z_0|<\varepsilon\). The inequality in
Lemma~\ref{lem:fixed-smoothing} then gives \eqref{eq:stable-growth}.
\end{proof}

Fix \(t\in[1/2,1/2+\varepsilon)\) and \(z=-\ii t\) for the
remainder of the main proof. Lemma~\ref{lem:parameter-stability}
verifies the hypotheses of Proposition~\ref{prop:growth-criterion}
with \(\tau=21/20\). The geometric cell slopes remain \(\pm1/2\).

With this parameter and the smoothed cells fixed, choose \(g\) and
its forcing once.
At each depth,
approximate the stopping indicators and choose the geometric periods
from coarse to fine, as in Proposition \ref{prop:finite-depth}.
Remove the mean and normalise to obtain an approximate eigenvector
on one period.
For this graph and vector, the next section chooses a small interval
of nonzero Floquet phases and truncates the resulting vector on
the full graph. Section \ref{sec:gluing} then chooses the taper
radius and the scale at which the graph is inserted into a bounded
boundary. The slope and operator bounds remain uniform. Higher
derivatives, inverse norms at a fixed depth, and the phase interval
may depend on the preceding choices.

\section{From one period to the full graph}\label{sec:packets}

The approximate eigenvectors on one period lead to the following
spectral result for a single periodic Lipschitz graph.

\begin{theorem}[A periodic graph]\label{thm:periodic-graph}
There exists \(\varepsilon>0\) such that, for every
\(t\in[1/2,1/2+\varepsilon)\), there is a real \(1\)-periodic
Lipschitz function \(F\), with \(\Lip(F)\le27/2\), such that
\[
 \pm\ii t\in\essspec(D_{\Gamma_F};L^2(\Gamma_F,\dd\sigma)),
 \qquad \Gamma_F=\{(x,F(x)):x\in\R\}.
\]
The normal to the graph points upwards.
\end{theorem}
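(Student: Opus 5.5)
The plan is to pass from the one-period quasimodes of Proposition~\ref{prop:growth-criterion} to compactly supported quasimodes on full graphs, and then to glue rescaled pieces of successive graphs into one periodic Lipschitz function. Fix \(t\) and \(z=-\ii t\), with \(\varepsilon\) as in Lemma~\ref{lem:parameter-stability}. Proposition~\ref{prop:growth-criterion} then gives smooth periodic graphs \(F_q=m_qx+h_q\) with \(\norm{F_q'}_\infty\le9/2\) and mean-zero unit vectors \(u_q\) with \(\norm{(P_{F_q}^*-z)u_q}_2\to0\).

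\textbf{Step 1: Floquet packets.} By Lemma~\ref{lem:floquet}, for small \(\theta>0\) the fibre \((P^\theta)^*\) is norm-close to \((P^{0,+})^*=P_{F_q}^*-\frac{\ii}{2(1+m^2)}M_bUW\). On mean-zero \(u_q\) the rank-one term vanishes, since \(Wu_q=0\). Hence \(\norm{((P^\theta)^*-z)u_q}_2\) is small uniformly for \(\theta\in(0,\theta_q)\). I would take the full-line function \(\psi=\widetilde{\cB}^{-1}\bigl(\phi(\theta)u_q\bigr)\), where \(\phi\) is a normalised smooth bump supported in \((0,\theta_q)\). Unitarity of the Floquet transform gives \(\norm{\psi}_2=1\) and a small residual for \(\mathsf P_{F_q}^*-z\). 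Because \(\phi\) is smooth, \(\psi\) decays rapidly in \(x\), at a rate depending on \(\theta_q\). Truncating \(\psi\) to \(|x|\le R_q\) and applying the uniform bound \eqref{eq:uniform-line-bounds} keeps the residual small. This produces compactly supported \(\psi_q\) with \(\norm{\psi_q}=1\) and \(\norm{(\mathsf P_{F_q}^*-z)\psi_q}\to0\), which I expect is Lemma~\ref{lem:full-line-packets}.

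\textbf{Step 2: gluing into one periodic graph.} I would choose disjoint intervals \(I_q\subset(0,1)\) accumulating at \(0\). On \(I_q\) the function \(F\) equals a rescaled copy \(\lambda_qF_q(\cdot/\lambda_q)\) of a segment \([-S_q,S_q]\) with \(S_q\gg R_q\). Near the ends of the segment the graph is tapered by a linear cutoff to the horizontal axis, and \(F=0\) elsewhere; finally \(F\) is extended \(1\)-periodically. Tapering a function of slope at most \(9/2\) and height growing linearly with tilt \(|m_q|\le1/2\) (plus bounded \(h_q\)) can raise the slope; the bound \(27/2=3\cdot9/2\) suggests choosing the cutoff length comparable to the segment length, so the product-rule term stays at most \(2\cdot9/2\). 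Dilation invariance of the double layer lets me transfer \(\psi_q\) to the rescaled copy without changing its norm. The error then splits into two parts. The first comes from the parts of \(\Gamma_F\) within distance comparable to \(S_q\lambda_q\) of the support that differ from \(\Gamma_{F_q}\). Since the source points lie at distance of order \((S_q-R_q)\lambda_q\), this kernel difference is small in Hilbert--Schmidt norm, roughly \((R_q/S_q)^{1/2}\) after using the mean-zero structure or simply the \(1/\text{dist}\) decay. The second comes from far pieces, including the periodic copies and the other insertions, which are controlled by taking \(\lambda_q\to0\) fast. Converting to arclength with \eqref{eq:physical-adjoint} then gives physical densities \(\rho_q\) with \(\norm{\rho_q}\asymp1\) and \(\norm{(D_{\Gamma_F}^*-z)\rho_q}\to0\).

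\textbf{Step 3: conclusion.} The \(\rho_q\) have disjoint supports, so after passing to a subsequence they converge weakly to \(0\). Such a singular Weyl sequence shows that \(D^*_{\Gamma_F}+\ii t\) is not Fredholm; if it were, a parametrix \(B\) would give \(\rho_q=B(D^*+\ii t)\rho_q-K\rho_q\to0\) in norm. Taking adjoints, \(-\ii t\cdot(-1)\) conjugated, i.e.\ \(\ii t\), lies in \(\essspec(D_{\Gamma_F})\). Since the kernel is real, complex conjugation gives \(-\ii t\) as well.

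The main obstacle is Step~2's error control. The double-layer kernel decays only like \(1/\text{dist}\), so the contribution of a modified boundary at distance \(d\) from a unit density supported on length \(\ell\) is of order \((\ell/d)^{1/2}\) only if the modification has bounded length relative to \(d\). I would handle this by taking \(S_q/R_q\to\infty\) and exploiting local flatness near the origin, splitting the influence region dyadically.
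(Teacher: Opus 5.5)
Your proposal is correct and follows essentially the paper's proof. Step~1 is Lemma~\ref{lem:full-line-packets}, and Steps~2--3 are Lemma~\ref{lem:gluing} with the periodic extension in Section~\ref{sec:gluing}. The paper uses an indicator of a phase interval rather than a smooth bump, and truncates using only \(L^2\) density and the uniform operator bound, so no decay rate is needed. It uses the same linear taper on \([R,2R]\), which gives \(\Lip(F)\le3\cdot9/2\).

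The obstacle you flag at the end is not real. Since \(\rho_q\) is supported where \(\Gamma_F\) coincides with the rescaled \(\Gamma_{F_q}\), and \(D^*\rho_q(X)\) sees the boundary near \(X\) only through the target point and its normal, the residual on the untapered window is exactly the rescaled full-line residual. At every other target (tapers, flat parts, other insertions, other periods) you have the pointwise bound \(|D_{\Gamma_F}^*\rho_q(X)|\le C\lambda_q^{1/2}\norm{\psi_q}_{L^1}/|x-c_q|\), where \(c_q\) is the centre of \(I_q\). Its square integrates over the whole complement of the window to at most \(C\norm{\psi_q}_{L^1}^2/S_q\lesssim R_q/S_q\), whatever the length or shape of that part. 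So \(S_q/R_q\to\infty\) alone suffices; local flatness, dyadic splitting and smallness of \(\lambda_q\) are not needed for this estimate.
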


The graph in this theorem contains rescaled segments from successively
refined smooth graphs within one period. We complete its construction
in Section~\ref{sec:gluing}, together with the bounded domain.
First we transfer the approximate eigenvectors on one period to
compactly supported densities on the full smooth graphs.
The adjoint fibre operators in Lemma~\ref{lem:floquet} have a
rank-one jump at zero phase. This term vanishes on mean-zero
densities, so phases sufficiently close to zero preserve their
small residuals.

\begin{lemma}\label{lem:full-line-packets}
The vectors in \eqref{eq:periodic-quasimodes} give densities
\(\varphi_q\in L^2(\Gamma_{F_q},\dd\sigma)\), with
\(\norm{\varphi_q}_2=1\), such that
\begin{equation}\label{eq:full-line-quasimodes}
 \norm{(D_{\Gamma_{F_q}}^*-z)\varphi_q}_2\longrightarrow0.
\end{equation}
The densities may be chosen with compact horizontal support.
\end{lemma}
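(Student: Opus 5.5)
Fix \(q\) and write \(F=F_q\), \(u=u_q\), \(\epsilon_q=\norm{(P_F^*-z)u}_2\). We build a Floquet wave packet from \(u\) and then truncate it. Choose a small \(\theta_0>0\) and a smooth bump \(\chi\) on \((0,\theta_0)\), normalised in \(L^2(\dd\theta/(2\pi))\). In gauged fibre coordinates, define \(\Psi(\theta,\cdot)=\chi(\theta)u\) and let \(\phi=\widetilde{\cB}^{-1}\Psi\in L^2(\R,\dd x)\). By unitarity, \(\norm{\phi}_2=1\). Since \(\widetilde{\cB}\) intertwines the full-line operator with its fibres,
\[
 \norm{(\mathsf P_F^*-z)\phi}_2^2=\int|\chi(\theta)|^2\norm{((P^\theta)^*-z)u}_2^2\,\frac{\dd\theta}{2\pi}.
\]
By Lemma~\ref{lem:floquet}, \((P^\theta)^*\) converges in norm to \((P^{0,+})^*=P_F^*-\tfrac{\ii}{2(1+m^2)}M_bUW\) as \(\theta\downarrow0\). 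The rank-one term contains \(W\), so it annihilates the mean-zero vector \(u\). Hence \(\sup_{0<\theta<\theta_0}\norm{((P^\theta)^*-z)u}_2\le\epsilon_q+o(1)\) as \(\theta_0\to0\). We choose \(\theta_0\), depending on \(F_q\), so that this is at most \(2\epsilon_q\).

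Next we truncate to compact support. Taking \(\chi\) smooth with compact support inside \((0,\theta_0)\), the inverse Bloch transform gives \(\phi(x+\ell)=\int\chi(\theta)e^{\ii\theta(x+\ell)}u(x)\,\dd\theta/(2\pi)\), which decays rapidly in \(\ell\). Note that \(u\) is only in \(L^2\), but this decay holds in \(\ell^2(\ell;L^2(0,1))\). Let \(\zeta_R\) be a smooth cutoff equal to \(1\) on \([-R,R]\), supported in \([-2R,2R]\), with \(|\zeta_R'|\le C/R\). Then \(\norm{\phi-\zeta_R\phi}_2\to0\) as \(R\to\infty\), and the uniform bound \(\norm{\mathsf P_F^*}\le C_P(9/2)\) from \eqref{eq:uniform-line-bounds} controls the residual change:
\[
 \norm{(\mathsf P_F^*-z)\zeta_R\phi}_2\le\norm{(\mathsf P_F^*-z)\phi}_2+(C_P+|z|)\norm{(1-\zeta_R)\phi}_2 .
\]
So no commutator estimate is needed. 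Choose \(R\) with the tail at most \(\epsilon_q\) and renormalise; the result is still compactly supported.

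Finally we pass to arclength with \eqref{eq:physical-adjoint}. Put \(\varphi=c\,\zeta_R\phi/a_F\), with \(c\) normalising in \(L^2(\Gamma_F,\dd\sigma)\). Since \(1\le a_F\le\sqrt{1+(9/2)^2}\), the constant \(c\) is bounded above and below uniformly in \(q\), and
\[
 \bigl((D^*_{\Gamma_F}-z)\varphi\bigr)^\flat=c\,a_F^{-1}(\mathsf P_F^*-z)(\zeta_R\phi).
\]
Its \(\Gamma_F\)-norm is therefore bounded by a constant times \(\epsilon_q\to0\), which proves \eqref{eq:full-line-quasimodes}.

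The main obstacle is justifying the fibre identity for the adjoint together with the one-sided norm limit. Lemma~\ref{lem:floquet} supplies this, with adjoints taken fibrewise in the unweighted \(L^2(0,1)\), matching the unweighted adjoint of \(\mathsf P_F\). Its key point is that the limit is approached from one side only, \(\sigma=+1\), on which the rank-one correction kills mean-zero \(u\). Averaging phases on both sides of \(0\) would also work for this reason, but a one-sided packet avoids needing \(\theta=0\) itself.
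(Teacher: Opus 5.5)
Your proposal is correct and follows essentially the same route as the paper: a Floquet packet supported on phases just above $0$, where the rank-one jump in \eqref{eq:floquet-adjoint} annihilates the mean-zero $u_q$, followed by truncation using the uniform bound on $\mathsf P_F^*$ and a slope-controlled conversion to arclength. The differences are cosmetic---you use a smooth bump $\chi$ instead of the indicator $\one_{E_q}$, and you convert via \eqref{eq:physical-adjoint} after truncating in horizontal coordinates rather than conjugating each fibre by $M_{\sqrt{a_q}}$ (and, to cover a possible $\epsilon_q=0$, you should aim for bounds of the form $\epsilon_q+1/q$ rather than $2\epsilon_q$ and $\epsilon_q$).
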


\begin{proof}
For a fixed mean-zero \(u_q\), the rank-one term in
\eqref{eq:floquet-adjoint} vanishes. The one-sided norm limit
therefore provides an interval \(E_q\Subset(0,\pi)\), with phases
sufficiently close to zero, on which the unweighted adjoint residual
of \(u_q\) differs from its symmetric periodic residual by
an arbitrarily small amount. The interval and its distance
from zero may depend on the graph and the vector.

To pass to arclength, let \(a_q=(1+F_q'^2)^{1/2}\).
The unitary representation of the physical double layer on
unweighted horizontal space is
\(M_{\sqrt{a_q}}P^\theta M_{1/\sqrt{a_q}}\).
Consequently its adjoint applied to
\(u_q/\sqrt{a_q}\) is
\((P^\theta)^*u_q/\sqrt{a_q}\).
Use the direct-integral vector
\[
 \left(\frac{2\pi}{|E_q|}\right)^{1/2}
    \one_{E_q}(\theta)\frac{u_q}{\sqrt{a_q}},
\]
and normalise its inverse Floquet transform.
Since \(1\le a_q\le\sqrt{1+(9/2)^2}\), the norm conversion
and its inverse are uniform in \(q\). This proves
\eqref{eq:full-line-quasimodes}. In physical coordinates the
cell density is \(u_q/a_q\), as in the normalisation in
Section \ref{sec:operators}.

Finally, compactly supported functions are dense in physical
\(L^2(\Gamma_{F_q})\). The uniform graph-operator bound shows
that truncating each vector sufficiently far out changes both
its norm and its residual by an arbitrarily small amount.
Normalisation gives compactly supported vectors with the
same conclusion.
\end{proof}

\section{The periodic graph and the bounded domain}\label{sec:gluing}

We now place small copies of the compactly supported vectors on
disjoint pieces of a single boundary. Two choices are needed:
a sufficiently large segment of each graph to control the tail,
and a sufficiently small scale to fit these segments into the
bounded boundary.

\begin{lemma}\label{lem:gluing}
Suppose \(F_q:\R\to\R\) have Lipschitz constant at most \(L_0\),
and compactly supported densities
\(\varphi_q\in L^2(\Gamma_{F_q},\dd\sigma)\) satisfy
\(\norm{\varphi_q}_2=1\) and
\[
 \norm{(D_{\Gamma_{F_q}}^*-z)\varphi_q}_2\to0.
\]
Then there is a bounded simply connected Lipschitz domain with
boundary \(\Gamma\) and a weakly null sequence satisfying
\eqref{eq:desired-sequence}. Its top graph has Lipschitz constant
at most \(3L_0\) and is differentiable at the sole accumulation
point of the inserted pieces.
\end{lemma}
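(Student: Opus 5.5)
We build the domain \(\Omega=\{(x,y):-1<x<1,\ -H<y<f(x)\}\) with \(f\) supported on disjoint intervals \(I_q=[c_q-r_q,c_q+r_q]\) accumulating only at \(0\). On each \(I_q\) the graph is a rescaled, tapered copy of a long segment of \(\Gamma_{F_q}\).

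\textbf{Step 1: localisation on a single graph.} For each \(q\), let the horizontal support of \(\varphi_q\) lie in \([-\Lambda_q,\Lambda_q]\). Choose \(\Lambda'_q\gg\Lambda_q\). Define \(\widetilde F_q\) to agree with \(F_q-F_q(0)-F_q'(\text{avg})\) on \([-\Lambda'_q,\Lambda'_q]\). More simply, after subtracting an affine function so that the segment starts and ends near height \(0\), we taper it linearly to \(0\) on \([\Lambda'_q,2\Lambda'_q]\) and its mirror, and set it to \(0\) outside. The affine correction matters: \(F_q\) has tilt \(\pm1/2\), so subtracting \(m_qx\) and tapering over a length comparable to \(\Lambda_q'\) keeps the Lipschitz constant below \(3L_0\). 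Alternatively, taper multiplicatively with a cutoff \(\chi(x/\Lambda'_q)\) whose derivative is \(O(1/\Lambda'_q)\). Then \(|(\chi F)'|\le L_0+|F|/\Lambda'_q\le L_0+L_0\cdot 2=3L_0\), after normalising \(F_q(0)=0\).

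The key estimate is that \(\|(D^*_{\Gamma_{\widetilde F_q}}-D^*_{\Gamma_{F_q}})\varphi_q\|\to0\) as \(\Lambda'_q/\Lambda_q\to\infty\). On the part of the target graph within distance \(\Lambda'_q/2\) of the support, the kernels differ only through sources outside \([-\Lambda'_q,\Lambda'_q]\). Since the sources are the support of \(\varphi_q\), the difference is an operator on the modified target region. We split as follows. For targets \(X\) with \(|x|\le\Lambda'_q\), the kernel and the density agree, so the difference is zero. For targets with \(|x|>\Lambda'_q\), the kernel \(|X-Y|^{-1}\) with \(|Y|\le\Lambda_q\) has \(L^2\) size \(O(\Lambda_q^{1/2}\Lambda_q'^{-1/2})\) for each graph. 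The adjoint applied to \(\varphi_q\) is evaluated at targets, so both operators are \(O(\sqrt{\Lambda_q/\Lambda'_q})\) there by Cauchy–Schwarz, and this term tends to zero.

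\textbf{Step 2: scaling and insertion.} The double layer is dilation and translation invariant in \(L^2\) after the unitary rescaling \(\varphi\mapsto\lambda^{-1/2}\varphi(\cdot/\lambda)\). We place the scaled copy with \(\lambda_q=r_q/(2\Lambda'_q)\) centred at \(c_q\), with \(c_q\to0\) and \(r_q=o(c_q)\). For example, take \(r_q\le c_q/q\) and \(\lambda_q\Lambda'_q\) very small, which yields differentiability at \(0\) with \(f'(0)=0\), since \(|f|\le 3L_0 r_q\) on \(I_q\). The transplanted \(\psi_q\) live on the disjoint \(I_q\), so \(\psi_q\rightharpoonup0\).

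The residual on \(\Gamma\) compared with \(\Gamma_{\widetilde F_q}\) (scaled) comes from the other parts of \(\Gamma\): the other inserted pieces, the flat parts, and the sides and bottom. These parts lie outside the scaled copy of \([-2\Lambda'_q,2\Lambda'_q]\) apart from flat pieces, and flat pieces are already in \(\Gamma_{\widetilde F_q}\). The distant contributions are estimated by the same far-field Cauchy–Schwarz bound, and the far part of \(\Gamma_{\widetilde F_q}\) is removed similarly. The bound is \(O(\lambda_q^{1/2}\Lambda_q^{1/2}\cdot\text{dist}^{-1}\cdot|\Gamma|^{1/2})\) together with a \(\sqrt{\Lambda_q/\Lambda'_q}\) term. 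For nearby pieces \(I_{q\pm1}\), we need their distance to \(\operatorname{supp}\psi_q\) to be large relative to \(\lambda_q\Lambda_q\), which holds since \(r_q=o(c_q)\) while the support has size \(\lambda_q\Lambda_q\ll r_q\).

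\textbf{Main obstacle.} The hard part is making the far-field estimate uniform, that is, bounding \(\int_{\Gamma\setminus B}\bigl|\int k(X,Y)\psi_q(Y)\,d\sigma(Y)\bigr|^2\) where the adjoint kernel involves \(n(X)\). The kernel is \(O(|X-Y|^{-1})\) uniformly, so the bound reduces to \(\|\psi_q\|_1^2\int_{\mathrm{dist}>d}|X|^{-2}\,d\sigma\lesssim \operatorname{diam}(\operatorname{supp}\psi_q)/d\), using the Ahlfors regularity of \(\Gamma\) with constants depending on \(L_0\). Choosing the scales in order makes every error \(\le1/q\), and together with the Lipschitz bound \(3L_0\) this gives \eqref{eq:desired-sequence}.
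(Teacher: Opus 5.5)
Your proposal is correct and is essentially the paper's argument. You normalise $F_q(0)=0$ and cut off multiplicatively by a piecewise linear $\chi$; this is the paper's $G_q=\chi_{R_q}F_q$, with slope at most $3L_0$ since $|F_q(x)|\le L_0|x|$. You then rescale onto disjoint intervals with $r_q/c_q\to0$, note that the residual is unchanged at targets in the retained window, bound all remaining targets (other insertions, sides and bottom) by the $1/|X-Y|$ kernel estimate, and use disjoint supports for weak nullity.

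Discard your first tapering suggestion, however. Subtracting an affine function $m_qx$ is a shear rather than a rigid motion, so it changes the double-layer operator and would destroy the exact transfer of the residual; the multiplicative cutoff needs no such correction. You should also record that $f$ vanishes near $x=\pm1$ and that $H>\norm{f}_\infty$, since these make the domain Lipschitz and bi-Lipschitz to a rectangle.
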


\begin{proof}
Translate each graph vertically so that \(F_q(0)=0\), and choose \(s_q\ge1\)
so that \(\varphi_q\) is supported in \(|x|\le s_q\).
For the double layer and its physical adjoint, the kernel
has absolute value at most \(1/(2\pi|X-Y|)\).
On a graph with slope bounded by \(L_0\), it follows that
for \(R>2s_q\),
\begin{equation}\label{eq:graph-tail}
 \norm{\one_{\{|x|>R\}}D_{\Gamma_{F_q}}^*\varphi_q}_2
       \le C_{L_0}R^{-1/2}\norm{\varphi_q}_{L^1(\Gamma_{F_q})}.
\end{equation}
For source points in the support, \(|x-y|\ge|x|/2\).
Integrating \(x^{-2}\) over \(|x|>R\) proves the estimate.

We first truncate the graph outside a region much larger than the
support of the vector. Choose \(R_q>2s_q\) so that the right side of
\eqref{eq:graph-tail} tends to zero. Let \(\chi_R\) equal one
on \([-R,R]\), vanish outside \([-2R,2R]\), and be linear
on each intervening interval. Let \(G_q=\chi_{R_q}F_q\).
Since \(|F_q(x)|\le L_0|x|\), the function \(G_q\) has
Lipschitz constant at most \(3L_0\), vanishes at the two
endpoints of its support, and agrees with \(F_q\) on
\([-R_q,R_q]\).

We next choose the positions and scales of the truncated graphs.
Let \(c_q=2^{-q-3}\). Choose positive \(r_q\) so small that
the intervals \([c_q-r_q,c_q+r_q]\) are disjoint,
\(r_q/c_q\to0\), and, with \(\rho_q=r_q/(2R_q)\),
\begin{equation}\label{eq:side-smallness}
 \rho_q^{1/2}\norm{\varphi_q}_{L^1(\Gamma_{F_q})}\to0.
\end{equation}
By further decreasing the radii, arrange that all intervals lie in
\((0,1/4)\) and that all heights in the following construction
have absolute value less than one.
Define
\[
 f(x)=\sum_q\rho_qG_q((x-c_q)/\rho_q),\qquad -1\le x\le1.
\]
The supports are disjoint. Each summand has Lipschitz constant
at most \(3L_0\) and is zero at its support endpoints.
For points in different intervals, each height is bounded by
\(3L_0\) times the distance to the endpoint facing the other
interval. The sum of these distances is at most the distance
between the points. Thus \(\Lip(f)\le3L_0\); the same argument
applies when one or both points lie off the supports.
On the \(q\)th interval, \(|f|\le L_0r_q\), so
\(f(x)/|x|\to0\) as \(x\to0\). Set \(f(0)=0\).
Thus \(f'(0)=0\).

Choose \(H>1+\norm f_\infty\), and define
\[
 \Omega=\{(x,y):-1<x<1,\ -H<y<f(x)\}.
\]
This domain is bounded. Its boundary consists of the top graph,
two vertical sides, and the bottom segment. Since \(f=0\) near
\(x=\pm1\), its four corners are rectangle corners. Thus the
boundary is locally a Lipschitz graph everywhere. The map
\[
 (x,t)\longmapsto
 \left(x,-H+(t+H)\frac{H+f(x)}H\right)
\]
is a bi-Lipschitz homeomorphism of the closed rectangle
\([-1,1]\times[-H,0]\) onto \(\overline\Omega\).
In particular, \(\Omega\) is simply connected. The top normal
points upwards, as required by the graph convention.

Transfer \(\varphi_q\) to the central part of the corresponding graph piece
by
\[
 \psi_q(c_q+\rho_qu,\rho_qF_q(u))
       =\rho_q^{-1/2}\varphi_q(u,F_q(u))
       \quad (|u|\le s_q),
\]
and set it to zero elsewhere on \(\Gamma\).
Arclength scales by \(\rho_q\), so \(\norm{\psi_q}_2=1\).
We estimate the residual on three parts of the boundary. For
target points of the top graph with
\(|x-c_q|\le\rho_qR_q\), the graph and normal agree with the
scaled full graph. Homogeneity of the kernel identifies the residual
there with the scaled full-line residual, with the same norm bound.
For the rest of the top graph, let \(d=|x-c_q|>\rho_qR_q\).
Since \(R_q>2s_q\), the horizontal distance from the source
support is at least \(d/2\). The scaled source has \(L^1\) norm
\(\rho_q^{1/2}\norm{\varphi_q}_1\), so the kernel bound gives
\[
 |D_\Gamma^*\psi_q(x,f(x))|
       \le C\frac{\rho_q^{1/2}\norm{\varphi_q}_1}{d}.
\]
Arclength on the entire top graph is bounded by a fixed multiple
of \(\dd x\). Consequently
\begin{align*}
 \norm{\one_{\mathrm{remaining\ top}}D_\Gamma^*\psi_q}_2^2
 &\le C_{3L_0}\rho_q\norm{\varphi_q}_1^2
       \int_{|x-c_q|>\rho_qR_q}\frac{\dd x}{|x-c_q|^2}\\
 &\le \frac{C_{3L_0}}{R_q}\norm{\varphi_q}_1^2.
\end{align*}
The factors from scaling have cancelled, and the estimate includes
all the other inserted graph pieces. Finally, the two sides
and the bottom have a fixed positive distance from all source
supports. Their contribution is therefore bounded by
\(C\rho_q^{1/2}\norm{\varphi_q}_1\), which tends to zero
by \eqref{eq:side-smallness}. The scalar term \(-z\psi_q\)
vanishes off the source support. Together, these estimates give
the residual bound \eqref{eq:desired-sequence} on the whole boundary.

The supports of the \(\psi_q\) are pairwise disjoint. For any
\(g\in L^2(\Gamma)\),
\[
 \abs{\langle g,\psi_q\rangle}\le
 \norm{\one_{\supp\psi_q}g}_2\longrightarrow0.
\]
Hence \(\psi_q\rightharpoonup0\), completing the proof.
\end{proof}

\begin{proof}[Proofs of Theorems \ref{thm:main} and \ref{thm:periodic-graph}]
Fix \(t\in[1/2,1/2+\varepsilon)\), with \(\varepsilon\) as in
Lemma~\ref{lem:parameter-stability}, and set \(z=-\ii t\).
The two fixed smooth cells satisfy the hypotheses of
Proposition~\ref{prop:growth-criterion} at this parameter. Lemma
\ref{lem:full-line-packets} supplies compactly supported approximate eigenvectors in
the slope class \(L_0=9/2\). Lemma \ref{lem:gluing} produces
one bounded domain \(\Omega_t\) and the weakly null normalised sequence
\eqref{eq:desired-sequence}; its top slope is at most \(27/2\).

Here \(\Gamma=\partial\Omega_t\). If \(D_\Gamma^*+\ii t\) were
Fredholm, it would have a bounded
left parametrix modulo compact operators:
\(B(D_\Gamma^*+\ii t)=I+C\), with \(C\) compact.
Apply this identity to \(\psi_q\). The left side tends to zero
in norm and \(C\psi_q\to0\), contradicting
\(\norm{\psi_q}_2=1\). Thus \(D_\Gamma^*+\ii t\) is not
Fredholm. Taking adjoints shows that
\(D_\Gamma-\ii t\) is not Fredholm. Since the kernel is real,
complex conjugation gives the same conclusion for
\(D_\Gamma+\ii t\). This proves \eqref{eq:main-essential-point}.

The identity \(D_\Gamma1=-1/2\), established in
Section~\ref{sec:operators}, shows that \(L^2_0(\Gamma)\) is
invariant under \(D_\Gamma^*\).
Since \(\psi_q\rightharpoonup0\), subtracting
\(|\Gamma|^{-1}\int_\Gamma\psi_q\) changes it by a vector
tending to zero in norm. The resulting vectors are in
\(L^2_0(\Gamma)\), their norms tend to one, and their
restricted adjoint residuals still tend to zero. Thus
\(-\ii t\) lies in the spectrum of the restriction, whose
spectral radius is at least \(t\).

For Theorem~\ref{thm:periodic-graph}, let \(F\) be the
\(1\)-periodic extension of the top function \(f\) from
\([-1/2,1/2]\). The insertion intervals lie in \((0,1/4)\),
so \(f\) vanishes near both endpoints of this period. Hence
\(F\) is Lipschitz on \(\R\), with \(\Lip(F)\le27/2\).
On \(\Gamma_F\), define \(\widetilde\psi_q\) by the same scaled
density as \(\psi_q\) on the \(q\)th insertion in the central
period, and by zero everywhere else. In particular, the density
is zero on the other periods, and \(\norm{\widetilde\psi_q}_2=1\).

The normal points upwards. For \(|x-c_q|\le\rho_qR_q\), the
residual agrees with the scaled full-line residual, as in
Lemma~\ref{lem:gluing}. On the entire remaining graph, the same
kernel estimate gives
\begin{align*}
 \bigl\|\one_{\{|x-c_q|>\rho_qR_q\}}
          D_{\Gamma_F}^*\widetilde\psi_q\bigr\|_2^2
 &\le C\rho_q\norm{\varphi_q}_1^2
       \int_{|x-c_q|>\rho_qR_q}\frac{\dd x}{|x-c_q|^2}\\
 &\le\frac{C'}{R_q}\norm{\varphi_q}_1^2\longrightarrow0.
\end{align*}
This integral includes all other periods, and the scalar term
\(\ii t\widetilde\psi_q\) vanishes there. Thus
\(\norm{(D_{\Gamma_F}^*+\ii t)\widetilde\psi_q}_2\to0\).
The supports remain disjoint, so \(\widetilde\psi_q\rightharpoonup0\)
in \(L^2(\Gamma_F,\dd\sigma)\). The same Fredholm argument,
followed by adjunction and complex conjugation, proves
Theorem~\ref{thm:periodic-graph}.
\end{proof}

\subsection{Spectral symmetry on a closed boundary}

The real kernel gives conjugation symmetry of the spectrum and
essential spectrum in every dimension. For a planar Jordan boundary,
the Cauchy singular integral gives a second symmetry.

\begin{proposition}\label{prop:essential-symmetry}
Let \(\Omega\subset\R^2\) be a bounded simply connected Lipschitz
domain, with boundary \(\Gamma\). On \(L^2(\Gamma,\dd\sigma)\),
\[
 \essspec(D_\Gamma)=-\essspec(D_\Gamma)
                   =\overline{\essspec(D_\Gamma)}.
\]
\end{proposition}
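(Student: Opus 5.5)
The conjugation symmetry is immediate and I will dispose of it first. The kernel of \(D_\Gamma\) is real, so \(\overline{D_\Gamma f}=D_\Gamma\overline f\). If \(D_\Gamma-\lambda\) is Fredholm, then conjugating by the antiunitary map \(f\mapsto\overline f\) shows that \(D_\Gamma-\overline\lambda\) is Fredholm. Kernel and cokernel dimensions are preserved, and so are compact perturbations. Hence \(\essspec(D_\Gamma)=\overline{\essspec(D_\Gamma)}\).

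For the origin symmetry I plan to use the Cauchy integral on the Jordan curve \(\Gamma\). Identify \(\R^2\) with \(\C\) and orient \(\Gamma\) positively, with unit tangent \(\tau(\zeta)\). Let
\[
 \mathcal C f(\xi)=\frac1{\pi\ii}\pv\int_\Gamma\frac{f(\zeta)}{\zeta-\xi}\dd\zeta .
\]
By \cite{CMM}, \(\mathcal C\) is bounded on \(L^2(\Gamma,\dd\sigma)\). By the Plemelj relations it satisfies \(\mathcal C^2=I\); this uses the Lipschitz trace theory already cited in Section~\ref{sec:operators}, applied to the interior and exterior Cauchy integrals of a simply connected domain.

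Next I compute the real part. Write \(\dd\zeta=\tau\dd\sigma\) and \(n=-\ii\tau\). A direct calculation shows that for real \(f\), the quantity \(\operatorname{Re}\bigl[\frac{1}{\pi\ii}\frac{\tau(\zeta)}{\zeta-\xi}\bigr]\) equals \(2\) times the double-layer kernel \(\frac{(\xi-\zeta)\cdot n(\zeta)}{2\pi|\xi-\zeta|^2}\), up to the sign fixed by the convention \(D_\Gamma1=-1/2\). I expect the outcome to be, up to that sign,
\[
 \mathcal C=2D_\Gamma+\ii S,
\]
with \(S\) real and \(D_\Gamma\) extended complex-linearly. I will check the exact sign on the constant function \(1\), using \(\mathcal C1=1\) and \(D_\Gamma1=-1/2\). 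Here \(S\) is a real-kernel singular integral, namely the conjugate-harmonic (Hilbert-type) operator on \(\Gamma\).

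The key step is to exploit \(\mathcal C^2=I\) by splitting into real and imaginary parts. To keep everything complex-linear, I will use the real-linear structure only through the kernels: \(D_\Gamma\) and \(S\) have real kernels, so the operator identity \((2D_\Gamma+\ii S)^2=I\) can be separated by applying it to real functions. This gives
\[
 4D_\Gamma^2-S^2=I,\qquad D_\Gamma S+SD_\Gamma=0 .
\]
From the anticommutation, whenever \(S\) is invertible modulo compacts, \(D_\Gamma\) and \(-D_\Gamma\) are similar in the Calkin algebra, and so \(\essspec(D_\Gamma)=-\essspec(D_\Gamma)\).

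Invertibility of \(S\) modulo compacts is the main obstacle. The relation \(S^2=4D_\Gamma^2-I\) reduces it to Fredholmness of \(4D_\Gamma^2-I=(2D_\Gamma-I)(2D_\Gamma+I)\). This in turn follows from Verchota's invertibility of \(D_\Gamma\pm\tfrac12 I\) on \(L^2\), up to the rank-one issues on a connected boundary, which are harmless modulo compacts \cite{Verchota}.

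Then \(S\) is Fredholm. If \(B\) is a Calkin inverse of \(S\), we get \(SD_\Gamma B\equiv-D_\Gamma\) modulo compacts, and hence \(D_\Gamma\equiv -BD_\Gamma S\), an exact similarity in the Calkin algebra. Spectra are similarity invariant, which yields \(-\essspec(D_\Gamma)=\essspec(D_\Gamma)\). The remaining points to check carefully are the constant factors in \(\mathcal C=\pm2D_\Gamma+\ii S\) and the validity of \(\mathcal C^2=I\) in the \(L^2\) principal-value sense on Lipschitz curves.
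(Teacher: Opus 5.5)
Your proposal is correct and follows essentially the same route as the paper. Conjugation symmetry comes from the real kernel; splitting $\mathcal C^2=I$ with $\mathcal C=-2D_\Gamma+\ii S$ gives $4D_\Gamma^2-S^2=I$ and $D_\Gamma S=-SD_\Gamma$; Verchota's Fredholmness of $D_\Gamma\pm I/2$ makes $[S]$ invertible in the Calkin algebra, and the similarity $[S][D_\Gamma][S]^{-1}=-[D_\Gamma]$ finishes the proof. The differences are cosmetic: you separate real and imaginary parts by testing on real densities rather than conjugating by $f\mapsto\overline f$, the paper cites $\mathcal C^2=I$ from the literature rather than deriving it via Plemelj, and the sign you left open is indeed $-2D_\Gamma$, as your check with $\mathcal C1=1$ and $D_\Gamma1=-1/2$ confirms.
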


\begin{proof}
Identify \(\R^2\) with \(\C\), orient \(\Gamma\)
counterclockwise, and let \(\mathcal J f=\overline f\).
Since \(\mathcal J D_\Gamma\mathcal J=D_\Gamma\), conjugation
preserves Fredholmness and gives the symmetry about the real axis.
Consider the normalised Cauchy singular integral
\[
 C_\Gamma f(\zeta)=\frac{1}{\pi\ii}\pv\int_\Gamma
                      \frac{f(\eta)}{\eta-\zeta}\dd\eta.
\]
This is bounded on arclength \(L^2\), and \(C_\Gamma^2=I\)
\cite[Theorem~2.2(b)]{Lenells2018}. If \(\tau\) is the positively
oriented unit tangent, the outward normal is \(-\ii\tau\).
Taking the real part of the kernel therefore gives the bounded
complex-linear operators
\[
 A=\frac{C_\Gamma+\mathcal J C_\Gamma\mathcal J}{2}=-2D_\Gamma,
 \qquad
 B=\frac{C_\Gamma-\mathcal J C_\Gamma\mathcal J}{2\ii}.
\]
Both \((A+\ii B)^2=I\) and \((A-\ii B)^2=I\), whence
\[
 A^2-B^2=I,\qquad AB=-BA.
\]
The operators \(D_\Gamma\pm I/2\) are Fredholm
\cite{Verchota}; see also \cite[Section~2]{CHPV}.
If brackets denote images in the Calkin algebra, then
\[
 [B]^2=4[D_\Gamma]^2-[I]
       =4[D_\Gamma-I/2][D_\Gamma+I/2]
\]
is invertible. Thus \([B]\) is invertible, and
\([B][A][B]^{-1}=-[A]\). Similarity preserves the spectrum in
the Calkin algebra, proving symmetry about the origin.
\end{proof}

\appendix
\section{Certification of the local cell}
\label{cert:appendix}

We prove the local matrix inequality used in the graph construction by
certifying an approximate pair of functions. First, an approximation in
Hilbert--Schmidt norm gives an inverse bound for the continuous local
operator. We then integrate rational polynomials exactly to bound the
residuals of the proposed functions. The inverse bound controls their
distance from the true pair, and a final perturbation estimate transfers
the positive matrix inequality to that pair.

The graph operators in this appendix act on the unweighted space
$L^2(\T,\dd x)$, where $\T=\R/\mathbb Z$ has measure one. Their
adjoints and operator norms refer to this space, and $1$ denotes the
constant function. For finite matrices, $\|\cdot\|_{\HS}$ is the
Hilbert--Schmidt (Frobenius) norm.
For $F(x)=mx+h(x)$, with $h$ real and periodic, the line kernels are
\begin{align}
 p_F(x,y)&=\frac{F(x)-F(y)-(x-y)F'(y)}
 {2\pi\big((x-y)^2+(F(x)-F(y))^2\big)},\label{cert:raw-P}\\
 r_F(x,y)&=-\frac{x-y}
 {2\pi\big((x-y)^2+(F(x)-F(y))^2\big)}.\label{cert:raw-R}
\end{align}
Their symmetric zero-phase periodisations define the operators
$P=P_F$ and $R=R_F$, with periodic kernels $K_P$ and $K_R$.
We take symmetric principal values. Since the periodised operator $P$
annihilates constants, $P^*$ preserves the mean-zero subspace. The
resolvent on this subspace determines the pair: set
\begin{equation}
 z=-\ii/2,\qquad b=F'-m,\qquad
 G=(P^*|_{\ker W}-z)^{-1},\qquad Wf=\int_\T f\dd x,
 \label{cert:quotient-definition}
\end{equation}
when the inverse exists, and define
\begin{equation}
 \alpha=-GP^*1,\qquad \beta=Gb,\qquad
 M=\begin{pmatrix}1+\alpha&\beta\\R(1+\alpha)&1+R\beta\end{pmatrix}.
 \label{cert:M-definition}
\end{equation}
The interval $J$ in the following proposition is the subinterval on which
the graph construction continues at the next scale.

\begin{proposition}[Certified local inequality]
\label{cert:local-lemma}
For the graph specified in Subsection~\ref{cert:profile}, let
\[
 J=(5/64,43/64),\qquad
 K(Y)=\int_J M(v)^*Y M(v)\dd v.
\]
Set $K_-(Y)=\overline{K(\overline Y)}$ and $A=K\circ K_-$.
Then
\begin{equation}
 \|(P^*+\ii/2)^{-1}\|<40,\qquad \|R\|<10,
 \label{cert:local-inverse-and-R}
\end{equation}
and, for
\begin{equation}
 X=\begin{pmatrix}1/3&37\ii/200\\-37\ii/200&37/40\end{pmatrix},
 \qquad \frac14 I<X<I,
 \label{cert:witness}
\end{equation}
one has
\begin{equation}
 A(X)\ge \frac{21}{20}X+\frac{49}{4950}I.
 \label{cert:final-local-inequality}
\end{equation}
The inverse in \eqref{cert:local-inverse-and-R} acts on the full space.
The bound for this inverse uses the interval enclosures in
Subsection~\ref{cert:arithmetic-model}; the subsequent residual and
matrix calculations use integers and rationals.
\end{proposition}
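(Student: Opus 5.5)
The proof will be computer-assisted and run in three stages: an inverse bound, residual bounds for approximate data, and a perturbation step for the matrix inequality.

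\emph{Stage 1: the inverse bound.} First I would split the cell operator as in Lemma~\ref{lem:periodic-basic}. The kernel $K_P$ is smooth, since $F\in C^4$ by Subsection~\ref{cert:profile}. I would project it onto a finite Fourier (or Legendre-on-cells) basis of dimension $n$. This gives $P^*=P_n^*+E$, and the Hilbert--Schmidt norm of $E$ can be bounded rigorously using derivative bounds on $p_F=B\Phi(Q)/(2\pi)$. The series tails are controlled after subtracting the $1/j$ terms in \eqref{eq:periodic-leading-tails}.

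The compressed matrix $P_n^*+\ii/2$ is then inverted in interval arithmetic, and its smallest singular value $\sigma$ is enclosed. If $\norm{E}_{\HS}<\sigma$, then
\[
\norm{(P^*+\ii/2)^{-1}}\le(\sigma-\norm E_{\HS})^{-1},
\]
and I would aim for this to be below $40$.

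For $\norm R<10$, I would use \eqref{eq:hilbert-principal}. The principal part has norm at most $\tfrac12\sup\Phi\le1/2$, and $\mathcal R_F$ has a bounded divided-difference kernel whose sup norm I would enclose. Alternatively, the slope bound $7/2$ may already give this crudely.

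\emph{Stage 2: residuals for the approximate pair.} Next I take explicit mean-zero rational polynomials (piecewise on the cells) $\tilde\alpha,\tilde\beta$ and $\tilde\rho_1\approx R(1+\alpha)$, $\tilde\rho_2\approx R\beta$. I would compute the residuals
\[
(P^*+\ii/2)\tilde\alpha+P^*1,\qquad (P^*+\ii/2)\tilde\beta-b
\]
in $L^2$. The polynomial part is handled by exact rational quadrature, and the kernel remainder by the enclosures from Stage~1. Multiplying each residual by the inverse bound $40$ bounds $\norm{\alpha-\tilde\alpha}_2$ and $\norm{\beta-\tilde\beta}_2$.

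Then
\[
\norm{R(1+\alpha)-\tilde\rho_1}\le\norm R\,\norm{\alpha-\tilde\alpha}+\norm{R(1+\tilde\alpha)-\tilde\rho_1},
\]
and similarly for $\tilde\rho_2$. Collecting these gives $\norm{M-\widetilde M}_{L^2(\T;\HS)}<1/3000$.

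\emph{Stage 3: the matrix inequality.} With $\widetilde M$ polynomial, $\widetilde K(Y)$ is computed exactly over the rational interval $J$. Since $\overline{\widetilde M}$ gives $\widetilde K_-$, the composition $\widetilde A(X)$ is an exact rational Hermitian matrix. I would check positivity of $\widetilde A(X)-\tfrac{21}{20}X-\tfrac1{50}I$ by leading entry and determinant, and verify $\int_J\norm{\widetilde M}_{\HS}^2<4$ and $\widetilde K(I)<\tfrac72 I$ exactly.

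The Cauchy--Schwarz perturbation estimate from Lemma~\ref{lem:parameter-stability} then gives $\norm{K-\widetilde K}<1/700$ and $\norm{A-\widetilde A}<1/99$. Since $\norm X<1$, this yields the margin $1/50-1/99=49/4950$. Finally, $\tfrac14I<X<I$ is checked by determinants.

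\emph{Main obstacle.} I expect the main obstacle to be Stage~1. It requires rigorous, sufficiently sharp enclosure of the truncation error of the periodised singular kernel: the near-diagonal Taylor form and the far $1/j^2$ tails. The discretisation must also be fine enough that $\norm E_{\HS}$ stays well below $\sigma\approx1/40$. I would first try a piecewise-polynomial Galerkin basis adapted to the transition intervals of $w_0$, with interval bounds on $F^{(4)}$.
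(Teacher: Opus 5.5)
\textbf{Verdict: genuine gap in Stage~2.}

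\textbf{Stages 1 and 3.} Your Stage~3 is essentially the paper's argument. Stage~1 is a legitimate alternative to the paper's route, which does not invert a discretisation. The paper bounds $\|P^2\|<1/4$ using the $n=4096$ midpoint step kernel $B$: interval enclosures of $\|B\|_{\HS}$ and $\|B^2\|_{\HS}$, together with $\|P-B\|_{\HS}<93/1000$. It then factors $(P^*+\ii/2)^{-1}=(P^*-\ii/2)(P^{*2}+1/4)^{-1}$. This tolerates a discretisation error near $0.1$ and needs no verified singular-value enclosure.

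\textbf{The gap: required residual accuracy.} With $\|(P^*+\ii/2)^{-1}\|<40$ and $\|R\|<10$, adjoint residuals of size $e$ give errors $40e$ in $\alpha,\beta$ and about $400e$ in the lower row. So the chain to $\|M-\widetilde M\|_{L^2(\T;\HS)}<1/3000$ needs $e\lesssim6\cdot10^{-7}$, even if the $R$-residuals were exact.

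You propose to control ``the kernel remainder by the enclosures from Stage~1''. But those enclosures are built only to make $\|E\|_{\HS}$ smaller than $\sigma$. They therefore bound the residuals by roughly $\|E\|_{\HS}\|f\|_2$, with $\|E\|_{\HS}$ of order $10^{-3}$ to $10^{-2}$. That is several orders of magnitude too large, so the resulting bound on $\|M-\widetilde M\|$ is nowhere near $1/3000$.

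Exact rational quadrature does not fix this, because $P^*f$ and $Rf$ are not polynomials. Each image kernel is rational, there are infinitely many periodic images, and the $R$-residuals involve a principal value whose integrals against polynomials produce logarithms. For $R$ you have no approximation scheme at all, only a norm bound.

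\textbf{What is missing.} You need a separate, high-precision evaluation of $P^*f$ and $Rf$ for the two explicit inputs $f_0=1+\tilde\alpha$ and $f_1=\tilde\beta$, accurate to about $10^{-8}$ uniformly in the target. The paper does this as follows.
\begin{itemize}
\item It retains the images $-8\le j\le8$. On a $128\times128$ box partition, refined geometrically towards the four transition joins, it encloses each image kernel by an integer polynomial via exact polynomial division (Lemma~\ref{cert:NDS-lemma}). Only exact error bounds at most $10^{-9}$ are accepted.
\item The four corner supports straddling a join, where $F$ is only $C^4$ and no polynomial enclosure exists, are replaced by zero. The cost is controlled by $M_5\eta_g^4$ with $\eta_g=2^{-14}$.
\item The remote images are summed exactly, using exact moments of the source density and Euler--Maclaurin enclosures of $\sum_{j\ge9}j^{-s}$.
\item For $R$, the singular part $a(x)/(v-x)$ is separated on three neighbouring panels. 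Its principal value against polynomials is computed exactly by polynomial division. The resulting logarithms are enclosed by rational series, with a separate window estimate for the seam logarithms.
\end{itemize}
Only with the resulting residual bounds, below $7\cdot10^{-8}$ for $P^*$ and $8\cdot10^{-8}$ for $R$, does the chain to $\|M-\widetilde M\|<1/3000$ close.
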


\subsection{The exact graph and exact input functions}
\label{cert:profile}

We first specify the graph and the four functions to be certified.
Let $m=1/2$, $p=3/4$, and $\delta=1/16$, and define
\[
 \Theta(u)=\frac{35u-35u^3+21u^5-5u^7}{16},\qquad
 \Theta'(u)=\frac{35}{16}(1-u^2)^3\quad(-1\le u\le1).
\]
Let $w_0$ be the period-one rounded indicator of $[0,p]$: it is
$(1+\Theta(x/\delta))/2$ on the rising transition centred at zero,
$(1-\Theta((x-p)/\delta))/2$ on the falling transition centred at
$p$, and is otherwise one or zero.  Set
\begin{equation}
 F(0)=0,\qquad F'=7/2-4w_0,
 \qquad h=F-x/2.
 \label{cert:profile-definition}
\end{equation}
An explicit formula for the graph uses the polynomial
\[
 \mathcal P(t)=35t^2-4480t^4+458752t^6-20971520t^8.
\]
The exact polynomial pieces on $[0,1]$ are
\begin{equation}
 F(x)=
 \begin{cases}
  \frac32x-\mathcal P(x),
       &0\le x\le1/16,\\
  35/1024-x/2,&1/16\le x\le11/16,\\
  -157/512+\frac32(x-3/4)+\mathcal P(x-3/4),
       &11/16\le x\le13/16,\\
  \frac72x-3037/1024,&13/16\le x\le15/16,\\
  \frac12+\frac32(x-1)-\mathcal P(x-1),
       &15/16\le x\le1.
 \end{cases}
 \label{cert:profile-pieces}
\end{equation}
Extend by $F(x+1)=F(x)+1/2$. The pieces match through order four.
Across the periodic seam they are translates of one polynomial piece.
Only the four transition joins will therefore require separate estimates
when we enclose the kernel.

Since $\Theta$ is odd, increasing, and takes the endpoint values
$\pm1$, one has $0\le w_0\le1$ and $\int_\T w_0=p$.  Thus the mean
slope is $1/2$ and
\begin{equation}
 \|F'\|_\infty\le L:=7/2,\quad
 \|F''\|_\infty\le M_2:=70,\quad
 \|F'''\|_\infty\le M_3:=6720,\quad
 \osc h\le a:=3/2.
 \label{cert:geometry-bounds}
\end{equation}
Indeed $F''=\pm70(1-u^2)^3$ on a transition and vanishes elsewhere;
the third-derivative bound follows by differentiation and $|u|\le1$.
Also $h'=3-4w_0$ has absolute value at most three, so integration along
the shorter circular arc gives the oscillation bound.  On each
polynomial piece,
\begin{equation}
 |F^{(5)}|\le M_5:=55\,050\,240.
 \label{cert:M5}
\end{equation}
For example, three derivatives of $70(1-u^2)^3$, with $u=16x$ up to
translation and sign, are bounded by $70\cdot16^3(72+120)$.
At each transition join, $F''$, $F'''$, and $F''''$ vanish.  Consequently
the common fourth-order Taylor polynomial there is affine.
The interval $J$ lies in the plateau of slope $-1/2$, and each of its
endpoints is at distance $1/64$ from the adjacent transition endpoint.

The exact input \path{pair_rational_approximants.json} specifies integers $c_{l k j}^{\mathrm r}$ and
$c_{l k j}^{\mathrm i}$, $0\le l<64$, $0\le k<20$, $0\le j<4$,
and rational complex correction constants $\mu_j$.  In its JSON arrays, these are
\texttt{integers[$l$][$k$][$j$][0]},
\texttt{integers[$l$][$k$][$j$][1]}, and \texttt{means[$j$]};
the last entries give the real and imaginary parts as integer
numerator--denominator pairs.
The constants $\mu_0,\mu_1$ make the first two functions have
integral zero, while $\mu_2=\mu_3=0$. On $I_l=[l/64,(l+1)/64]$, set
\begin{equation}
 \varphi_j(x)=2^{-64}\sum_{k=0}^{19}
  (c_{l k j}^{\mathrm r}+\ii c_{l k j}^{\mathrm i})T_k(128x-2l-1)
  -\mu_j,
 \label{cert:data-definition}
\end{equation}
where $T_k$ is the Chebyshev polynomial. The four functions satisfy
$(\varphi_0,\varphi_1,\varphi_2,\varphi_3)=
(\widetilde\alpha,\widetilde\beta,\widetilde\gamma,\widetilde\delta)$.  These coefficients define exact piecewise polynomials. Continuity at the
panel endpoints and the two zero-mean conditions reduce to integer equalities.  Endpoint values use $T_k(1)=1$ and $T_k(-1)=(-1)^k$.
The mean uses
\begin{equation}
 \int_{-1}^1T_k(t)\dd t=
 \begin{cases}2/(1-k^2),&k\text{ even},\\0,&k\text{ odd}.
 \end{cases}
 \label{cert:Chebyshev-mean}
\end{equation}
These tests give continuity at all 64 joins, including the periodic
one, and $W\widetilde\alpha=W\widetilde\beta=0$ exactly.

The two operator inputs used throughout the calculation are
\begin{equation}
 f_0=1+\widetilde\alpha,\qquad f_1=\widetilde\beta.
 \label{cert:inputs}
\end{equation}
Using \eqref{cert:Chebyshev-mean} and the product identity for Chebyshev
polynomials, exact integration gives
\begin{equation}
 \|f_0\|_2^2<2,\qquad \|f_1\|_2^2<11.
 \label{cert:input-L2}
\end{equation}
Thus both $L^2$ and $L^1$ norms are less than four. We use this factor
to convert each subsequent kernel enclosure into an action error.

\subsection{Bounds for the continuous kernels}
\label{cert:kernel-bounds}

The inverse calculation requires uniform bounds for the kernel and
its first derivatives. Let $\Phi(t)=(1+t^2)^{-1}$ and take
$B_s=4\ge\|h'\|_\infty$. Choose the nearest lifted source so that
$r=x-y\in[-1/2,1/2]$. For the other source translates, let $d_j=r-j$,
$H=h(x)-h(y)$, and $b_y=h'(y)$. Their direct kernel is
\begin{equation}
 k_j(x,y)=\frac1{2\pi}
  \left(-\frac{b_y}{d_j}+\frac H{d_j^2}\right)
   \Phi\left(m+\frac H{d_j}\right).
 \label{cert:remote-P}
\end{equation}
We use $|\Phi|\le1$, $|\Phi'|\le1$, and the elementary estimates
\begin{equation}
 \left|\sum_{j\ge1}\left(\frac1{r-j}+\frac1{r+j}\right)\right|
 \le2,\qquad
 \sum_{j\ne0}|r-j|^{-k}\le
 \begin{cases}\pi^2,&k=2,\\20,&k=3,\\112/3,&k=4.
 \end{cases}
 \label{cert:remote-sums}
\end{equation}
The first inequality follows by comparison with
$\sum_{j\ge1}(j^2-1/4)^{-1}=2$.  The second uses the half-integer
square sum for $k=2$, and a first-term plus integral bound for $k=3,4$.

To control the local term through the diagonal, define
\[
 Q_r=\int_0^1F'(y+tr)\dd t,\qquad
 B_r=\int_0^1(1-t)F''(y+tr)\dd t.
\]
Then $k_0=B_r\Phi(Q_r)/(2\pi)$, including the diagonal, and
\begin{align*}
 |B_r|&\le M_2/2,& |\partial_x B_r|&\le M_3/6,
 &|\partial_y B_r|&\le M_3/3,\\
 |\partial_x Q_r|&\le M_2/2,
 &|\partial_y Q_r|&\le M_2/2.
\end{align*}
Separating the leading term $-b_y\Phi(m)/(2\pi d_j)$ in
\eqref{cert:remote-P} gives
\begin{equation}
 |K_P|\le \frac{M_2}{4\pi}+\frac{B_s}{\pi}
                    +\frac{a(1+B_s)\pi}{2}<19.
 \label{cert:kernel-value-bound}
\end{equation}
For the target derivative, the absolutely summable remote bound is
\[
 \frac1{2\pi}\left\{
 \frac{B_s^2+2B_s}{d_j^2}
 +\frac{2a(1+B_s)}{|d_j|^3}+\frac{a^2}{d_j^4}\right\}.
\]
For the source derivative, if
$g=-b_y/d_j+H/d_j^2$ and $q=m+H/d_j$, then
\[
 \partial_y g=-b_y'/d_j-2b_y/d_j^2+2H/d_j^3,
 \qquad \partial_y q=g.
\]
The only conditionally summable term in $\partial_y(g\Phi(q))$ is
$-b_y'\Phi(m)/d_j$.  Its paired sum is bounded by $M_2/\pi$ after
the factor $1/(2\pi)$.  It follows that
\begin{align}
 |\partial_x K_P|&\le
 \frac{M_3}{12\pi}+\frac{M_2^2}{8\pi}
 \notag\\
 &\quad+\frac{(B_s^2+2B_s)\pi^2+40a(1+B_s)+(112/3)a^2}{2\pi}
 <\frac{2969}{6}<500,\label{cert:target-derivative}\\
 |\partial_y K_P|&\le
 \frac{M_3}{6\pi}+\frac{M_2^2}{8\pi}+\frac{M_2}{\pi}
 \notag\\
 &\quad+\frac{(M_2a+2B_s+B_s^2)\pi^2+40a(1+B_s)+(112/3)a^2}{2\pi}
 <\frac{5279}{6}<900.
 \label{cert:source-derivative}
\end{align}
The rational comparisons use only $3<\pi<22/7$ and $\pi^2<10$.
These estimates apply to the full periodised kernel. Changing the
nearest translate reindexes the symmetric sum and introduces no jump
at the wrap seam. The removable representation and the $C^4$ matching
likewise exclude singular derivative terms at the diagonal and profile
joins. Hence the kernel is absolutely continuous on horizontal and
vertical segments, with the displayed derivative bounds.

To bound $R$, we separate its target Hilbert coefficient. With
$H_{\T} e_k=-\ii\operatorname{sgn}(k)e_k$, this gives
\begin{equation}
 R=-\tfrac12 M_{\Phi(F')}H_{\T}+E_R.
 \label{cert:R-Hilbert}
\end{equation}
The local remainder is bounded by $M_2/(4\pi)$, since
$|Q_r-F'(x)|\le M_2|r|/2$.  The change from the remote coefficient
$\Phi(m)$ to the target coefficient costs at most $1/\pi$ by the
paired sum; the remaining remote terms cost at most $a\pi/2$.
Schur's test therefore proves
\begin{equation}
 \|R\|\le\frac12+\frac{M_2}{4\pi}+\frac1\pi+\frac{a\pi}{2}
 <\frac12+\frac{35}{6}+\frac13+\frac{33}{14}<10.
 \label{cert:R-bound}
\end{equation}
The estimate applies in particular to $R1$ and includes every remote image.

\subsection{An inverse bound from the squared operator}
\label{cert:squared-inverse}

We obtain the inverse bound by estimating the square of the operator.
Let $n=4096$ and $x_i=(i+1/2)/n$, $0\le i<n$. Let $B$ be the exact
step-kernel operator obtained by sampling $K_P$ at these midpoints.
Its matrix on the space of functions constant on each grid interval,
in the orthonormal basis $\sqrt n\,1_{[j/n,(j+1)/n)}$, is
$K_P(x_i,x_j)/n$. We identify matrices in this basis with operators on
$L^2(\T)$ by extending them by zero on the orthogonal complement of
that space. Their Frobenius norms equal the Hilbert--Schmidt norms of
the corresponding operators.
Integrating the square of
$500|u|+900|v|$ over the midpoint offsets gives
\begin{equation}
 \|P-B\|_{\HS}^2\le\frac1{n^2}
 \left(\frac{500^2+900^2}{12}+\frac{500\cdot900}{8}\right)
 <\left(\frac{93}{1000}\right)^2.
 \label{cert:midpoint-error}
\end{equation}
The cross coefficient $1/8$ comes from integrating $2|u||v|$ on the
normalised square.

The arithmetic calculation below proves
\begin{equation}
 \|B\|_{\HS}<\frac{61}{100},\qquad
 \|B^2\|_{\HS}<\frac{95}{1000}.
 \label{cert:step-norms}
\end{equation}
Consequently
\begin{align}
 \|P\|&<\frac{703}{1000},\notag\\
 \|P^2\|&\le\|B^2\|+\|P-B\|(2\|B\|+\|P-B\|)
 <\frac{217109}{10^6}<\frac14.
 \label{cert:P-square}
\end{align}
The Neumann series for $P^{*2}+1/4$ therefore converges in the continuous
operator norm. Factorisation gives the full-space inverse and
\begin{equation}
 \|(P^*+\ii/2)^{-1}\|
 \le\frac{703/1000+1/2}{1/4-217109/10^6}
 =\frac{1203000}{32891}<40.
 \label{cert:inverse-40}
\end{equation}
Since $W(P^*+\ii/2)u=(\ii/2)Wu$, a mean-zero right-hand side has a
mean-zero solution. Restriction therefore gives the inverse in
\eqref{cert:quotient-definition}.

\subsection{Interval evaluation of the sampled operator}
\label{cert:arithmetic-model}

We establish \eqref{cert:step-norms} by interval arithmetic using Arb
within FLINT \cite{Johansson2017,FLINT2026}. Each real quantity is enclosed by
a ball $[a-\rho,a+\rho]$, and each complex quantity by a product of
two real balls. The arithmetic operations enclose their exact values,
including the errors from conversion and rounding. The calculation
uses 96-bit working precision through \texttt{python-flint}~0.9.0
with FLINT~3.6.0.

The midpoint coordinates and the values of $F,F',F''$ are first
formed as exact rational numbers. For the profile, with $v=u^2$, the
polynomials are
\begin{equation}
 \Theta(u)=\frac{u[35+v(-35+v(21-5v))]}{16},\qquad
 \mathcal I(u)=\frac{v[140+v(-70+v(28-5v))]}{128}.
 \label{cert:profile-Horner}
\end{equation}
Here $\mathcal I(u)=\int_0^u\Theta(t)\dd t$. The primitive of $w_0$
on the first rising segment is $(u+\mathcal I(u))/32$. The identities
$(1+\mathcal I(1))/2=221/256$ and
$(1-\mathcal I(1))/2=35/256$ give the matching constants on the
other four pieces. Thus $F=7x/2-4\int_0^xw_0$ and $F'=7/2-4w_0$;
the formula preceding \eqref{cert:M5} gives $F''$.

For $q>1$, the alternating sums
\[
 a_{q,N}=\sum_{k=0}^{N-1}\frac{(-1)^k}{(2k+1)q^{2k+1}}
\]
enclose $\arctan(1/q)$ between $a_{q,N}$ and $a_{q,N+1}$.
Machin's identity
$\pi=16\arctan(1/5)-4\arctan(1/239)$, with $N=32,10$
respectively, therefore gives rational endpoints $\pi_-<\pi<\pi_+$.
Their outward conversion gives the ball used for $\pi$.

Let $c=1+\ii/2$. For each pair of distinct grid points, choose the
integer $j$ such that $r=x-y-j\in[-1/2,1/2)$ and define
$\Delta=F(x)-F(y)-j/2$. The exact periodised bare Cauchy kernel is
\begin{equation}
 Z(x,y)=c^{-1}\cot\zeta
 =\frac1{\pi(r+\ii\Delta)}
     +\frac{\zeta T_0(\zeta^2)}{cS_0(\zeta^2)},
 \qquad \zeta=\frac{\pi(r+\ii\Delta)}c,
 \label{cert:cot-decomposition}
\end{equation}
where
\begin{equation}
 S_0(v)=\sum_{k\ge0}\frac{(-1)^kv^k}{(2k+1)!},\qquad
 T_0(v)=\sum_{k\ge1}\frac{(-1)^k2k\,v^{k-1}}{(2k+1)!}.
 \label{cert:entire-series}
\end{equation}
Indeed, $S_0(\zeta^2)=\sin\zeta/\zeta$ and
$\cot\zeta-1/\zeta=\zeta T_0(\zeta^2)/S_0(\zeta^2)$.
The corresponding real kernel is
\begin{equation}
 K_P(x,y)=\frac{\Delta-rF'(y)}{2\pi(r^2+\Delta^2)}
 -\frac12\operatorname{Im}\left[
   \frac{\zeta T_0(\zeta^2)}{cS_0(\zeta^2)}(1+\ii F'(y))\right].
 \label{cert:interval-kernel}
\end{equation}
On the diagonal the regular term vanishes. We use the removable value
$F''(x)/[4\pi(1+F'(x)^2)]$ directly.

The polynomial evaluations retain $k\le24$ in
\eqref{cert:entire-series}. At every off-diagonal sample, interval
arithmetic verifies $|\zeta|<6$. On this disk the omitted tails satisfy
\begin{align}
 e_S&\le\frac{6^{50}}{51!}
          \left(1-\frac{36}{52\cdot53}\right)^{-1}<2^{-85},\notag\\
 e_T&\le\frac{50\,6^{48}}{51!}
          \left(1-\frac{36}{50\cdot53}\right)^{-1}<2^{-85}.
 \label{cert:interval-series-tails}
\end{align}
These are exact rational comparisons. The second ratio includes the
factor $(k+1)/k$ in $T_0$. Adding $[-2^{-85},2^{-85}]$ to each real
and imaginary component of each polynomial enclosure therefore
encloses the entire function. Before division, the program verifies
that the enclosure of $S_0$ excludes zero and that the real local
denominator is positive. Consequently \eqref{cert:interval-kernel},
followed by division by $n$, encloses every entry of the exact matrix
$B$.

The interval matrix is multiplied in blocks of 128 rows: each row
block of $B$ is multiplied by the full matrix $B$. These products
enclose the corresponding rows of $B^2$. Repeated occurrences of the
same entry can enlarge the intervals but preserve inclusion of the
exact product. Squaring and summing the real entries gives interval
enclosures for $\|B\|_{\HS}^2$ and $\|B^2\|_{\HS}^2$. The 32
blocks cover all rows exactly once. Each subtotal has outward dyadic
endpoints, which are then summed by exact rational arithmetic.

The resulting enclosures imply
\begin{equation}
\begin{aligned}
 0.36879506825017661274
   &<\|B\|_{\HS}^2<0.36879506825017661275,\\
 0.00878723246832160633
   &<\|B^2\|_{\HS}^2<0.00878723246832160634.
\end{aligned}
\label{cert:actual-squared-sums}
\end{equation}
All decimal endpoints in \eqref{cert:actual-squared-sums} are exact
rationals chosen outwards from the dyadic enclosures. Their upper
endpoints are strictly below $(61/100)^2$ and $(95/1000)^2$,
respectively. This proves \eqref{cert:step-norms}, and hence
\eqref{cert:inverse-40}.

\subsection{Exact treatment of the infinite periodic tail}
\label{cert:tail}

The remaining operator calculations concern four residuals. The two
adjoint residuals are
\begin{equation}
 P^*f_0+\ii\widetilde\alpha/2,\qquad
 P^*f_1+\ii\widetilde\beta/2-(F'-1/2)
 \label{cert:Pstar-residual-definitions}
\end{equation}
and the two residuals for the auxiliary singular integral are
\begin{equation}
 \widetilde\gamma-R(1+\widetilde\alpha),\qquad
 \widetilde\delta-1-R\widetilde\beta.
 \label{cert:R-residual-definitions}
\end{equation}
The inverse bound converts the first two into errors in
$\widetilde\alpha,\widetilde\beta$. Together with \eqref{cert:R-bound},
the last two then control the second row of the pair. We estimate the
four actions by separating finitely many line images from their
infinite tail.

Throughout these calculations we use \emph{unwrapped} $x,y\in[0,1]$
and retain the line images $-8\le j\le8$. The complementary images
form the tail. Define
\[
 q(x)=\frac{x+\ii F(x)}{1+\ii/2},\qquad
 \eta=q(y)-q(x),\qquad S_s(8)=\sum_{j=9}^\infty j^{-s}.
\]
The geometry gives $|\eta|<5/2$.  With the convention
$Z(x,y)=c^{-1}\cot(\pi(q(x)-q(y)))$, the paired remote Cauchy sum is
\[
 Z_{\rm tail}(x,y)=\frac{2}{\pi c}
          \sum_{k\ge0}\eta^{2k+1}S_{2k+2}(8).
\]
For a complex input $f$ define the exact moments
\begin{equation}
 \mu_l(f)=\int_\T q(y)^l f(y)\dd y,\qquad
 \nu_l(f)=\int_\T\overline{q(y)}^{l}f(y)\dd y,
 \quad 0\le l\le17.
 \label{cert:tail-moments}
\end{equation}
The second family conjugates only the coordinate function, leaving
the density unchanged. Define
\begin{align*}
 W_f(x)&=\sum_{k=0}^8S_{2k+2}(8)
   \sum_{l=0}^{2k+1}\binom{2k+1}{l}(-q(x))^{2k+1-l}\mu_l(f),\\
 V_f(x)&=\sum_{k=0}^8S_{2k+2}(8)
   \sum_{l=0}^{2k+1}\binom{2k+1}{l}
       (-\overline{q(x)})^{2k+1-l}\nu_l(f).
\end{align*}
The retained polynomial tail actions are
\begin{equation}
 P^*_{\rm tail}f=\frac{q'W_f-\overline{q'}V_f}{2\pi\ii},
 \qquad
 R_{\rm tail}f=-\frac{W_f/c+V_f/\overline c}{2\pi}.
 \label{cert:tail-actions}
\end{equation}
The signs follow from the Cauchy sum, or equivalently from
$K_P(y,x)=\tfrac12\operatorname{Im}[Z(x,y)(1+\ii F'(x))]$ and
$K_R(x,y)=-\tfrac12\operatorname{Re}Z(x,y)$. These are real kernels acting
complex-linearly on the density, as in \eqref{cert:tail-actions}.

Every moment in \eqref{cert:tail-moments} is computed by exact polynomial
multiplication and integration.  In the local coordinate
$t=128y-2l-1$, $q$ has degree eight and $f$ has degree nineteen.
Consequently degree 155 suffices for their products.  If $d_q,d_f$ are
common coefficient denominators, the moment of power $a$ can be formed
with denominator
\[
 64d_f\,\operatorname{lcm}(1,\ldots,156)\,d_q^a.
\]
For an even local power $t^r$, its integral on one source panel is
$1/[64(r+1)]$; odd powers vanish.  These factors determine the
integer moment calculation; the derivative $q'$ is taken in physical
$x$, not in the normalised target variable.

To enclose the scalar tails $S_s(8)$, we use a rational
Euler--Maclaurin formula. Define the Bernoulli numbers by
$B_0=1$ and
$B_k=-(k+1)^{-1}\sum_{j<k}\binom{k+1}{j}B_j$.  With $N=64$, $p=16$,
let
\begin{align}
 C_s={}&\sum_{j=9}^{63}j^{-s}+\frac{64^{1-s}}{s-1}
       +\frac{64^{-s}}2
       +\sum_{r=1}^{16}\frac{B_{2r}(s)_{2r-1}}
                    {(2r)!\,64^{s+2r-1}},\label{cert:EM-center}\\
 E_s={}&\left(\sum_{j=0}^{32}\binom{32}{j}|B_j|\right)
       \frac{(s)_{31}}{32!\,64^{s+31}}.
 \label{cert:EM-error}
\end{align}
Here $(s)_k=s(s+1)\cdots(s+k-1)$.  The coefficient sum bounds the
periodic Bernoulli polynomial on $[0,1]$, and integration of the
absolute $32$nd derivative of $x^{-s}$ gives
\begin{equation}
 C_s-E_s\le S_s(8)\le C_s+E_s.
 \label{cert:EM-interval}
\end{equation}
For $s=2,4,\ldots,18$, we round the interval midpoint to a multiple
of $2^{-100}$ and bound the resulting error by its maximum distance
from the two rational endpoints.

The omitted odd powers begin at nineteen.  Using $|q'|<4$ and
$\pi>3$, their kernel error for either operator is bounded by
\begin{equation}
 e_{\rm geom}=\frac{4}{3}\,
        \frac{(5/16)^{19}}{19(1-25/324)}.
 \label{cert:tail-geometric-error}
\end{equation}
Indeed the denominator of the geometric expansion is bounded using
$j\ge9$, and
$\sum_{j>8}j^{-20}\le\int_8^\infty t^{-20}\dd t$.
If $e_{2k+2}$ is the interval/rounding error in its scalar coefficient,
the additional kernel error is at most
\begin{equation}
 e_{\rm coeff}=\frac43\sum_{k=0}^8(5/2)^{2k+1}e_{2k+2}.
 \label{cert:tail-coefficient-error}
\end{equation}
By \eqref{cert:input-L2}, multiplication of $e_{\rm geom}+e_{\rm coeff}$
by four gives the corresponding action error.

The exact binomial calculation initially gives target polynomials of
degree at most 143.  To store them, enclose $1/(2\pi)$ by the Machin
interval with 64 and 24 terms, and use a dyadic representative with
denominator $2^{120}$.  A highest-degree coefficient is discarded only
when the cumulative sum of discarded absolute real and imaginary
coefficients, divided by six times the cleared pre-$\pi$ denominator,
stays below $10^{-13}$.  Round the retained real and imaginary
coefficients to multiples of $2^{-80}$.  Charge the full discarded sum,
the $\pi$ interval radius times the full pre-$\pi$ coefficient norm,
and $(d+1)2^{-80}$ for a stored degree $d$.  The exact rational
comparisons give a stored degree at most twelve and, including
\eqref{cert:tail-geometric-error}--\eqref{cert:tail-coefficient-error},
\begin{equation}
 e_{\rm tail}<7.688\,10^{-11}
 \label{cert:tail-final-error}
\end{equation}
uniformly in the target, for each of the four actions on $f_0,f_1$.
This uniform action bound includes the input norm and is used directly
in the residual estimates.

\subsection{Finite-image polynomial enclosures}
\label{cert:box-enclosure}

To enclose the retained images, we refine near the four transition
joins. Let $\mathcal C=\{1/16,11/16,13/16,15/16\}$. The common target
and source partition has endpoints
\begin{equation}
 \{l/64:0\le l\le64\}
 \ \cup\ \{c\pm2^{-k}:c\in\mathcal C, 7\le k\le14\}.
 \label{cert:box-grid}
\end{equation}
The partition has 128 intervals, each contained in an original
polynomial panel. We treat their Cartesian square for each of the
seventeen retained images, giving
\begin{equation}
 128^2\cdot17=278528
 \label{cert:box-count}
\end{equation}
image boxes for each operator calculation.

For the adjoint image use
\begin{equation}
 r=y-x-j,\quad \Delta=F(y)-F(x)-j/2,\quad s=F'(x),\qquad
 K_{P^*,j}=\frac{\Delta-sr}{2\pi(r^2+\Delta^2)}.
 \label{cert:Pstar-image}
\end{equation}
If the two lifted polynomial pieces match, divide twice by the exact
linear polynomial $r$:
\[
 Q=\Delta/r,\qquad B=(Q-s)/r,
 \qquad K_{P^*,j}=B/[2\pi(1+Q^2)].
\]
We verify that both rational remainders vanish identically. The same
calculation treats the translated polynomial at the periodic seam.
When there is horizontal separation $\rho>0$, the unfactored denominator
instead has the lower bound $\rho^2$.

The only remaining boxes touching a transition join from opposite sides have
side $\eta_g=2^{-14}$.  There $|F''|\le M_5\eta_g^3/6$, by the common
affine fourth jet.  Therefore $|B|\le M_5\eta_g^3/12$ and the local
kernel is bounded by $M_5\eta_g^3/72$.  Replacing that local image by
zero gives operator norm at most $M_5\eta_g^4/72$: the four corner
supports are disjoint in both variables.  The computation charges
the slightly larger bound
\begin{equation}
 e_{P,\rm corner}=\eta_g
       \left(M_5\eta_g^3/72+M_5\eta_g^4/4\right).
 \label{cert:Pstar-corner}
\end{equation}
We make this replacement only on these four corner supports; all other
images and the periodic seam use their polynomial enclosures.

\begin{lemma}[Exact polynomial-division enclosure]
\label{cert:NDS-lemma}
On a normalised box $[-1,1]^2$, let $N,D$ be real rational polynomials
and let $n,d$ be the corresponding integer polynomials after clearing
one common denominator.  Suppose $d\ge d_*>0$ on the box.  For any
integer polynomial $s$, let $h=2^{80}n-ds$. Then
\begin{equation}
 \left|\frac{N}{2\pi D}-\frac{s}{2\pi\,2^{80}}\right|
 \le\frac{\sum_{a,b}|h_{ab}|}{6\,2^{80}d_*}.
 \label{cert:NDS-bound}
\end{equation}
\end{lemma}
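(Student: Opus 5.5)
The bound follows from one algebraic identity and one crude estimate. Write
\[
 \frac{N}{2\pi D}-\frac{s}{2\pi\,2^{80}}=\frac{n}{2\pi d}-\frac{s}{2\pi\,2^{80}}=\frac{2^{80}n-ds}{2\pi\,2^{80}d}=\frac{h}{2\pi\,2^{80}d}.
\]
The first equality holds because clearing one common denominator multiplies \(N\) and \(D\) by the same positive constant. We may take that constant positive: \(d\ge d_*>0\), so if the cleared denominator were negative we would flip the signs of both \(n\) and \(d\).

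The bound on \(|h|\) comes from the box itself. On \([-1,1]^2\) every monomial satisfies \(|u^av^b|\le1\), so
\[
 |h(u,v)|\le\sum_{a,b}|h_{ab}|.
\]
The denominator is bounded below by \(|d|\ge d_*\), which is the hypothesis. This gives
\[
 \left|\frac{N}{2\pi D}-\frac{s}{2\pi\,2^{80}}\right|\le\frac{\sum|h_{ab}|}{2\pi\,2^{80}d_*}.
\]
Finally \(\pi>3\) gives \(2\pi>6\), which yields \eqref{cert:NDS-bound}.

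The estimate itself poses no real obstacle; the substance lies in the hypotheses. The coefficients \(h_{ab}\) are integers computed exactly, so their sum is certified by exact arithmetic. The only genuinely nontrivial input is the lower bound \(d\ge d_*\) on the box. In the factored case after polynomial division this follows from \(2\pi(1+Q^2)\ge2\pi\), in cleared form. In the separated case it follows from the horizontal separation \(\rho^2\), together with \(r^2+\Delta^2\ge r^2\). The proof of the lemma simply assumes this bound as stated. I would also note that \(s\) is arbitrary, typically a rounded approximate quotient, so the lemma holds for every choice of \(s\).
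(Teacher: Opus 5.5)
Your proof is correct and is essentially the paper's argument: you write the difference as $h/(2\pi\,2^{80}d)$, bound $|h|$ on $[-1,1]^2$ by its absolute coefficient sum, and then use $d\ge d_*$ and $2\pi>6$. The remark about the sign of the clearing constant is unnecessary, since $N/D=n/d$ holds for any nonzero common factor and $d>0$ is already a hypothesis.
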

\begin{proof}
Multiply the difference by $2\pi\,2^{80}d$, use $2\pi>6$, and bound
the polynomial $h$ by the sum of its absolute coefficients on the box.
\end{proof}

We take the larger of the geometric lower bound, scaled to the integer
denominator, and the coefficient bound
$d_{00}-\sum_{(a,b)\ne(0,0)}|d_{ab}|$.  For a factored adjoint denominator the
geometric bound is one; for a separated image it is $\rho^2$ as above.

We construct the candidate $s$ by a Taylor recursion in total degree:
\[
 t_{ab}=\frac{n_{ab}-\sum_{(i,j)\ne(0,0)}d_{ij}t_{a-i,b-j}}
                   {d_{00}},\qquad
 s_{ab}=\operatorname{round}(2^{80}t_{ab}),
\]
Here negative subscripts give zero. We may use floating point to
propose the coefficients, since acceptance depends on computing the
complete integer residual $2^{80}n-ds$ exactly, including all degrees
beyond the candidate polynomial.  The tested degrees are
$2,4,6,8,12,16,24,32,40,48$.  Acceptance requires the exact rational
right-hand side of \eqref{cert:NDS-bound} to be at most $10^{-9}$.
If an enclosure fails, the algorithm can subdivide its source interval.
If a target interval is subdivided, the partial row is discarded and
both halves are recomputed. The complete calculation
\eqref{cert:box-count} required no further subdivision.

The source density is restricted by exact affine substitution.  If a
source interval has length $w$ and normalised coordinate $t$, then
\begin{equation}
 \int t^j\sum_k a_k t^k\dd y
       =w\sum_{j+k\ {\rm even}}\frac{a_k}{j+k+1}.
 \label{cert:source-moment}
\end{equation}
The Jacobian factor $w/2$ combines with the factor two from integration
of an even power on $[-1,1]$ to give the factor $w$.  The common integer moment denominator
uses $\operatorname{lcm}(1,\ldots,68)$, the input denominator, and the
dyadic interval length.  This covers kernel degree at most 48 and
input degree at most 19.  Contributions from different source widths
are aligned by exact integer shifts.  Thus \eqref{cert:source-moment} evaluates every source integral
exactly.

After accumulation, multiply by the dyadic representative of
$1/(2\pi)$ and charge its interval radius times the full target
coefficient norm.  This action error is less than $10^{-15}$ on every target interval.  For a fixed target, the source partition has total length
one.  Thus the seventeen uniform image errors contribute at most
\begin{equation}
 4\cdot17\cdot10^{-9}=6.8\,10^{-8}
 \label{cert:finite-action-error}
\end{equation}
to either input, independently of the number of rectangles.

\subsection{The two adjoint residuals}
\label{cert:pstar-residuals}

The finite-image and tail enclosures now give the two adjoint actions.
Add the tail polynomial to each finite-image action and form the
polynomial approximations to the residuals in
\eqref{cert:Pstar-residual-definitions} before taking norms.  For a complex polynomial
$p(t)=\sum c_k t^k$ on a target interval of length $w$,
\begin{equation}
 \int |p|^2\dd x
 =w\sum_{i+j\ {\rm even}}
       \frac{\operatorname{Re}(c_i\overline{c_j})}{i+j+1}.
 \label{cert:exact-poly-norm}
\end{equation}
After clearing the coefficient and integration denominators, this is
an integer calculation applied directly to the residual polynomial.
We also check that the completed target intervals cover $[0,1]$
without gaps or overlaps.

The exact resulting squared norms $s_0,s_1$ satisfy
\begin{equation}
 s_0<2.56\,10^{-20},\qquad s_1<5.52\,10^{-20},
 \qquad s_0,s_1<(3\,10^{-10})^2.
 \label{cert:Pstar-poly-comparison}
\end{equation}
All displayed terminating decimals in inequalities denote rationals.
The remaining analytic action error is
\begin{equation}
 e_P=4\cdot17\cdot10^{-9}+4e_{P,\rm corner}
                      +e_{\rm tail}+10^{-15}
       <6.812\,10^{-8}.
 \label{cert:Pstar-total-error}
\end{equation}
The triangle inequality and \eqref{cert:Pstar-poly-comparison} prove
\begin{equation}
 \|P^*(1+\widetilde\alpha)+\ii\widetilde\alpha/2\|_2<7\,10^{-8},
 \qquad
 \|P^*\widetilde\beta+\ii\widetilde\beta/2-b\|_2<7\,10^{-8}.
 \label{cert:Pstar-final}
\end{equation}

The executable test first verifies $0\le e_P<7\,10^{-8}$, then compares
$s_j$ with $(7\,10^{-8}-e_P)^2$. This applies the implication
$\sqrt{s}+e<t$ through the rational conditions $0\le e<t$ and
$0\le s<(t-e)^2$, retaining the required positive margin before squaring.

\subsection{The principal value for the auxiliary singular integral}
\label{cert:companion-pv}

It remains to estimate the two actions of $R$ in
\eqref{cert:R-residual-definitions}. We use the same unwrapped image
set and separate the singularity on three neighbouring panels. Fix an
original target panel $I_l=[a,b]$ of length $h_0=1/64$. For every target
in this panel, select the three lifted source panels
$I_{l-1},I_l,I_{l+1}$, whose union is $[a-h_0,b+h_0]$.
At the period seam a neighbour comes from image $j=1$ or $j=-1$.
This selection does not change when the target panel is subsequently
refined.  Every other source/image pair has horizontal separation at
least $h_0$.

With $r=v-x$, $\Delta=F(v)-F(x)$, and $s=F'(x)$, split only these
three near panels as
\begin{equation}
 K_{R,j}(x,v)=\frac{r}{2\pi(r^2+\Delta^2)}
       =\frac{a(x)}{v-x}+E_j(x,v),
 \qquad a(x)=\frac1{2\pi(1+s^2)}.
 \label{cert:companion-split}
\end{equation}
On matching polynomial pieces, exact divided differences give
\begin{equation}
 E_j=-\frac{B(Q+s)}{2\pi(1+Q^2)(1+s^2)},\qquad
 Q=\Delta/r,\quad B=(Q-s)/r.
 \label{cert:R-factored}
\end{equation}
On separated near pieces the equivalent expression is
\begin{equation}
 E_j=-\frac{(\Delta-sr)(\Delta+sr)}
                {2\pi r(r^2+\Delta^2)(1+s^2)}.
 \label{cert:R-separated}
\end{equation}
Multiply its numerator and denominator by the constant sign of $r$
on the box; the denominator is then positive and at least the cube
of the horizontal separation.  Formula \eqref{cert:R-factored} has
denominator at least one before the factor $2\pi$.  For all other
images use the original nonsingular $K_{R,j}$, with denominator lower
bound equal to squared separation.  Lemma~\ref{cert:NDS-lemma} therefore encloses all three rational forms.

Since
$|(Q+s)/((1+Q^2)(1+s^2))|\le1$, the common affine-jet argument at
a transition corner permits replacement of $E_j$ by zero there,
with operator error
\begin{equation}
 e_{R,\rm corner}=M_5\eta_g^4/72
                =35/3298534883328.
 \label{cert:R-corner}
\end{equation}
We retain the Hilbert coefficient $a(x)$ and integrate the smooth
image actions exactly by \eqref{cert:source-moment}. Adding the stored
$R$ tail once gives the complete error
\begin{equation}
 e_{R,\rm sm}=4\cdot17\cdot10^{-9}+4e_{R,\rm corner}
                         +e_{\rm tail}+10^{-15}
       <6.812\,10^{-8}.
 \label{cert:R-smooth-error}
\end{equation}

It remains to integrate the removed three-panel principal value.
For a polynomial $p$ on $[L,U]$, polynomial division gives the exact
identity
\begin{equation}
 \pv\int_L^U\frac{p(v)}{v-x}\dd v
  =\int_L^U\frac{p(v)-p(x)}{v-x}\dd v
     +p(x)\big(\log|U-x|-\log|L-x|\big).
 \label{cert:PV-polynomial}
\end{equation}
The first integrand is a polynomial.  In source coordinates
$t=128v-2k-1$, $z=128x-2k-1$, it is evaluated as
\begin{equation}
 \sum_{n\ge1}p_n\sum_{j=0}^{n-1}z^j
 \begin{cases}2/(n-j),&n-1-j\text{ even},\\0,&n-1-j\text{ odd}.
 \end{cases}
 \label{cert:PV-quotient-polynomial}
\end{equation}
The factors from the coordinate changes in the measure and denominator cancel.

Sum the three instances of \eqref{cert:PV-polynomial} before bounding
their logarithms.  At an internal seam $c=a$ or $c=b$, the coefficient
of $\log|c-x|$ is
\begin{equation}
 C_c(x)=p_{\rm left}(x)-p_{\rm right}(x).
 \label{cert:seam-coefficient}
\end{equation}
The two outer coefficients are $-p_{l-1}(x)$ and $p_{l+1}(x)$; their
distances from the target are at least $h_0$.  All four logarithmic
coefficients sum to zero identically.  Thus conversion between physical and normalised distances preserves
the constant-input term.  For constant density the formula is exactly
$\log((b+h_0-x)/(x-a+h_0))$.

The functions \eqref{cert:data-definition} are continuous piecewise
polynomials and hence globally Lipschitz. The three-panel principal-value
integrals therefore exist, and continuity gives $C_c(c)=0$ at every seam. To estimate the
seam contribution, we reconstruct
all nineteen higher Taylor coefficients exactly:
\[
 C_c(c+t)=\sum_{k=1}^{19}d_{c,k}t^k,\qquad
 \omega_c=\sum_{k=1}^{19}
      (|\operatorname{Re}d_{c,k}|+|\operatorname{Im}d_{c,k}|)\eta_s^k,
 \qquad \eta_s=1/4096.
\]
For $|t|\le\eta_s$, monotonicity of
$t^k\log(1/t)$ and $\log(1/\eta_s)<12$ imply
\begin{equation}
 |C_c(c+t)\log|t||\le12\omega_c.
 \label{cert:seam-pointwise}
\end{equation}
Only this combined seam logarithm is omitted in its window.
The windows are disjoint and have total measure $128\eta_s$ on the
circle, including the two halves at the periodic endpoint.  Since
$a(x)<1/6$, the squared action error is at most
\begin{equation}
 128\eta_s\left(\frac{12\max_c\omega_c}{6}\right)^2.
 \label{cert:seam-L2}
\end{equation}
Exact calculation for $f_0,f_1$, respectively, gives the upper bounds
\begin{equation}
 \max_c\omega_c<8.242\,10^{-9},\quad 1.8457\,10^{-8},
 \qquad
 e_{{\rm seam},0}^2<8.50\,10^{-18},\quad
 e_{{\rm seam},1}^2<4.26\,10^{-17}.
 \label{cert:seam-comparisons}
\end{equation}
We therefore allow $10^{-8}$ in $L^2$ for each input.

\subsection{Rational logarithms and the final residuals}
\label{cert:companion-residuals}

We enclose the remaining logarithms by rational series. For $r>0$,
exact dyadic scaling gives $r=2^e u$, $1\le u<2$. With
$t=(u-1)/(u+1)$,
\begin{equation}
 2\sum_{k=0}^{47}\frac{t^{2k+1}}{2k+1}
 \le\log u\le
 2\sum_{k=0}^{47}\frac{t^{2k+1}}{2k+1}
       +\frac{2t^{97}}{97(1-t^2)}.
 \label{cert:atanh-log}
\end{equation}
The same formula at $t=1/3$ encloses $\log2$.  Combining the endpoints
with the sign of $e$ gives a rational enclosure of $\log r$.

To approximate these logarithms uniformly, we refine the target
intervals after accumulating the smooth actions. Add endpoints at distances $2^{-7},\ldots,2^{-12}$ from every original
panel endpoint.  Together with \eqref{cert:box-grid}, this gives 784
intervals.  The three selected source panels remain fixed.  On each
interval every retained logarithm has distance
$d_0(1+\theta t)$ with $|t|\le1$, $|\theta|\le1/2$.  Use
\begin{equation}
 \log d_0+\sum_{k=1}^q\frac{(-1)^{k+1}\theta^kt^k}{k},
 \qquad
 |\mathrm{remainder}|\le
       \frac{|\theta|^{q+1}}{(q+1)(1-|\theta|)}.
 \label{cert:log-Taylor}
\end{equation}
The constant is enclosed by \eqref{cert:atanh-log} and rounded to
denominator $2^{120}$.  Choose $q$ so that the full endpoint-polynomial
coefficient norm times its log error is at most $10^{-9}/16$.
After all terms are summed, round each component of the complete
three-panel polynomial to denominator $2^{100}$, charging the sum of
all coefficient errors, including coefficients that round to zero.

Approximate $a(x)=1/[2\pi(1+F'(x)^2)]$ on the same intervals by the
one-variable instance of Lemma~\ref{cert:NDS-lemma}, with geometric
denominator lower bound one.  Enclose its factor $1/(2\pi)$ and the
coefficient rounding in the same manner.  The completed rational
comparisons give
\begin{equation}
 e_H<2\,10^{-10},\qquad e_a<2.25\,10^{-11},
 \label{cert:H-a-errors}
\end{equation}
uniformly on every target interval and for both inputs.  The largest
accepted logarithm degree is nineteen and the largest degree for $a$
is twelve.

To propagate the error in the scalar coefficient, we need a uniform
bound for the principal value. Denote the unmultiplied three-panel
integral by $H_{\rm near}f$. Subtracting $f(x)$ in the integrand gives
\begin{equation}
 |H_{\rm near}f|\le\frac3{64}\|f'\|_\infty
                           +\|f\|_\infty\log2.
 \label{cert:H-near-bound}
\end{equation}
Exact power-coefficient bounds on the original panels give
\[
 \|f_0\|_\infty<15/4,\quad\|f_0'\|_\infty<62,
 \qquad \|f_1\|_\infty<33/4,\quad\|f_1'\|_\infty<214.
\]
These estimates follow from sums of absolute coefficients on each panel.  Hence \eqref{cert:H-near-bound} is less than nineteen for
both inputs.  At most one seam term is omitted on an interval, its
unmultiplied size is less than $10^{-6}$ by
\eqref{cert:seam-pointwise}--\eqref{cert:seam-comparisons}, and the
polynomial error is less than $10^{-9}$.  The computed near-Hilbert
polynomial is therefore bounded in modulus by twenty.  The $a$
approximation costs at most $20e_a$, and the $H$ approximation at
most $e_H/6$.

Restrict the smooth target action and tail exactly to these final
intervals. Add the product of the $a$ and near-Hilbert polynomials,
and form polynomial approximations to the two residuals in
\eqref{cert:R-residual-definitions}.
After forming the residuals, round the coefficients to denominator
$2^{100}$. Their full coefficient-error sum is less than $10^{-20}$ on
each interval and is included in the total error.  Applying
\eqref{cert:exact-poly-norm} gives exact squared polynomial norms
\begin{equation}
 t_0<6\,10^{-20},\qquad t_1<1.52\,10^{-19},
 \qquad t_0,t_1<(4\,10^{-10})^2.
 \label{cert:R-poly-comparison}
\end{equation}
The total analytic error is at most
\begin{equation}
 e_R=e_{R,\rm sm}+10^{-8}+e_H/6+20e_a+10^{-20}
       <7.861\,10^{-8}.
 \label{cert:R-total-error}
\end{equation}
Combining \eqref{cert:R-poly-comparison} and \eqref{cert:R-total-error}
proves
\begin{equation}
 \|\widetilde\gamma-R(1+\widetilde\alpha)\|_2<8\,10^{-8},\qquad
 \|\widetilde\delta-1-R\widetilde\beta\|_2<8\,10^{-8}.
 \label{cert:R-final}
\end{equation}
Together with \eqref{cert:Pstar-final}, this establishes the four
residual bounds used in Subsection~\ref{cert:actual-margin}.

\subsection{Exact moments and the positive matrix test}
\label{cert:moments}

With the residual bounds established, we now verify the positive matrix
inequality for the approximate pair by computing its Gram matrix.

Define
\[
 \widetilde M=\begin{pmatrix}v_1&v_2\\v_3&v_4\end{pmatrix}
 =\begin{pmatrix}1+\widetilde\alpha&\widetilde\beta\\
                  \widetilde\gamma&\widetilde\delta\end{pmatrix},
 \qquad g_{ab}=\int_J\overline{v_a}v_b\dd x.
\]
The interval $J$ is the union of original panels 5 through 42.  On each such
panel, let $t_l=128x-2l-1$. Then
\begin{equation}
 \int_{I_l}T_j(t_l)T_k(t_l)\dd x
 =\begin{cases}
 \displaystyle\frac1{128}\left(
 \frac1{1-(j+k)^2}+\frac1{1-(j-k)^2}\right),&j+k\text{ even},\\
 0,&j+k\text{ odd}.
 \end{cases}
 \label{cert:Chebyshev-Gram}
\end{equation}
Let $d_{\mathrm{coef}}$ be a common positive integer denominator of the real and
imaginary parts of all polynomial coefficients. Each real and
imaginary part of $g_{ab}$ is an integer divided by
\[
 128d_{\mathrm{coef}}^2\,\operatorname{lcm}\{|1-k^2|:k=0,2,\ldots,38\}.
\]
We thereby obtain the complex Hermitian Gram matrix exactly.

For a Hermitian input
$Y=\bigl(\begin{smallmatrix}a&c+\ii d\\c-\ii d&b\end{smallmatrix}\bigr)$,
the resulting map $\widetilde K(Y)=\int_J\widetilde M^*Y\widetilde M$
is determined by
\begin{align*}
 (\widetilde K(Y))_{11}
   &=a g_{11}+b g_{33}+2c\operatorname{Re}g_{13}
                         -2d\operatorname{Im}g_{13},\\
 (\widetilde K(Y))_{22}
   &=a g_{22}+b g_{44}+2c\operatorname{Re}g_{24}
                         -2d\operatorname{Im}g_{24},\\
 (\widetilde K(Y))_{12}
   &=a g_{12}+b g_{34}+c(g_{14}+g_{32})
                         +\ii d(g_{14}-g_{32}).
\end{align*}
Let $\mathbf K$ be the real $4\times4$ matrix representing
$\widetilde K$ in the coordinates $(a,b,c,d)$ on $\Herm_2$.
Complex conjugation is represented by
$C=\operatorname{diag}(1,1,1,-1)$. Define
$\widetilde K_-(Y)=\overline{\widetilde K(\overline Y)}$
and $\widetilde A=\widetilde K\circ\widetilde K_-$.
Their coordinate matrices are $C\mathbf K C$ and
$\mathbf K C\mathbf K C$, respectively.

The exact rational tests obtained from these formulas are
\begin{equation}
 \int_J\|\widetilde M\|_{\HS}^2<4,\qquad
 (7/2)I-\widetilde K(I)>0,\qquad
 \widetilde A(X)-(21/20)X-(1/50)I>0.
 \label{cert:exact-moment-tests}
\end{equation}
We verify positive definiteness using the first leading entry and
determinant. For the middle matrix these exceed $11/4$
and $1/4$, respectively.  Those of the last matrix exceed
$119/10000$ and $1/2000$, respectively. The determinants of
$X-I/4$ and $I-X$ are $881/40000$ and $631/40000$, and their first
entries are positive, proving \eqref{cert:witness}.

\subsection{Perturbation of the pair and the positive map}
\label{cert:actual-margin}

We now transfer the matrix inequality to the true pair. It is enough
to weaken \eqref{cert:Pstar-final} to $5\,10^{-7}$ and
\eqref{cert:R-final} to $5\,10^{-6}$. The functions in
\eqref{cert:M-definition} then satisfy
\[
 \|\alpha-\widetilde\alpha\|_2,
 \|\beta-\widetilde\beta\|_2<20\,10^{-6},
\]
by \eqref{cert:inverse-40}.  Applying \eqref{cert:R-bound} to those
errors gives the two lower-row errors at most $205\,10^{-6}$ each.
Consequently
\begin{equation}
 e:=\|M-\widetilde M\|_{L^2(\T;\HS)}
 \le\sqrt{2\cdot20^2+2\cdot205^2}\,10^{-6}<1/3000.
 \label{cert:pair-error}
\end{equation}
The reflected cell has $P_-=-P$, $R_-=R$, and, at $z=-\ii/2$,
$M_-=\overline M$.  Hence the same bound applies to its pair map.

For Hermitian $Y$ of operator norm at most one, Cauchy--Schwarz and
\eqref{cert:exact-moment-tests} give
\begin{equation}
 \|K-\widetilde K\|
 \le e\bigl(2\|\widetilde M\|_{L^2(J;\HS)}+e\bigr)
 <\frac{4+1/3000}{3000}<\frac1{700}.
 \label{cert:one-step-error}
\end{equation}
A positive real-linear map on Hermitian matrices, equipped with the
operator norm, has norm $\|K(I)\|$: use $-I\le Y\le I$.
Thus $\|\widetilde K\|<7/2$ and
$\|K\|<7/2+1/700$, and similarly for the reflected maps.
It follows that
\begin{equation}
 \|A-\widetilde A\|
 <\frac{7+1/700}{700}<\frac1{99}.
 \label{cert:two-step-error}
\end{equation}
Together with $\|X\|<1$ and \eqref{cert:exact-moment-tests}, this gives
\[
 A(X)-(21/20)X>\left(\frac1{50}-\frac1{99}\right)I
                 =\frac{49}{4950}I.
\]
This proves Proposition~\ref{cert:local-lemma}.

\subsection{Certificate files and verification}
\label{cert:reproduction}

The computational files are available in the \path{certificate/}
subdirectory of the repository
\[
 \text{\certificaterepository}.
\]
All filenames below are relative to that subdirectory.
The following files contain the exact input and the computed
polynomials and scalar bounds used above. Each computed data file has
a generating script with the same basename.
\begin{description}
\item[Exact inputs]
Coefficients in \eqref{cert:data-definition}:
\path{pair_rational_approximants.json}.
\item[Coarse inverse]
\path{periodic_squared_resolvent_certificate.json} contains the
dyadic enclosures and block subtotals that imply
\eqref{cert:actual-squared-sums}.
\item[Remote images]
\path{pair_exact_periodic_tail.json} contains all stored tail polynomials.
\item[Adjoint residuals]
\path{pair_pstar_residual_certificate.json} contains all target
residual polynomials.
\item[Smooth terms]
The smooth actions and their tail are in the file
\par\noindent
\path{pair_companion_smooth_certificate.json}.
\item[Principal-value assembly]
\path{pair_companion_pv_certificate.json} contains all final
residual polynomials for the auxiliary singular integral.
\item[Matrix moments]
\path{pair_exact_moments.json} contains the coordinate matrices of
$\widetilde K$ and $\widetilde A$, in the fields \texttt{K} and
\texttt{A}, together with the rational matrix comparisons.
\end{description}
The shared exact arithmetic is in \path{exact_arithmetic.py},
and \path{verify.py} performs the
checks described below. The implementation uses arbitrary-precision Python
integers and the standard \texttt{Fraction} class. The coarse matrix
calculation uses Arb; its exact profile, series coefficients and scalar
estimates are in \path{coarse_inverse_model.py}. NumPy supplies the
floating-point polynomial proposals for the finite-image calculations.
The finite-image polynomial algorithm is implemented in
\texttt{pair\_finite\_kernel\_boxes.py}. Integer/Fraction arithmetic
clears the denominators and verifies each polynomial enclosure before
source integration. Before either combined certificate is formed,
the sorted target rows are checked to cover the 128 intervals of
\eqref{cert:box-grid} without gaps or overlaps. All 278528 image boxes
passed for each smooth operator calculation; the principal-value
calculation then used the 784 intervals described above.

The consolidated verifier checks the coarse interval subtotals, their
coverage and the exact comparisons in \eqref{cert:actual-squared-sums}.
It recomputes the exact input moments, every stored squared residual
norm, the total analytic errors, and the final positive matrix
inequalities. Its \texttt{--replay} option also rebuilds
all 784 principal-value, coefficient, and final assembly rows from the
saved smooth actions, without a preparation cache, and compares the exact
coefficients. It integrates all 1568 final complex polynomials using a
separate integer Gram summation. The \texttt{--coarse-replay} option
recomputes all sampled kernel enclosures and the interval matrix square.
The tail polynomials and two-dimensional kernel enclosures are regenerated
by their respective scripts. The continuous estimates follow by combining these
finite checks with the kernel identities and the tail, corner and
perturbation bounds above.

From the repository root, \texttt{python -B certificate/verify.py} runs
the stored-data checks; adding both \texttt{--replay} and
\texttt{--coarse-replay} runs the one-dimensional reconstruction and
the full Arb calculation as well. The \path{README.md} in the certificate
directory gives the dependencies and full regeneration commands.

\begingroup
\renewcommand{\addcontentsline}[3]{}
\section*{Acknowledgements}
The authors thank Simon Chandler-Wilde for helpful feedback. We
gratefully acknowledge the David Crighton Fund for providing a fellowship
supporting SS's visit to MJC in Cambridge. MJC also thanks the Isaac Newton
Institute for Mathematical Sciences, Cambridge, for its hospitality and
support during the programme \emph{Geometric spectral theory and applications}.
This work was supported by EPSRC grant EP/Z000580/1.

\section*{Declaration of AI assistance}
The authors used OpenAI's ChatGPT 5.6 and ChatGPT 6 to help develop and
refine the proof ideas and to assist with the implementation and debugging
of the computational code. The resulting mathematical arguments were
further developed and independently verified by the authors, who also
reviewed and checked all AI-assisted code incorporated into the final
proof. The computer-assisted steps are justified by the rigorous validation
procedures described in this paper, independently of the AI systems used
during development. The authors accept full responsibility for the
mathematical results, computations, code, and manuscript.
\par
\endgroup

\raggedbottom
\bibliographystyle{amsplain}
\bibliography{references}
\end{document}